\newtheorem{Theorem}{Theorem}[section]
\newtheorem{Lemma}[Theorem]{Lemma}
\newtheorem{Corollary}[Theorem]{Corollary}
\newtheorem{Definition}[Theorem]{Definition}
\newtheorem{corollary}[Theorem]{Corollary}
\newtheorem{definition}[Theorem]{Definition}
\theoremstyle{definition}
\newcommand{\R}{\mathbb{R}}
\newcommand{\N}{\mathbb{N}}
\newcommand{\C}{\mathbb{C}}
\newcommand{\var}{\varepsilon}
\DeclareMathOperator{\supp}{supp}
\begin{document}
\title{New results in analysis of Orlicz-Lorentz spaces}
\author[Bernal]{Luis Bernal-Gonz\'alez}
\address[L. Bernal-Gonz\'alez]{\mbox{}\newline \indent Departamento de An\'{a}lisis Matem\'{a}tico \newline \indent Facultad de Matem\'{a}ticas \newline \indent
	Instituto de Matem\'aticas Antonio de Castro Brzezicki \newline \indent
	Universidad de Sevilla \newline \indent
	Avenida Reina Mercedes, Sevilla, 41012, Spain.}
\email{lbernal@us.es}
\author[Rodriguez]{Daniel L. Rodr\'iguez-Vidanes}
\address[D.L.~Rodr\'iguez-Vidanes]{\mbox{}\newline \indent Instituto de Matem\'atica Interdisciplinar (IMI)\newline \indent Departamento de An\'{a}lisis Matem\'{a}tico y Matem\'atica Aplicada \newline \indent
	Facultad de Ciencias Matem\'{a}ticas\newline \indent
	Plaza de Ciencias 3 \newline \indent
	Universidad Complutense de Madrid \newline \indent
	Madrid, 28040, Spain.}
\email{dl.rodriguez.vidanes@ucm.es}
\author[Seoane]{Juan B.~Seoane-Sep\'{u}lveda}
\address[J.B.~Seoane-Sep\'{u}lveda]{\mbox{}\newline \indent Instituto de Matem\'atica Interdisciplinar (IMI)\newline \indent Departamento de An\'{a}lisis Matem\'{a}tico y Matem\'atica Aplicada \newline \indent
	Facultad de Ciencias Matem\'{a}ticas\newline \indent
	Plaza de Ciencias 3 \newline \indent
	Universidad Complutense de Madrid \newline \indent
	Madrid, 28040, Spain.}
\email{jseoane@mat.ucm.es}
\author[Tag]{Hyung-Joon Tag}
\address[Hyung-Joon Tag]{\mbox{}\newline \indent Department of Mathematics Education
	\newline \indent Dongguk University \newline \indent
	1 gil Pildong-ro, Jung-gu \newline \indent
	Seoul, 04620, Republic of Korea.}
\email{hjtag4@gmail.com}
\subjclass[2020]{Primary 46E30. Secondary 15A03, 46B87.}   
\keywords{Orlicz-Lorentz space, lineability, spaceability, disjointly strictly singular}
\thanks{
The research of the first author has been supported by the Plan Andaluz de
Investigaci\'on de la Junta de Andaluc\'{\i}a FQM-127 and by FEDER grant US-1380969.
The second author has been supported by Grant PGC2018-097286-B-I00 and by the Spanish Ministry of Science, Innovation and Universities and the European Social Fund through a “Contrato Predoctoral para la Formación de Doctores, 2019” (PRE2019-089135).
}
\maketitle
\begin{abstract}
In this article, we investigate the existence of closed vector subspaces (i.e.~spaceability) in various nonlinear subsets of Orlicz-Lorentz spaces \,$\Lambda_{\varphi,w}$\, equipped with the Luxemburg norm. If a family of Orlicz functions \,$(\varphi_n)_{n=1}^{\infty}$\, sa\-tis\-fies certain order relations with respect to a given Orlicz function \,$\varphi$, the subset of the order-continuous subspace $(\Lambda_{\varphi,w})_a$ whose elements do not belong to \,$\bigcup_{n=1}^{\infty}\Lambda_{\varphi_n,w}$\, is spaceable, and even maximal-spaceable when $\varphi$ satisfies the $\Delta_2$-condition. We also show that this subset is either residual or empty. In addition, sufficient conditions for this subset not being $(\alpha, \beta)$-spaceable are provided. A similar analysis is also performed on the subset $\Lambda_{\varphi,w} \setminus (\Lambda_{\varphi,w})_a$ when $\varphi$ does not satisfy the $\Delta_2$-condition.

The comparison between different Orlicz-Lorentz spaces is characterized via the generating pairs \,$(\varphi,w)$. For a fixed Orlicz function that satisfies the $\Delta_2^{\infty}$-condition, we provide a characterization of disjointly strictly singular inclusion operators between Orlicz-Lorentz spaces with different weights. As a consequence, there are certain subsets of Orlicz-Lorentz spaces on $[0,1]$ for which the lineability problem is not valid. Moreover, various types of $(\alpha,\beta)$-lineability and pointwise lineability properties on other nonlinear subsets of Orlicz-Lorentz spaces are examined. These results extend a number of previously known results in Orlicz and Lorentz spaces.

	
\end{abstract}

\section{Introduction}

The study of lineability, a term introduced by V.\,I\,.~Gurariy \cite{AGS,Tesis}, concerns the existence of vector spaces as well as other algebraic structures in sets that may not form a vector space.
For a deeper understanding on this topic, we refer the interested reader to \cite{ABPS,TAMS2014,BAMS2014,Studia2017,TAMS2020,BAMS2019}.
Let us introduce the basic notion (see \cite{ABPS}).


\begin{Definition}
Let \,$X$ \,be a vector space over a field, \,$A \subset X$ and \,$\alpha$ a cardinal number.
The set \,$A$ \,is called \,\emph{$\alpha$-lineable} provided that there is
a vector subspace \,$M \subset X$\, of dimension $\alpha$ \,such that \,$M \subset A \cup \{0\}$.
Moreover, if $\alpha=\dim (X)$, where \,$\dim(X)$ denotes the algebraic dimension of $X$, then we say that $A$ is \emph{maximal-lineable}.
\end{Definition}

In this article, we focus on Orlicz-Lorentz spaces.
This class is a natural generalization of both Orlicz and Lorentz spaces and includes the Lebesgue spaces \,$L_{p},\,  1 \leq p < \infty$.
Here, the support for these spaces will be an interval \,$[0,\gamma ) \subset \R$, where \,$0 < \gamma \le \infty$.
As a matter of fact, large {\it closed} vector spaces can be extracted from certain nonlinear subsets of appropriate Orlicz-Lorentz spaces.
We formally present the corresponding concept (see \cite{ABPS}):


\begin{Definition}
Let \,$X$ \,be a topological vector space over a field, \,$A \subset X$ and let \,$\alpha$\, be a cardinal number.
The set \,$A$ \,is called:
	\begin{enumerate}[\rm(i)]
		\item \emph{spaceable} provided that there is an infinite-dimensional closed vector subspace \,$M$ \,of \,$X$ \,such that \,$M \subset A \cup \{0\}$.
		\item \emph{$\alpha$-spaceable} provided that there is an $\alpha$-dimensional closed vector subspace \,$M$ \,of \,$X$ \,such that \,$M \subset A \cup \{0\}$.
		\item \emph{maximal-spaceable} provided that \,$M$ is $\dim (X)$-spaceable.
		\item \emph{$\alpha$-dense-lineable} provided that there is an $\alpha$-dimensional dense vector subspace \,$M$ \,of \,$X$ \,such that \,$M \subset A \cup \{0\}$.
		\item \emph{maximal dense-lineable} provided that \,$M$ is $\dim (X)$-dense-lineable.
	\end{enumerate}
\end{Definition}

In 2012, it was proved by Botelho {\it et al.}~\cite{BFPS} that, under the natural topology on \,$L_p[0,1]$, the set \,$L_p[0,1] \setminus \bigcup_{q > p} L_q[0,1]$ \,is spaceable for every \,$p > 0$. Also, under suitable conditions on the corresponding measure space \,$(\Omega , \Sigma , \mu )$, the first author and Ord\'o\~{n}ez \cite{BO} provided characterizations for the spaceability of the families \,$L_p(\Omega ) \setminus \bigcup_{q \in [1,p)} L_q(\Omega )$ $(p > 1)$, $L_p(\Omega ) \setminus \bigcup_{q \in (p,\infty )} L_q(\Omega )$ $(p \ge 1)$ \,and
 \,$L_p(\Omega ) \setminus \bigcup_{q \in [1,\infty) \setminus \{p\}} L_q(\Omega )$ $(p > 1)$ \,(see also \cite{BO2}).
 Moreover, the $(\alpha,\beta)$-spaceability of the latter families has been thoroughly studied in \cite{ABRR2}.

\vskip 3pt

The assertions in the preceding paragraph have been extended in several directions. On the one hand, in 2014, Ruiz and S\'anchez \cite[Proposition 5.2]{RS} obtained for the Orlicz spaces \,$L_\varphi$ \,on \,$[0,1]$ \,(where $\varphi$ is an Orlicz function, see Section 2) that \,$L_\varphi \setminus \bigcup_{n=1}^\infty L_{\varphi_n}$ \,is spaceable under the natural topology of \,$L^\varphi$, where \,$(\varphi_n)_{n=1}^\infty$ \,is a sequence of Orlicz functions such that each inclusion \,$L_{\varphi_n} \hookrightarrow L_\varphi$ \,is continuous and \,$L_{\varphi_n} \not = L^\varphi$ \,for every \,$n \in \N$.

\vskip 3pt

On the other hand, Akbarbaglu and Maghsoudi \cite[Theorems~3.1 and~3.2]{AM} were able to provide, for rather general measure spaces \,$(\Omega , \Sigma , \mu )$ \,enjoying appropriate natural properties, sufficient conditions for the spaceability of \,$M^\varphi (\Omega ) \setminus \bigcup_{n \ge 1} L_{\varphi_n}(\Omega )$, where
\,$\varphi , \varphi_n$ \,are Orlicz functions \,and \,$M^\varphi (\Omega )$ \,stands for the order-continuous subspace of \,$L_\varphi(\Omega )$ (see Section 2 for their definitions).

\vskip 3pt

Our aim in this paper is to extend the spaceability results of the last two paragraphs to a class of spaces containing both Lorentz and Orlicz spaces, namely, the family of Orlicz-Lorentz spaces on the real interval $[0, \gamma)$, where $\gamma \leq \infty$. Furthermore, the recently introduced notions of $(\alpha,\beta)$-lineable, $(\alpha,\beta)$-spaceable and pointwise $\alpha$-lineable will also be studied in the context of Orlicz-Lorentz spaces.

This article is organized as follows. Section~2 is devoted to settle the main concepts and tools in order to establish the desired spaceability assertions. In Section~3, inclusion relations between different Orlicz-Lorentz spaces are characterized via the generating pairs \,$(\varphi,w)$ (Theorems~\ref{Thm Orl-Lor inclusion-2} and~\ref{Thm Orl-Lor inclusion}). Moreover, for a fixed Orlicz function $\varphi$ and two different weights, we provide a characterization of inclusion operators between these spaces being disjointly strictly singular (Theorem \ref{Thm OL DSS characterization}). This consequently shows that certain subsets of Orlicz-Lorentz spaces are empty (Theorem~\ref{Thm empty difference}), and so the lineability problem for those subsets is not valid. In Section~4, we examine various notions of spaceability and lineability on certain nonlinear subsets of Orlicz-Lorentz spaces. If a family of Orlicz functions $(\varphi_n)_{n=1}^{\infty}$ satisfies certain order relations with respect to $\varphi$, then the subset of order-continuous elements in $\Lambda_{\varphi,w}$ that are not contained in $\Lambda_{\varphi_n,w}$ for every $n \in \mathbb{N}$ is spaceable, and even maximal-spaceable if $\varphi$ additionally satisfies the appropriate $\Delta_2$-condition (Theorems~\ref{th:OLspaceable1}, \ref{th:OLspaceable2} and \ref{co:OLspaceablemix}). These subsets are always either residual or empty (Theorem~\ref{th:OLresidual}). We will also provide sufficient conditions for the same subsets being not $(\alpha, \beta)$-spaceable (Corollaries~\ref{co:OLspaceable1}, \ref{co:OLspaceable2} and \ref{co:OLspaceable3}).
If $\varphi$ does not satisfy the appropriate $\Delta_2$-condition, then the complement of the order-continuous subspace in $\Lambda_{\varphi,w}$ is spaceable and contains an isometric copy of $\ell_{\infty}$ (Theorem \ref{Thm isomorphic copy}). Finally, under certain assumptions, we examine various lineability and spaceability properties on the set $\Lambda_{\varphi,w} \setminus \bigcup_{n=1}^{\infty}\Lambda_{\varphi,w_n}$ when $\varphi$ satisfies the appropriate $\Delta_2$-condition (Theorem~\ref{thm:OLspaceabilityspan1}) and when the family $(\varphi_n)_{n=1}^{\infty}$ satisfies certain order relations with respect to $\varphi$ (Theorem \ref{thm:OLspaceabilityspan2} and Corollary \ref{cor:OLspaceabilityspan2}).



\section{Preliminaries}

\subsection{Orlicz-Lorentz spaces}
Let \,$\Omega = (\Omega, \Sigma, \mu)$ \,be a measure space. The set of all real $\mu$-measurable functions on $\Omega$ modulo a.e.-convergence is denoted by \,$L_0$
\,(so two measurable functions are considered to be equal if they coincide outside of a $\mu$-null set). Recall that a function $\Omega \to \R$ is called $\mu$-measurable if it is
the $\mu$-a.e.~pointwise limit of a sequence of simple functions supported on $\mu$-finite sets; 
see, e.g.,~\cite{HNVW} for definitions and basic properties.

\vskip 3pt

Here we only consider $\Omega = I= [0, \gamma)$, $\gamma \leq \infty$, and the $\sigma$-algebra $\Sigma$ of Lebesgue measurable subsets in $I$ with the Lebesgue measure $\mu = m$
(so a function $f$ being $\mu$-measurable is equi\-va\-lent to being Lebesgue measurable, in other words, $f^{-1}(J)$ is a Lebesgue measurable set for each interval $J$). A Banach space $(X, \|\cdot\|) \subset L_0$ is said to be a {\it Banach function space} (more precisely, a {\it solid Banach space}\hskip 1pt) if $|f| \leq |g|$ implies $\|f\|_X \leq \|g\|_X$.
A Banach function space $X$ is said to have the {\it Fatou property} if every sequence $(f_n)_{n=1}^\infty \subset X$ such that $|f_n| \uparrow |f|$ with $\sup_n\|f_n\|_X < \infty$, we have $f \in X$ and $\|f_n\|_X \rightarrow \|f\|_X$.
A function $f \in X$ is said to be {\it order-continuous} if $\|f_n\|_X\downarrow 0$ for every sequence $(f_n)_{n=1}^{\infty}$ of measurable functions such that $|f_n| \leq |f|$ for every $n \in \mathbb{N}$ and
$|f_n| \downarrow 0$ $\mu$-a.e. The space of order-continuous elements in $X$ is denoted by $X_a$.
We denote the closure of the set of all simple functions with supports having finite measure by $X_b$. In general, it is well-known that \,$X_a \subset X_b$ \cite[Theorem I.3.11]{BS}. A closed linear subspace $Y$ of a Banach function space $X$ is an {\it order-ideal} \,if $f \in Y$ and $|g| \leq |f|$ $\mu$-a.e. implies $g \in Y$. The order-continuous subspace of a Banach function space is an order-ideal \cite[Theorem I.3.8]{BS}.

\vskip 3pt

For $\lambda > 0$ and $f \in L_0$, the {\it distribution function} \,$d_f(\lambda)$ \,is defined by $\,d_f(\lambda) = \mu(\{t \in \Omega : |f(t)| > \lambda\})$. We say that $f, g \in L_0$ are {\it equimeasurable} if $d_f(\lambda) = d_g(\lambda)$ for all $\lambda > 0$. The {\it decreasing rearrangement} \,$f^*$ \,of \,$f \in L_0$, is the generalized inverse of \,$d_f(\lambda)$ \,defined by \,$f^*(t) = \inf\{\lambda > 0: d_f(\lambda) \leq t\}$ for $t \geq 0$. A function \,$f \in L_0$ \,and its decreasing rearrangement \,$f^*$ \,are known to be equimeasurable. A Banach function space is \,{\it rearrangement invariant} \,whenever \,$\|f\|_X = \|g\|_X$ \,for equimeasurable functions \,$f, g \in X$. Orlicz-Lorentz function spaces are rearrangement invariant Banach function spaces with the Fatou property. For more details on Banach function spaces, we refer to \cite{BS, LT2}.

\vskip 3pt

An {\it Orlicz function} (also called {\it Young function}) $\varphi: [0,\infty ) \rightarrow [0,\infty )$ is a convex function such that \,$\varphi(0) = 0 = \lim_{u \to 0} \varphi (u)$ \,and $\varphi(u) > 0$ for every $u > 0$. Note that an Orlicz function is strictly increasing, continuous, and unbounded.
In the theory of Orlicz spaces, the following growth condition on Orlicz functions and the orders between Orlicz functions play important roles.
	
\begin{Definition}
	${}$
		\begin{enumerate}[\rm(i)]
			\item An Orlicz function \,$\varphi$ \,{\rm satisfies the $\Delta_2^{\infty}$-condition {\rm (}resp.~$\Delta_2^0$-con\-di\-tion{\rm )}} if there exists \,$K > 2$ and $u_0 \geq 0$
			\,such that \,$\varphi(2u) \leq K\varphi(u)$ for every $u \geq u_0$ {\rm(resp.}~$u \leq u_0${\rm)}. If $\varphi$ satisfies both $\Delta_2^{\infty}$-condition and $\Delta_2^{0}$-condition, then we say that $\varphi$ has the {\rm $\Delta_2$-condition}.
			\item For two Orlicz functions \,$\varphi$ \,and \,$\psi$, the function \,$\psi$ \,{\rm satisfies the $\Delta_\varphi(\infty)$-condition} {\rm (resp.~the $\Delta_\varphi(0)$-condition)} if for every \,$b > 0$ \,there exists \,$u_0$ \,such that \,$\varphi(u) \leq \psi(b u)$ \,for every \,$u \geq u_0$
			{\rm (}resp.~$0 \leq u \leq u_0${\rm ).}
			\item For two Orlicz functions \,$\varphi$ \,and \,$\psi$, we denote the order \,$\varphi \prec \psi$ \,{\rm (}resp.~$\varphi \prec_{\infty} \psi${\rm )}
\,if there exists \,$b > 0$
\,{\rm (}resp.~$b > 0$ \,and \,$u_0 \geq 0${\rm )} \,such that \,$\varphi(u) \leq \psi(bu)$ \,for every \,$u \geq 0$ {\rm (}resp.~$u \geq u_0${\rm ).}
		\end{enumerate}
\end{Definition}

It is easy to see from the definitions that if \,$\psi$ \,satisfies the $\Delta_{\varphi}(\infty)$-condition, then $\varphi \prec_{\infty} \psi$.
Also, we mention that an Orlicz function $\varphi$ satisfying the appropriate $\Delta_2$-condition characterizes the separability of an Orlicz-Lorentz space \cite{K}.
	
In this article, the following equivalent statement of the $\Delta_2$-conditions will be used.
	

\begin{Lemma} \cite[Theorem 1.13] {C}  \label{le:C}
		An Orlicz function $\varphi$ satisfies the $\Delta_2$-
		{\rm (}resp.~$\Delta_2^\infty$-{\rm )} condition if and only if there exist \,$l>1$ \,and \,$K>1$
\,{\rm (}resp.~$l>1$, $K>1$,  $u_0\ge 0${\rm )} \,such that \,$\varphi(lu) \leq K \varphi(u)$ \,for all \,$u\ge 0$
\,{\rm (}resp.~$u\ge u_0${\rm )}.
	\end{Lemma}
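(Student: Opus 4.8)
The plan is to establish both biconditionals by treating each implication separately, observing that the substance lies in only some of them. Throughout I use that an Orlicz function is nondecreasing and continuous with $\varphi(0)=0$ and $\varphi(u)>0$ for $u>0$, as recalled above. The forward implication of the $\Delta_2^\infty$-equivalence is immediate: if $\varphi(2u)\le K\varphi(u)$ for $u\ge u_0$ with $K>2$, then the pair $l=2$, the same $K$ (which exceeds $1$) and the same $u_0$ already witness the stated inequality.

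The reverse implications rest on a single iteration argument. Suppose $\varphi(lu)\le K\varphi(u)$ holds, with $l>1$ and $K>1$, on the relevant range of $u$. Applying the hypothesis repeatedly yields $\varphi(l^{n}u)\le K^{n}\varphi(u)$ for every $n\in\N$; in the $\Delta_2^\infty$ setting one must only check that the iteration stays above the threshold, which it does because $l>1$ forces $lu\ge u\ge u_0$ whenever $u\ge u_0$. Choosing the least $n$ with $l^{n}\ge 2$, namely $n=\lceil \log 2/\log l\rceil$, and using monotonicity of $\varphi$ gives $\varphi(2u)\le\varphi(l^{n}u)\le K^{n}\varphi(u)$. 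Replacing $K^{n}$ by $\max\{K^{n},3\}$ if necessary produces a constant strictly larger than $2$, which is the form required by the Definition. In the global case the resulting inequality holds for all $u\ge 0$, hence yields both the $\Delta_2^\infty$- and the $\Delta_2^0$-condition at once.

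The one genuinely nontrivial point is the forward implication for the global $\Delta_2$-condition, where from the two separate one-sided conditions one must manufacture a single inequality valid for all $u\ge 0$. Here $\Delta_2^0$ supplies a constant $K_0$ and a threshold $b>0$ with $\varphi(2u)\le K_0\varphi(u)$ for $0\le u\le b$, while $\Delta_2^\infty$ supplies $K_\infty$ and a threshold $a$ with $\varphi(2u)\le K_\infty\varphi(u)$ for $u\ge a$. If $a\le b$ these two ranges already cover $[0,\infty)$ and we finish by taking $K=\max\{K_0,K_\infty\}$ with $l=2$. Otherwise the interval $[b,a]$ is a compact subset of $(0,\infty)$, on which $u\mapsto \varphi(2u)/\varphi(u)$ is continuous and finite because $\varphi$ is continuous and strictly positive away from $0$; letting $K_{\mathrm{mid}}$ be its maximum there and setting $K=\max\{K_0,K_\infty,K_{\mathrm{mid}}\}$ yields $\varphi(2u)\le K\varphi(u)$ for every $u\ge 0$.

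I expect this bridging of the middle range to be the main obstacle, since it is the only step that uses more than the formal manipulation of the inequality: it relies on continuity, positivity, and compactness to control the ratio $\varphi(2u)/\varphi(u)$ on the gap between the two thresholds. Everything else is bookkeeping---iterating the inequality, selecting $n$ so that $l^{n}\ge 2$, and enlarging the final constant past $2$ to match the Definition---and convexity of $\varphi$ enters only through the monotonicity and continuity it guarantees.
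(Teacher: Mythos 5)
Your proof is correct and complete; note that the paper itself gives no argument for this lemma --- it is quoted from Chen's monograph \cite{C} --- so there is nothing internal to compare against, and your write-up (iterate $\varphi(lu)\le K\varphi(u)$ to reach $\varphi(l^nu)\le K^n\varphi(u)$ with $l^n\ge 2$, and bridge the middle range $[b,a]$ by compactness and positivity of $\varphi$ away from $0$) is exactly the standard proof. The only point worth flagging is that you read the $\Delta_2^0$-condition as supplying a threshold $b>0$, whereas the paper's Definition literally writes ``$u_0\ge 0$''; with $u_0=0$ the $\Delta_2^0$-condition would be vacuous and the forward implication of the global equivalence would fail (e.g.\ for an Orlicz function behaving like $e^{-1/u}$ near the origin), so your reading --- which is the standard one and the one under which the cited theorem holds --- is the right one, but it silently corrects the paper's definition rather than following it verbatim. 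Everything else (taking $l=2$ in the forward directions, enlarging the final constant past $2$ to match the Definition, and observing that the global inequality yields both one-sided conditions simultaneously) is sound bookkeeping.
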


	

A {\it weight function} \,$w: I \rightarrow \R$ \,is a decreasing, positive, locally integrable function. We denote \,$W(t) := \int_0^t w(s) \,ds$. Of course, $W(0) = 0$.
We assume that \,$W(\infty) = \int_0^{\infty} w(s) \,ds = \infty$ \,in the case \,$I = [0,\infty )$. The {\it modular} corresponding to $(\varphi ,w)$ is the function $\rho_{\varphi,w}: L_0 \rightarrow [0, \infty]$ defined by
\[
\rho_{\varphi,w}(f) = \int_I \varphi(f^*) w = \int_I \varphi(f^*(t))w(t) \,dt.
\]
From the fact that $(f^*)^* = f^*$, we see that $\rho_{\varphi,w}(f) = \rho_{\varphi,w}(f^*)$ for every $f \in L_0$. The modular $\rho_{\varphi,w}$ is {\it orthogonally subadditive}, that is, $\rho_{\varphi,w}(f + g) \leq \rho_{\varphi,w}(f) + \rho_{\varphi,w}(g)$ for every pair $f, g \in L_0$ with $f \wedge g = 0$ (disjointly supported). Also, if $|f| \leq |g|$, then $\rho_{\varphi,w}(f) \leq \rho_{\varphi,w}(g)$; in other words, the modular $\rho_{\varphi,w}$ is {\it monotone}.

\vskip 3pt

For every pair of functions \,$(\varphi , w)$ \,as above, the corresponding {\it Orlicz-Lorentz space} $\Lambda_{\varphi,w}$ is defined by
\[
\Lambda_{\varphi,w} = \{f \in L_0: \rho_{\varphi,w}(kf) < \infty\,\,\, \text{for some}\,\,\, k > 0\}.
\]
We consider the Luxemburg norm $\|\cdot\|_{\varphi, w}$ on $\Lambda_{\varphi,w}$, that is given by
\[
\|f\|_{\varphi,w} = \inf\left\{\var > 0: \rho_{\varphi,w}\left(\frac{f}{\var}\right)\leq 1\right\}.
\]
From now on, we assume that $\Lambda_{\varphi,w}$ is equipped with the Luxemburg norm. Then \,$(\Lambda_{\varphi,w}, \|\cdot\|_{\varphi,w})$ \,is a Banach space
(in fact, it is a Banach lattice; see, e.g., \cite{Mast}). It is well-known (see \cite{K}) that the order-continuous subspace $(\Lambda_{\varphi,w})_a$ of an
Orlicz-Lorentz space can be expressed by
\[
(\Lambda_{\varphi,w})_a = (\Lambda_{\varphi,w})_b = \{f \in L_0: \rho_{\varphi,w}(kf) < \infty\,\,\, \text{for every}\,\,\, k > 0\}.
\]

We mention that the Orlicz-Lorentz space becomes the Lorentz space $\Lambda_{p,w}(I)$ if $\varphi(u) = u^p$, the Orlicz space $L_{\varphi}(I)$ if $w \equiv 1$, and $L_p$ if we let $\varphi(u) = u^p$ and $w \equiv 1$.

\subsection{``Extendable'' and pointwise lineability}

We have seen that the $\alpha$-lineability ($\alpha$-spaceability) on a nonlinear subset of a topological vector space $X$ concerns the existence of (closed) $\alpha$-dimensional vector subspaces. Now, for an $\alpha$-lineable ($\alpha$-spaceable) set $A \subset X$, let us consider all $\alpha$-dimensional vector subspaces of $X$ in $A$. Then, several natural questions arise. First, we can ask for the existence of $\beta$-dimensional (closed) vector subspaces of $X$ in $A$, $\beta \geq \alpha$, that contain those $\alpha$-dimensional vector subspaces. Also, it is natural to ask how large the dimension $\beta$ can be if such subspaces exist. On the other hand, for each point in $A$, we can also question the existence of an $\alpha$-dimensional vector subspace of $X$ in $A$ that contains the point. These questions led to the following properties (see \cite{CGP,FPRR,FPT, PR}).

\begin{definition}
	Let $X$ be a topological vector space defined over a field, $A \subset X$ and $\alpha \leq \beta$ cardinal numbers.
	We say that $A$ is:
		\begin{enumerate}[\rm(i)]
			\item \emph{$(\alpha,\beta)$-lineable} if $A$ is $\alpha$-lineable and for every $\alpha$-dimensional vector subspace $M_\alpha$ of $X$ with $M_\alpha \subset A\cup \{0\}$ there is a $\beta$-dimensional vector subspace $M_\beta$ of $X$ such that $M_\alpha \subset M_\beta \subset A\cup \{0\}$.
			\item \emph{$(\alpha,\beta)$-spaceable} if $A$ is $\alpha$-lineable and for every $\alpha$-dimensional vector subspace $M_\alpha$ of $X$ with $M_\alpha \subset A\cup \{0\}$ there is a closed $\beta$-dimensional vector subspace $M_\beta$ of $X$ such that $M_\alpha \subset M_\beta \subset A\cup \{0\}$.
			\item \emph{$(\alpha,\beta)$-dense-lineable} if $A$ is $\alpha$-lineable and for every $\alpha$-dimensional vector subspace $M_\alpha$ of $X$ with $M_\alpha \subset A\cup \{0\}$ there is a dense $\beta$-dimensional vector subspace $M_\beta$ of $X$ such that $M_\alpha \subset M_\beta \subset A\cup \{0\}$.
			\item \emph{pointwise $\alpha$-lineable} if for every $x \in A$ there is an $\alpha$-dimensional vector subspace $M_{\alpha,x}$ of $X$ such that $x \in M_{\alpha,x} \subset A\cup \{0\}$.
			\item \emph{pointwise maximal-lineable} if $A$ is \emph{pointwise $\dim(X)$-lineable}.
			\item \emph{pointwise $\alpha$-dense-lineable} if for every $x \in A$ there is an $\alpha$-dimensional dense vector subspace $M_{\alpha,x}$ of $X$ such that $x \in M_{\alpha,x} \subset A\cup \{0\}$.
			\item \emph{pointwise maximal dense-lineable} if $A$ is \emph{pointwise $\dim(X)$-dense-lineable}.
		\end{enumerate}
\end{definition}

On the one hand, the $(\alpha,\beta)$-lineability (resp. $(\alpha,\beta)$-spaceability, $(\alpha,\beta)$-dense-lineablity) can be seen as an extendability property of the $\alpha$-lineability (resp. $\alpha$-spaceability, and $\alpha$-dense-lineability).
On the other hand, the pointwise $\alpha$-lineability (resp. pointwise maximal-lineability, pointwise $\alpha$-dense-lineability, pointwise maximal dense-lineability) can be described as the local version of the $\alpha$-lineability (resp. maximal-lineability, $\alpha$-dense-lineability, maximal dense-lineability).

The $(\alpha,\alpha)$-lineability is equivalent to $\alpha$-lineability. From the fact that $(0,\beta)$-lineability (spaceability) is equivalent to the $\beta$-lineability (spaceability), the $(\alpha, \beta)$-lineability (spaceability) implies the $\beta$-lineability (spaceability).
Similarly, the $(\alpha, \alpha)$-spaceability implies the $\alpha$-spaceability, but the converse is not true in general.
Indeed, as stated earlier $L_p[0,1] \setminus \bigcup_{q >p} L_q [0,1]$ is $\mathfrak c$-spaceable for any $p>0$, but as an immediate consequence of \cite[Corollary~2.4]{FPRR}, we obtain that $L_p[0,1] \setminus \bigcup_{q>p} L_q [0,1]$ is not $(\mathfrak c,\mathfrak c)$-spaceable. In fact, Araújo {\it et al.} proved in \cite[Theorem~3]{ABRR} that $L_p[0,1] \setminus \bigcup_{q>p} L_q [0,1]$ is $(\alpha,\mathfrak c)$-spaceable if and only if $\alpha < \aleph_0$ (see also \cite[Corollary~10]{ABRR2}). If $\alpha_1<\alpha_2$, then the $\alpha_2$-lineability implies the $\alpha_1$-lineability. However, there is no analogous statement for the $(\alpha,\beta)$-lineability property. Precisely, if $\alpha_1<\alpha_2\leq \beta$, then $(\alpha_2,\beta)$-lineability does not imply $(\alpha_1,\beta)$-lineability and vice versa (see \cite{FPT}).
Finally, pointwise $\alpha$-lineability implies $(1,\alpha)$-lineability, but the reverse implication is not necessarily true (see \cite{PR}).



\subsection{Tools for spaceability in a vector space} For a Banach space \,$X$ \,over \,$\mathbb F$ ($=\R$ or $\C$), a sequence $(x_n)_{n=1}^{\infty}$ is said to be {\it basic} if it is a Schauder basis for $\overline{\rm span}\{x_n : n\in \mathbb N \}$. It is well-known that a sequence $(x_n)_{n=1}^\infty$ is basic if, and only if, there is a constant $C> 0$ such that for every $r,s \in \mathbb{N}$ where $r \geq s$ and every finite sequence of scalars $a_1, \dots, a_r$, we have  $\left\|\sum_{n=1}^{s}a_n x_n\right\|_X\leq C\left\|\sum_{n=1}^{r}a_n x_n\right\|_X$. A subset $S \in X$ is a {\it cone} if \,$c\,S \subset S$ \,for all \,$c \in \mathbb{F}$.

\vskip 3pt

A topological vector space $X$ of $\mathbb{F}$-valued functions on $\Omega$ is a {\it PCS-space} if for a given sequence $(f_n)_{n=1}^\infty \subset X$, the existence of $f \in X$ with $f_n \rightarrow f$ in $X$ implies the existence of a subsequence $(n_k)_{k=1}^\infty \subset \mathbb{N}$ depending on $(f_n)_{n=1}^\infty$ such that for every $t \in \Omega$, $f_{n_k}(t) \rightarrow f(t)$ as $k \rightarrow \infty$. We have the following relationship between PCS-spaces and their spaceability.

\begin{Theorem}\label{th:pcs}\cite[Theorem 2.2]{BO}
	Let $\Omega$ be a nonempty set. Assume that $(X, \|\cdot\|_X)$ is a Banach space of $\mathbb{F}$-valued functions on $\Omega$ and that $B$ is a nonempty subset of $X$ satisfying the following properties:
	\begin{enumerate}[\rm(i)]
		\item $X$ is a PCS-space.
		\item There is a constant $C \in (0, \infty)$ such that $\|f + g\| \geq C\|f\|$ for all $f,g \in X$ with $\supp f \cap \supp g = \varnothing$.
		\item $B$ is a cone.
		\item If $f, g \in X$ such that $f + g \in B$ and $\supp f \cap \supp g = \varnothing$, then $f,g \in B$.
		\item There exists a sequence of functions $(f_n)_{n=1}^{\infty}$ with pairwise disjoint supports such that, for every $n \in \mathbb{N}$, $f_n \in X \setminus B$.
	\end{enumerate}
	Then $X \setminus B$ is $\mathfrak c$-spaceable.
\end{Theorem}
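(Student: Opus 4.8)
The plan is to build the required closed subspace directly from the disjointly supported sequence $(f_n)_{n=1}^\infty$ provided by hypothesis~(v), and to show that its closed linear span already avoids $B$ away from the origin. First I note that each $f_n \ne 0$: since $B$ is a nonempty cone, $0 \in B$, while $f_n \in X \setminus B$, so $f_n \ne 0$. The first substantive step is to use hypothesis~(ii) to show that $(f_n)_{n=1}^\infty$ is a basic sequence. Given scalars $a_1, \dots, a_r$ and $s \le r$, I would split $\sum_{n=1}^r a_n f_n = \left(\sum_{n=1}^s a_n f_n\right) + \left(\sum_{n=s+1}^r a_n f_n\right)$; the two summands are disjointly supported because the $f_n$ are, so~(ii) gives $\left\|\sum_{n=1}^r a_n f_n\right\|_X \ge C\left\|\sum_{n=1}^s a_n f_n\right\|_X$. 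Hence the basic-sequence criterion recalled above holds with constant $C^{-1}$, and $(f_n)_{n=1}^\infty$ is basic. I then set $Y := \overline{\rm span}\{f_n : n \in \mathbb N\}$, an infinite-dimensional closed subspace on which $(f_n)$ is a Schauder basis.

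The heart of the argument is to prove that $Y \subset (X \setminus B) \cup \{0\}$. I fix $f \in Y$ with $f \ne 0$, expand it in the basis as $f = \sum_{n=1}^\infty a_n f_n$ (norm-convergent), and choose an index $n_0$ with $a_{n_0} \ne 0$. Writing $h := f - a_{n_0} f_{n_0} = \sum_{n \ne n_0} a_n f_n$, I must verify that $\supp h \cap \supp f_{n_0} = \varnothing$. This is exactly where the PCS hypothesis~(i) enters, and I expect it to be the main obstacle, since disjointness of supports is a pointwise property that need not survive passage to a norm limit. The partial sums $h_N := \sum_{n \le N,\, n \ne n_0} a_n f_n$ converge to $h$ in $X$, so by~(i) there is a subsequence with $h_{N_k}(t) \to h(t)$ for every $t \in \Omega$. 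For each $t$ with $f_{n_0}(t) \ne 0$, disjointness forces $f_n(t) = 0$ for all $n \ne n_0$, hence $h_{N_k}(t) = 0$ for all $k$ and therefore $h(t) = 0$; this yields the desired disjointness.

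With the decomposition $f = a_{n_0} f_{n_0} + h$ into disjointly supported functions in hand, I would argue by contradiction. If $f \in B$, then hypothesis~(iv) forces both $a_{n_0} f_{n_0} \in B$ and $h \in B$; since $a_{n_0} \ne 0$ and $B$ is a cone by~(iii), it follows that $f_{n_0} = a_{n_0}^{-1}(a_{n_0} f_{n_0}) \in B$, contradicting $f_{n_0} \in X \setminus B$ from~(v). Hence $f \notin B$, and as $f \in Y \setminus \{0\}$ was arbitrary, I conclude $Y \subset (X \setminus B) \cup \{0\}$.

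It remains to record that $\dim Y = \mathfrak c$. Being an infinite-dimensional Banach space, $Y$ has algebraic dimension at least $\mathfrak c$ by the Baire category theorem; being separable, as the closed span of a countable set, its cardinality, and hence its dimension, is at most $\mathfrak c$. Thus $Y$ is a closed $\mathfrak c$-dimensional subspace contained in $(X \setminus B) \cup \{0\}$, which is precisely the assertion that $X \setminus B$ is $\mathfrak c$-spaceable.
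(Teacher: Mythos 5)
Your proof is correct and follows essentially the same route as the argument in \cite{BO} that the paper relies on (and summarizes in the remark after Theorem~\ref{th:pcs}): condition (ii) makes $(f_n)_{n=1}^\infty$ a basic sequence, the PCS property transfers disjointness of supports to elements of the closed span, and (iii)--(iv) then exclude every nonzero element of $\overline{\rm span}\{f_n : n\in\mathbb N\}$ from $B$. The final dimension count via Baire category and separability is also the standard justification of the $\mathfrak c$-spaceability claim.
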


\vskip -5pt

It is important to mention that the original conclusion of Theorem~\ref{th:pcs} states that \,$X\setminus B$\, is spaceable.
However, a careful inspection on the proof of Theorem \ref{th:pcs} reveals that the sequence of functions $(f_n)_{n=1}^{\infty}$ from (v) forms, in fact, a basic sequence in $X$.
Hence, we see that $\overline{\rm span}\{f_n : n\in \mathbb N \}$ is a closed vector subspace contained in $X\setminus B$; which implies precisely the $\mathfrak c$-spaceability but not necessarily $\kappa$-spaceability property for some cardinal number $\kappa >\mathfrak c$.
Also, the conditions (i) and (ii) always hold for Banach function spaces. Indeed, from the fact that functions in a Banach function space $X$ are equal with respect to the $\mu$-a.e. equivalence relation, the condition (i) holds. The condition (ii) directly comes from the definition of a Banach function space.


\vskip 3pt

We can also observe the spaceability of rearrangement invariant Banach function spaces $X$ over $[0,1]$ through $T$-subsets \cite{RS}. For \,$a \in [0,1)$, $b \in (0, 1- a]$ \,and a measurable function \,$f$ \,on \,$[0,1]$, consider the linear operator \,$f \mapsto T_{a,b} f$ \,given by \,$(T_{a,b}f) (t) = f \left(\frac{t-a}{b}\right)\chi_{(a, a + b]}(t)$. From the fact that this operator is bounded from $L_1$ to itself and from $L_{\infty}$ to itself, it is also bounded from any rearrangement invariant Banach function space $X$ to itself by Calder\'on-Mitjagin interpolation theorem \cite[Theorem 2.a.10]{LT2}. Then, for a rearrangement invariant Banach function space $X$ on $[0,1]$, a subset $E \subset X$ is said to be a {\it $T$-subset} if it satisfies:
	\begin{enumerate}[\rm(i)]
		\item If $g \in E$ and $|f(t)| \leq |g(t)|$ $\mu$-a.e. on $[0,1]$, then $f \in E$.
		\item If there exists $a \in [0, 1)$ and $b \in (0, 1-a]$ such that $T_{a,b}(f) \in E$, then $f \in E$.
	\end{enumerate}
	
We have the following characterization for the complement of a $T$-subset to be spaceable.
	
\begin{Theorem}\cite[Theorem 3.1]{RS}
		Let \,$X$ be a rearrangement invariant Banach function space on $[0,1]$ and $E \subset X$ be a $T$-subset. The following statements are equivalent:
		\begin{enumerate}[\rm(i)]
			\item $X \setminus E$ is spaceable.
			\item $X \setminus E$ is nonempty.			
		\end{enumerate}
\end{Theorem}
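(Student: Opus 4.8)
The implication (i)$\Rightarrow$(ii) is immediate, since an infinite-dimensional subspace $M\subset (X\setminus E)\cup\{0\}$ already meets $X\setminus E$. So the plan is to prove (ii)$\Rightarrow$(i): assuming $X\setminus E\neq\varnothing$, fix $f_0\in X\setminus E$ and build an infinite-dimensional closed subspace inside $(X\setminus E)\cup\{0\}$. First I would record two consequences of the $T$-subset axioms that get used repeatedly. By solidity (property (i)), the complement is \emph{upward closed}: if $h\notin E$ and $|h|\le|g|$ a.e., then $g\notin E$; in particular $f_0\notin E$ and $\lambda\ge 1$ force $\lambda f_0\notin E$. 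By the contrapositive of property (ii), \emph{compressions stay outside} $E$: if $g\notin E$, then $T_{a,b}g\notin E$ for every admissible $a,b$. Note also that $T_{a,b}(\lambda g)=\lambda\,T_{a,b}g$, and a direct computation of distribution functions gives $(T_{a,b}g)^*=g^*(\cdot/b)\,\chi_{(0,b]}$, so $\|T_{a,b}f_0\|_X\to 0$ as $b\to 0$.

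The subspace will be the closed linear span of a sequence $(F_j)_{j\ge 1}$ with pairwise disjoint supports, where each $F_j$ is itself an infinite \emph{stack} of compressed, rescaled copies of $f_0$ with \emph{heights tending to infinity}. Concretely, I would partition $[0,1)$ into disjoint blocks $B_j$ of positive measure; inside $B_j$ choose disjoint subintervals $I_{j,n}=(a_{j,n},a_{j,n}+b_{j,n}]$ and scalars $k_{j,n}\uparrow\infty$, and set $F_j=\sum_n T_{a_{j,n},b_{j,n}}(k_{j,n}f_0)$. Since $\|T_{a,b}f_0\|_X\to 0$ as $b\to 0$, the lengths $b_{j,n}$ can be taken small enough (and summing to at most $\mu(B_j)$) that $\sum_n k_{j,n}\|T_{a_{j,n},b_{j,n}}f_0\|_X<\infty$; hence each $F_j\in X$ with $\supp F_j\subset B_j$.

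It then remains to verify $\overline{\mathrm{span}}\{F_j:j\in\N\}\subset (X\setminus E)\cup\{0\}$. Let $h\neq 0$ lie in this closure. Restriction to a block is continuous (solidity of $X$), and the $F_j$ have disjoint supports, so every block restriction satisfies $h\chi_{B_j}=c_jF_j$ for a scalar $c_j$, and since $h\neq 0$ some $c_{j_0}\neq 0$. On a subinterval $I_{j_0,n}$ one has $h\chi_{I_{j_0,n}}=T_{a_{j_0,n},b_{j_0,n}}(c_{j_0}k_{j_0,n}f_0)$. Choosing $n$ so large that $|c_{j_0}|\,k_{j_0,n}\ge 1$, upward closure gives $c_{j_0}k_{j_0,n}f_0\notin E$, and the compression fact gives $h\chi_{I_{j_0,n}}\notin E$; since $|h\chi_{I_{j_0,n}}|\le|h|$, upward closure yields $h\notin E$. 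As the $F_j$ are nonzero with disjoint supports, they are linearly independent, so their closed span is infinite-dimensional, establishing spaceability.

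The genuine obstacle—and the reason this does not reduce to a one-line appeal to Theorem~\ref{th:pcs} with $B=E$—is that a $T$-subset need not be a \emph{cone}: scaling $f_0$ \emph{down} may land inside $E$, so a plain closed span of single compressed copies would fail for small coefficients (the element $\tfrac12 F_1$ must still lie outside $E$). The growing-height stacking is precisely the device that neutralizes this: inside each block it manufactures pieces of arbitrarily large amplitude, so that multiplication by any fixed nonzero scalar still leaves a piece of height $\ge 1$ times $f_0$, which survives in $X\setminus E$. Everything else (norm convergence of each $F_j$, identification of the closed span through block restrictions, and the fact that a disjointly supported sequence is unconditional) is routine in a rearrangement-invariant Banach function space and mirrors the verification of hypotheses (i), (ii) and (iv) of Theorem~\ref{th:pcs}.
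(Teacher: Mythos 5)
A preliminary remark: the paper does not prove this statement --- it is quoted from \cite[Theorem 3.1]{RS} --- so there is no in-house proof to compare against, and I am judging your argument on its own merits. Your skeleton (the two contrapositives of the $T$-subset axioms, disjoint blocks, block-restriction to identify elements of the closed span, and the observation that a $T$-subset need not be a cone, which is what motivates the growing stacks) is sensible, and the final verification that a nonzero $h$ in the closed span escapes $E$ is correct \emph{granted that the vectors $F_j$ actually belong to $X$}. That is where the gap lies: the claim that $\|T_{a,b}f_0\|_X\to 0$ as $b\to 0^+$ is false in a general rearrangement invariant Banach function space. Indeed $(T_{a,b}f_0)^*=f_0^*(\cdot/b)\ge f_0^*(1^-)\chi_{(0,b]}$, so $\|T_{a,b}f_0\|_X\ge f_0^*(1^-)\,\|\chi_{(0,b]}\|_X$; in $X=L_\infty[0,1]$ one has $\|T_{a,b}f_0\|_\infty=\|f_0\|_\infty$ for every $b$, and in a nonseparable Orlicz space $L_\Phi[0,1]$ any $f_0$ with $\rho_\Phi(f_0)<\infty$ but $\rho_\Phi(f_0/\varepsilon)=\infty$ for all $\varepsilon<1$ satisfies $\|T_{a,b}f_0\|_\Phi\ge 1$ for all $b$, since $\rho_\Phi(T_{a,b}f_0/\varepsilon)=b\,\rho_\Phi(f_0/\varepsilon)=\infty$. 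The decay you need does hold when $f_0$ is an order-continuous element of $X$, but nothing guarantees that $X\setminus E$ contains one (take $E=X_a$, which is a $T$-subset). Without the decay, $\sum_n k_{j,n}\|T_{a_{j,n},b_{j,n}}f_0\|_X$ cannot be made finite with $k_{j,n}\uparrow\infty$, the stacks $F_j$ need not converge in $X$, and the construction collapses.

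Moreover, the cone issue you flag is not only real but, pushed one step further, shows that the statement must carry an implicit hypothesis: in $X=L_\infty[0,1]$ the closed unit ball $E=\{f:\|f\|_\infty\le 1\}$ satisfies both $T$-subset axioms (because $\|T_{a,b}f\|_\infty=\|f\|_\infty$), its complement is nonempty, yet no nontrivial subspace avoids $E$, since scaling any vector down lands inside $E$. In \cite{RS} and in every application made in the present paper, $E$ is a union of linear subspaces of $X$, hence invariant under scalar multiplication; under that hypothesis your whole difficulty evaporates: for $h$ in the closed span with $c_{j_0}\ne 0$ one gets $c_{j_0}f_0\notin E$ directly, so single compressed copies $T_{a_j,b_j}f_0$ placed on disjoint intervals already suffice, no growing heights are needed, and the problematic norm-decay claim never arises. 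In short, the extra machinery you introduce is simultaneously what breaks your proof and what the hypothesis that actually makes the theorem true renders unnecessary.
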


\noindent If the space $X$ is separable, we have something more:

\begin{Corollary}\cite[Corollary 3.2]{RS}\label{lem:inclusion}
		Let \,$X$ be a separable rearrangement invariant Banach function space and $(X_i)_{i \in I}$ be a collection of rearrangement invariant Banach function spaces contained in $X$. The following statements are equivalent:
		\begin{enumerate}[\rm(i)]
			\item $X \setminus \bigcup_{i \in I} X_i$ is spaceable.
			\item $X \setminus \bigcup_{i \in I} X_i$ is nonempty.
			\item $\bigcup_{i \in I} X_i$ is not closed in $X$.
		\end{enumerate}
	\end{Corollary}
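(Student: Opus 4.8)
The plan is to recognize $E := \bigcup_{i\in I}X_i$ as the set whose complement is governed by the $T$-subset characterization \cite[Theorem 3.1]{RS}, so that the whole corollary reduces to two tasks: verifying that $E$ is a $T$-subset, and using separability to pin down the topological meaning of statement (iii). Once $E$ is known to be a $T$-subset, \cite[Theorem 3.1]{RS} instantly yields the equivalence of (i) and (ii), since that theorem asserts precisely that $X\setminus E$ is spaceable if and only if it is nonempty. I would therefore organize the argument as: (a) show $E$ is a $T$-subset; (b) invoke \cite[Theorem 3.1]{RS} for (i)$\Leftrightarrow$(ii); (c) prove (ii)$\Leftrightarrow$(iii) directly. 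At the outset I would record the tacit nontriviality hypotheses $I\neq\varnothing$ and some $X_i\neq\{0\}$, without which (iii) degenerates.

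For step (a) I would check the two defining properties of a $T$-subset. Property (i), solidity, is immediate: if $g\in E$, then $g\in X_{i}$ for some $i$, and if $|f|\le|g|$ a.e.\ then $f\in X_{i}\subset E$ because each $X_{i}$ is a solid Banach function space. Property (ii) is the heart of the matter and the step I expect to be the main obstacle, because $T_{a,b}f$ is \emph{not} equimeasurable with $f$ -- it is a copy of $f$ compressed into the subinterval $(a,a+b]$ -- so rearrangement invariance cannot be applied to it directly. Concretely, assume $T_{a,b}f\in E$, say $T_{a,b}f\in X_{i}$. A change of variables gives $d_{T_{a,b}f}(\lambda)=b\,d_{f}(\lambda)$, whence the rearrangement identity $(T_{a,b}f)^{*}(t)=f^{*}(t/b)$. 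Since $X_{i}$ is rearrangement invariant, $(T_{a,b}f)^{*}\in X_{i}$; applying the dilation operator $D_{b}$, $(D_{b}h)(t)=h(bt)$, which is bounded on every rearrangement invariant space on $[0,1]$ (see \cite{BS}), we recover $f^{*}=D_{b}\bigl[(T_{a,b}f)^{*}\bigr]\in X_{i}$. Rearrangement invariance of $X_{i}$ then gives $f\in X_{i}\subset E$, establishing property (ii).

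Finally, for step (c) I would use separability to show that $E$ is dense in $X$. Every nontrivial rearrangement invariant space on $[0,1]$ contains $L_{\infty}$: from any $0\neq g\in X_{i}$ one gets $c\,\chi_{[0,t_{0}]}\le g^{*}$ for suitable $c,t_{0}>0$, so by solidity and rearrangement invariance all indicators, and hence all simple functions, lie in $X_{i}\subset E$. Since $X$ is separable, the simple functions are dense in $X$ (a separable rearrangement invariant space coincides with the closure of its simple functions), so $\overline{E}=X$. Consequently $E$ is closed if and only if $E=X$, i.e.\ if and only if $X\setminus E=\varnothing$; taking contrapositives yields exactly (ii)$\Leftrightarrow$(iii), statement (iii) being that $E$ is not closed. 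Combined with step (b), this closes the cycle. The delicate points are thus the dilation estimate underlying property (ii) and the density statement, both of which rest on standard rearrangement-invariant machinery rather than on anything specific to Orlicz--Lorentz spaces.
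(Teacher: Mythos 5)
The paper gives no proof of this corollary — it is quoted verbatim from \cite[Corollary 3.2]{RS} — but your argument is correct and is precisely the intended derivation: one checks that $E=\bigcup_{i\in I}X_i$ is a $T$-subset (solidity of each $X_i$, plus the identity $(T_{a,b}f)^*(t)=f^*(t/b)$ and the boundedness of the dilation operator on every rearrangement invariant space on $[0,1]$, so that $T_{a,b}f\in X_i$ forces $f\in X_i$), after which \cite[Theorem 3.1]{RS} gives (i)$\Leftrightarrow$(ii), and the equivalence with (iii) follows from the density of $E$ in the separable space $X$ via $L_\infty\subset X_i$ and the density of simple functions. Your explicit flagging of the tacit nontriviality of the family $(X_i)_{i\in I}$, and your care with the fact that $T_{a,b}f$ is not equimeasurable with $f$, are both appropriate; I see no gap.
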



\section{On the inclusion operator between  Orlicz-Lorentz spaces}
In this section, we study the inclusion operators between Orlicz-Lorentz spaces. Since Orlicz-Lorentz spaces $\Lambda_{\varphi,w}$ are defined by an Orlicz function $\varphi$ and a weight function $w$, we will fix one of the parameters and provide a characterization of the inclusion relation between these spaces respectively. These characterizations will play an important role to show the emptiness of certain subsets of Orlicz-Lorentz spaces in this section and to study extendable and pointwise lineability on certain subsets of Orlicz-Lorentz spaces in Section 4.

\vskip 3pt

For the first point, since the union of rearrangement invariant Banach function spaces contained in another rearrangement invariant Banach function space is a $T$-subset, we will consider disjointly strictly singular inclusion operators from an Orlicz-Lorentz space to another one in view of Corollary \ref{lem:inclusion}. For a Banach function lattice $X$ and a Banach space $Y$, an operator \,$T : X \rightarrow Y$ \,is said to be {\it disjointly strictly singular} (DSS) if, for every disjoint sequence of non-null vectors $(x_n)_{n = 1}^{\infty} \subset X$, the restriction  \,$T|_{\overline{\rm span}\{x_n : n\in \mathbb N\}}$ \,is not an isomorphism. Recall that $(x_n)_{n=1}^{\infty}$ is said to be disjoint whenever \,$x_n \wedge x_m = 0$ if $n \ne m$. Every strictly singular operator is known to be DSS, but the converse is not true in general. Recall that a bounded linear operator \,$T : X \to Y$ \,between normed spaces is said to be {\it strictly singular} if it is not bounded below on any infinite-dimensional subspace.

\vskip 3pt

Characterizations of DSS inclusion operators for Orlicz spaces and Lorentz spaces are provided in \cite{Ast, HR}. However, the characterization for Orlicz-Lorentz function spaces has not yet been known explicitly. We mention that the structure of these spaces depends on both Orlicz functions \,$\varphi$ \,and weight functions $w$. Here we will only consider the DSS inclusion operators when the Orlicz function $\varphi$ is fixed.

\subsection{DSS inclusion operators between Orlicz-Lorentz spaces with a fixed Orlicz function $\varphi$}
Now, for a given Orlicz function $\varphi$ that satisfies the $\Delta_2$-condition when $\gamma = \infty$ or the $\Delta_2^{\infty}$-condition for $\gamma < \infty$, we provide a characterization of inclusion operators between Orlicz-Lorentz spaces with different weight functions. We will consider the modular $\rho_{\varphi,w}$ in the following form:
\[
\rho_{\varphi,w}(f) = \int_I W(d_{\varphi\circ f^*}(\lambda))d\lambda,	
\]
where $d_{\varphi \circ f^*}(\lambda)$ is the distribution function of $\varphi\circ f^*$.

The following auxiliary result will be used in the proof of Theorem \ref{Thm Orl-Lor inclusion-2}.
For a set \,$E \subset \R$, we denote, as usual, the characteristic function of \,$E$ \,by \,$\chi_E$.

\begin{Lemma} \label{lem:inverse of W(t)}
	For a given Orlicz function \,$\varphi$ \,and a given weight \,$w$, we have
	$$
	\varphi \left( \frac{1}{\| \chi_{[0,t]} \|_{\varphi ,w}} \right) = \frac{1}{W(t)} \hbox{ \ for all \ } t > 0.
	$$
\end{Lemma}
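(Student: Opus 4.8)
The plan is to compute the Luxemburg norm of the characteristic function \,$\chi_{[0,t]}$\, directly from the definition and show that the modular condition defining it collapses to a single explicit equation. First I would observe that for the simple function \,$f = \chi_{[0,t]}$\, (with \,$t > 0$), the decreasing rearrangement is itself, i.e.\ \,$f^* = \chi_{[0,t]}$. Hence for any \,$\var > 0$\, we have \,$(f/\var)^* = \frac{1}{\var}\chi_{[0,t]}$, and the modular becomes
\[
\rho_{\varphi,w}\!\left(\frac{f}{\var}\right) = \int_I \varphi\!\left(\tfrac{1}{\var}\chi_{[0,t]}(s)\right) w(s)\,ds = \int_0^t \varphi\!\left(\tfrac{1}{\var}\right) w(s)\,ds = \varphi\!\left(\tfrac{1}{\var}\right) W(t),
\]
using \,$\varphi(0) = 0$\, on \,$(t,\gamma)$\, and \,$W(t) = \int_0^t w$. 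This is the key simplification: the characteristic function makes the integrand constant on its support, so the modular factors as a product of \,$\varphi(1/\var)$\, and \,$W(t)$.

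Next I would translate the norm definition \,$\|f\|_{\varphi,w} = \inf\{\var > 0 : \rho_{\varphi,w}(f/\var) \le 1\}$\, into the condition \,$\varphi(1/\var)W(t) \le 1$, that is, \,$\varphi(1/\var) \le 1/W(t)$. Since \,$\varphi$\, is strictly increasing and continuous (as noted after the definition of an Orlicz function), the map \,$\var \mapsto \varphi(1/\var)$\, is continuous and strictly decreasing in \,$\var$, so the infimum of the admissible \,$\var$\, is attained exactly where equality holds: \,$\varphi(1/\|f\|_{\varphi,w}) = 1/W(t)$. Rearranging, and recalling \,$t > 0$\, guarantees \,$W(t) > 0$\, (as \,$w$\, is positive) so that \,$1/W(t)$\, is a finite positive value in the range of \,$\varphi$\, (which is all of \,$[0,\infty)$\, by unboundedness and continuity), this gives exactly \,$\varphi\!\left(\frac{1}{\|\chi_{[0,t]}\|_{\varphi,w}}\right) = \frac{1}{W(t)}$, as claimed.

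The only point requiring genuine care—and what I expect to be the main (minor) obstacle—is justifying that the infimum in the Luxemburg norm is achieved with equality rather than with a strict inequality. This is where continuity and strict monotonicity of \,$\varphi$\, are essential: one must rule out the degenerate situation where the constraint set \,$\{\var : \varphi(1/\var) \le 1/W(t)\}$\, is open at its infimum. Because \,$\varphi$\, is continuous, the set \,$\{\var > 0 : \varphi(1/\var)W(t) \le 1\}$\, is closed (relative to \,$(0,\infty)$) and is an interval of the form \,$[\var_0, \infty)$, so the infimum \,$\var_0$\, lies in the set and satisfies the equation with equality. I would state this continuity-monotonicity argument explicitly to close the gap, after which the identity follows immediately by applying \,$\varphi$\, to the reciprocal of the norm.
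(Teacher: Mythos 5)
Your proposal is correct and follows essentially the same route as the paper: both compute the modular of \,$\chi_{[0,t]}/\var$\, as \,$\varphi(1/\var)\,W(t)$\, using \,$\chi_{[0,t]} = (\chi_{[0,t]})^*$, and both invoke the continuity and monotonicity of \,$\varphi$\, to show the Luxemburg infimum is attained with equality, yielding \,$\varphi\bigl(1/\|\chi_{[0,t]}\|_{\varphi,w}\bigr) = 1/W(t)$. The extra care you take in justifying that the constraint set is a closed half-line \,$[\var_0,\infty)$\, is exactly the point the paper compresses into its final ``monotonicity and continuity'' remark.
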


\begin{proof}
	Let us fix \,$t > 0$. Since \,$\chi_{[0,t]}$ \,is nonincreasing, we have \,$\chi_{[0,t]} = \left( \chi_{[0,t]} \right)^*$. Then, we obtain
	\begin{align*}
		\varphi \left( \frac{1}{\| \chi_{[0,t]} \|_{\varphi ,w}} \right) &= \varphi \left( \frac{1}{\inf \left\{ \var > 0 : \, \int_I \varphi \left( \frac{\chi_{[0,t](s)}}{\var} \right) \cdot w(s) \,ds  \le 1\right\} } \right) \\
		&=\varphi \left( \frac{1}{\inf \left\{ \var > 0 : \, \int_{0}^{t} \varphi (\frac{1}{\var}) \cdot w(s) \,ds  \le 1\right\}  } \right)\\
		&= \varphi \left( \frac{1}{\inf \left\{ \var > 0 : \, \varphi ({1 \over \var}) \cdot W(t)   \le 1\right\}  } \right) \\
		&= \varphi \left( \sup \left\{ \alpha > 0 : \, \varphi (\alpha ) \le \frac{1}{W(t)} \right\}  \right) = \frac{1}{W(t)},
	\end{align*}
	where in the last equality the monotonicity and the continuity of \,$\varphi$ \,have been used.
\end{proof}

Note also that if we replace the weight \,$w$ \,by a multiple \,$M w$ \,$(M > 0)$ \,then, as it is readily seen, we have \,$\|f\|_{\varphi,Mw} \to \infty$ \,as \,$M \to \infty$. Furthermore, if an Orlicz function $\varphi$ satisfies the appropriate $\Delta_2$-condition, one can see, after normalization, that for any \,$c > 0$,\, there exists \,$M > 0$ \,such that  \,$c \cdot \|f\|_{\varphi,w} \le \|f\|_{\varphi,Mw}$ for every $f \in \Lambda_{\varphi,w}$. Indeed, notice that $\rho_{\varphi,Mw}(f) = M \cdot \rho_{\varphi,w}(f)$ for every $f \in \Lambda_{\varphi,w}$ and $M > 0$. Now, since $\varphi$ satisfies the appropriate $\Delta_2$-condition, $\rho_{\varphi,w}\left(\frac{f}{\|f\|_{\varphi,w}}\right) = 1$. So for every $M \geq c$, we have
	\begin{eqnarray*}
		\rho_{\varphi,w}\left(\frac{cf}{\|f\|_{\varphi,Mw}}\right) &=& \rho_{\varphi,w}\left(\frac{cf/\|f\|_{\varphi,w}}{\|f\|_{\varphi,Mw}/\|f\|_{\varphi,w}}\right) \\
		&\leq& \rho_{\varphi,w}\left(\frac{cf/\|f\|_{\varphi,w}}{\rho_{\varphi,Mw}(f/\|f\|_{\varphi,w})}\right)\\
		&=& \rho_{\varphi,w}\left(\frac{cf/\|f\|_{\varphi,w}}{M\rho_{\varphi,w}(f/\|f\|_{\varphi,w})}\right)\\
		&\leq& \frac{c}{M} \rho_{\varphi,w}\left(\frac{f}{\|f\|_{\varphi,w}}\right) = \frac{c}{M} \leq 1.
	\end{eqnarray*}
	Hence, we see that $c \cdot \|f\|_{\varphi,w} \leq \|f\|_{\varphi,Mw}$ for such $M \geq c$. We mention that the appropriate $\Delta_2$-condition cannot be omitted for this approach because there exists a sequence of functions $(f_n)_{n=1}^{\infty} \in S_{\Lambda_{\varphi,w}}$ with pairwise disjoint supports such that $\rho_{\varphi,w}(f_n) \leq \frac{1}{2^n}$ (see \cite[Lemma 2.3]{K}) if $\varphi$ does not satisfy the appropriate $\Delta_2$-condition.

Also, observe that \,$\Lambda_{\varphi,w} = \Lambda_{\varphi,Mw}$ \,when considered only as sets. Then the closed graph theorem tells us that, under their respective topologies, these Banach spaces are isomorphic.

\begin{Theorem}\label{Thm Orl-Lor inclusion-2}
	Let $I=[0,\gamma)$ with $\gamma \leq \infty$, $\varphi$ be an Orlicz function that satisfies the $\Delta_2$-condition for $\gamma = \infty$ ($\Delta_2^{\infty}$-condition for $\gamma < \infty$) and $w_1$ and $w_2$ be weight functions on $I$. We denote \,$W_i(t) = \int_{0}^{t} w_i(s) \,ds$ \,for \,$i=1,2$.
	Then the following statements are equivalent:
	\begin{enumerate}[\rm(i)]
		\item There exists a constant \,$K >0$ \,such that \,$W_2(t) \leq K \cdot W_1(t)$ \,for all \,$t \in I$.
		\item $\Lambda_{\varphi,w_1} \subset \Lambda_{\varphi,w_2}$, the inclusion being continuous.
	\end{enumerate}
\end{Theorem}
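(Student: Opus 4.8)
The plan is to prove the two implications separately. I would use the distribution-function form $\rho_{\varphi,w}(f)=\int_I W(d_{\varphi\circ f^*}(\lambda))\,d\lambda$ of the modular for (i)$\Rightarrow$(ii), and the test functions $\chi_{[0,t]}$ together with Lemma~\ref{lem:inverse of W(t)} for (ii)$\Rightarrow$(i); the growth hypothesis on $\varphi$ will be needed only for the latter direction.

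For (i)$\Rightarrow$(ii), assume $W_2(t)\le K\,W_1(t)$ for all $t\in I$. Using the displayed form of the modular and the monotonicity of each $W_i$, I would estimate, for every $f\in L_0$,
\[
\rho_{\varphi,w_2}(f)=\int_I W_2(d_{\varphi\circ f^*}(\lambda))\,d\lambda \le K\int_I W_1(d_{\varphi\circ f^*}(\lambda))\,d\lambda = K\rho_{\varphi,w_1}(f)=\rho_{\varphi,Kw_1}(f).
\]
This pointwise domination first gives the set inclusion (if $\rho_{\varphi,w_1}(kf)<\infty$ for some $k>0$, then $\rho_{\varphi,w_2}(kf)<\infty$ for the same $k$, so $f\in\Lambda_{\varphi,w_2}$). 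Since the Luxemburg norm is monotone with respect to the modular, the same domination yields $\|f\|_{\varphi,w_2}\le\|f\|_{\varphi,Kw_1}$. Finally, the observation preceding the statement (the closed-graph isomorphism $\Lambda_{\varphi,w_1}=\Lambda_{\varphi,Kw_1}$ as sets, so that $\|\cdot\|_{\varphi,w_1}$ and $\|\cdot\|_{\varphi,Kw_1}$ are equivalent) provides a constant $C$ with $\|f\|_{\varphi,Kw_1}\le C\|f\|_{\varphi,w_1}$, and hence the inclusion is continuous.

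For (ii)$\Rightarrow$(i), continuity gives a constant $C$ with $\|f\|_{\varphi,w_2}\le C\|f\|_{\varphi,w_1}$; I would test this on $f=\chi_{[0,t]}$, which lies in $\Lambda_{\varphi,w_1}$. By Lemma~\ref{lem:inverse of W(t)} we have $\|\chi_{[0,t]}\|_{\varphi,w_i}=1/\varphi^{-1}(1/W_i(t))$, so the inequality rearranges to $\varphi^{-1}(1/W_1(t))\le C\,\varphi^{-1}(1/W_2(t))$, and applying the increasing function $\varphi$ gives $1/W_1(t)\le\varphi\!\left(C\varphi^{-1}(1/W_2(t))\right)$. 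The crucial step, and the only place the growth condition enters, is the absorption of the factor $C$: iterating Lemma~\ref{le:C} produces a constant $K'$ with $\varphi(Cu)\le K'\varphi(u)$ (valid for all $u\ge 0$ when $\gamma=\infty$, and only for $u\ge u_0$ in the $\Delta_2^\infty$ case). Evaluating at $u=\varphi^{-1}(1/W_2(t))$ yields $\varphi(C\varphi^{-1}(1/W_2(t)))\le K'/W_2(t)$, whence $W_2(t)\le K'W_1(t)$.

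The main obstacle is the localization required in the case $\gamma<\infty$, where $\Delta_2^\infty$ controls $\varphi(Cu)\le K'\varphi(u)$ only for large $u$, i.e.\ for $u=\varphi^{-1}(1/W_2(t))\ge u_0$, which corresponds to $t$ in an interval $(0,t_*]$ near the origin. On the complementary range $[t_*,\gamma)$ I would argue directly: because $w_2$ is decreasing, $W_2(\gamma^-)<\infty$, while $W_1$ is increasing with $W_1(t_*)>0$, so $W_2(t)/W_1(t)\le W_2(\gamma^-)/W_1(t_*)$ is bounded there. Taking $K$ to be the maximum of this bound and $K'$ then yields the estimate on all of $I$. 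When $\gamma=\infty$ the full $\Delta_2$-condition covers every $u\ge 0$, so no splitting is needed and $K'$ works throughout.
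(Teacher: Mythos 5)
Your proof is correct and follows essentially the same route as the paper's: the distribution-function form of the modular handles (i)$\Rightarrow$(ii), and (ii)$\Rightarrow$(i) is obtained by testing the continuity estimate on $\chi_{[0,t]}$, invoking Lemma~\ref{lem:inverse of W(t)}, and using the $\Delta_2$-hypothesis exactly once to absorb the operator-norm constant. The only cosmetic deviations are that the paper proves continuity in (i)$\Rightarrow$(ii) by a direct convexity estimate on the modular (yielding the explicit constant $K$) rather than through the closed-graph isomorphism $\Lambda_{\varphi,w_1}\cong\Lambda_{\varphi,Kw_1}$, and in (ii)$\Rightarrow$(i) it absorbs the constant by passing to the rescaled weight $Mw_1$ and reapplying Lemma~\ref{lem:inverse of W(t)} instead of iterating Lemma~\ref{le:C} on $\varphi(Cu)$; your explicit splitting of $I$ at $t_*$ in the $\Delta_2^\infty$ case is a correct (and welcome) detail that the paper leaves implicit.
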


\begin{proof}
	(i) $\implies$ (ii): Suppose that \,$W_2(t) \leq K \cdot W_1(t)$ \,$(t \in I)$ \,and let \,$f \in \Lambda_{\varphi,w_1}$.
	Then $\rho_{\varphi,w_1}(kf) < \infty$ for some $k > 0$. Hence we have
	\[
	\rho_{\varphi,w_2}(kf) = \int_I W_2(d_{\varphi \circ kf^*}(\lambda)) \, d\lambda \leq \int_I K \cdot W_1(d_{\varphi \circ kf^*}(\lambda))d\lambda =
	K \, \rho_{\varphi,w_1} (kf) < \infty.
	\]
	Therefore, $f \in \Lambda_{\varphi,w_2}$ and so $\Lambda_{\varphi,w_1} \subset \Lambda_{\varphi,w_2}$.
	
	In order to prove the continuity of the inclusion, without loss of generality, we assume that \,$K > 1$. Then by the convexity of $\varphi$, for a given \,$f \in \Lambda_{\varphi,w_1}$ we have
	\begin{equation*}
		\begin{split}
			\rho_{\varphi,w_2} \left({f \over K \, \|f\|_{\varphi,w_1}} \right) &= \int_I W_2\left(d_{\varphi \circ {f^* \over K \,\|f\|_{\varphi,w_1}}}(\lambda)\right) \, d\lambda \leq \int_I
			K \cdot W_1\left(d_{\varphi \circ {f^* \over K \, \|f\|_{\varphi,w_1}}}(\lambda)\right) \, d\lambda\\
			&= K \cdot \rho_{\varphi,w_1} \left({f \over K \, \|f\|_{\varphi,w_1}} \right)
			=  K \cdot \int_I \varphi \left(\frac{f^*(t)}{K \, \|f\|_{\varphi,w_1}}\right) w_1(t) \,dt \\
			&\le K \cdot {1 \over K} \cdot \int_I \varphi \left(\frac{f^*(t)}{\|f\|_{\varphi,w_1}}\right) w_1(t) \,dt
			= \rho_{\varphi,w_1} \left({f \over \|f\|_{\varphi,w_1}} \right) \le 1.
		\end{split}
	\end{equation*}
	Hence, $\|f\|_{\varphi,w_2} \le K \cdot \|f\|_{\varphi,w_1}$, which yields the continuity.
	
	\vskip 2pt
	
	\noindent (ii) $\implies$ (i): We start from \,$\Lambda_{\varphi,w_1} \subset \Lambda_{\varphi,w_2}$, where the inclusion is continuous.
	For every $t \in I$, let the set \,$E_t := [0,t] \subset I$.
	It follows from the assumption that
	$$\|\chi_{E_t}\|_{\varphi, w_2} \leq c \cdot \|\chi_{E_t}\|_{\varphi,w_1}$$
	for some \,$c > 0$ \,not depending on \,$t$. 
		As observed before the theorem, if $\varphi$ satisfies the appropriate $\Delta_2$-condition, we can choose \,$M > 0$ \,such that  \,$c \cdot \|f\|_{\varphi,w_1} \le \|f\|_{\varphi,Mw_1}$ \,for each \,$f \in \Lambda_{\varphi, w_1}$.
		In particular, we derive
		$$\|\chi_{E_t}\|_{\varphi, w_2} \leq \|\chi_{E_t}\|_{\varphi,Mw_1} \hbox{ \ for all \ } t \in I.$$
		Then, by Lemma \ref{lem:inverse of W(t)} and the fact that $\varphi$ is increasing, we conclude
		$$
		\frac{1}{W_1(t)}= \frac{M}{M \cdot W_1(t)} =  M\cdot \varphi\left(\frac{1}{\|\chi_{E_t}\|_{\varphi,Mw_1}}\right) \leq
		M\cdot \varphi\left(\frac{1}{\|\chi_{E_t}\|_{\varphi,w_2}}\right) = \frac{M}{W_2(t)}.
		$$
		Therefore, there exists $K = M> 0$ such that \,$W_2(t) \leq K \cdot W_1(t)$ \,for all \,$t \in I$, as required.
\end{proof}

If an Orlicz function \,$\varphi$ \,satisfies the $\Delta_2^{\infty}$-condition, it is well-known that the set of simple functions in $\Lambda_{\varphi,w}[0,1]$ is always dense in $\Lambda_{\varphi,w}[0,1]$ (see \cite[Theorem~1.5.5]{BS} and \cite[Theorem 2.4]{K}). Inspired by Astashkin's characterization of DSS inclusion operators between Lorentz spaces in \cite{Ast}, we provide a characterization of these operators between Orlicz-Lorentz spaces when the fixed Orlicz function satisfies the $\Delta_2^{\infty}$-condition.

\begin{Theorem}\label{Thm Orl-Lor inclusion-2-DSS}
	Assume that an Orlicz function \,$\varphi$ \,satisfies the $\Delta_2^{\infty}$-condition and let $w_1$ and $w_2$ be weight functions on $[0,1]$. If \,$\lim_{t\rightarrow 0^+} \frac{W_2(t)}{W_1(t)} = 0$, then the inclusion operator $J: \Lambda_{\varphi,w_1}[0,1] \rightarrow \Lambda_{\varphi,w_2}[0,1]$ is DSS.
\end{Theorem}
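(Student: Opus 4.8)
The plan is to verify the definition of DSS directly. Given an arbitrary disjoint sequence of non-null vectors $(x_n)_{n=1}^{\infty} \subset \Lambda_{\varphi,w_1}[0,1]$, I must show that the restriction of $J$ to $Z := \overline{\mathrm{span}}\{x_n : n \in \mathbb{N}\}$ is not an isomorphism onto its image, i.e.\ is not bounded below. After normalizing so that $\|x_n\|_{\varphi,w_1} = 1$ (which changes neither $Z$ nor disjointness), it suffices to produce unit vectors of $Z$ whose images have norm tending to $0$; the vectors $x_n$ themselves will serve. The structural fact driving everything is that disjointly supported non-null functions on the finite interval $[0,1)$ satisfy $\sum_n \mu(\mathrm{supp}\,x_n) \le 1$, whence $s_n := \mu(\mathrm{supp}\,x_n) \to 0$. (That $J$ is itself well defined and continuous is not in doubt: since each $w_i$ is decreasing it is bounded on every $[c,1)$, so $W_i(1^-) < \infty$, and then $\lim_{t\to 0^+} W_2/W_1 = 0$ forces $W_2 \le K\,W_1$ on all of $[0,1)$, so Theorem~\ref{Thm Orl-Lor inclusion-2} applies.)

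The heart of the argument is a modular comparison for small-support functions, for which the representation $\rho_{\varphi,w}(f) = \int_I W(d_{\varphi\circ f^*}(\lambda))\,d\lambda$ is tailor-made. If $\mu(\mathrm{supp}\,f) \le \delta$, then for every $\lambda > 0$ one has $d_{\varphi\circ f^*}(\lambda) = \mu\{\varphi\circ f^* > \lambda\} \le \mu\{f^* > 0\} = \mu(\mathrm{supp}\,f) \le \delta$, because $\varphi(0)=0$. Given $\epsilon > 0$, the hypothesis provides $\delta > 0$ with $W_2(s) \le \epsilon\,W_1(s)$ for $0 < s \le \delta$; substituting $s = d_{\varphi\circ f^*}(\lambda) \le \delta$ into this inequality and integrating in $\lambda$ yields $\rho_{\varphi,w_2}(f) \le \epsilon\,\rho_{\varphi,w_1}(f)$ for every $f$ with $\mu(\mathrm{supp}\,f) \le \delta$. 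Applied to the normalized $x_n$ (for which $\rho_{\varphi,w_1}(x_n) \le 1$) together with $s_n \to 0$, this gives $\rho_{\varphi,w_2}(x_n) \to 0$. Note that this step uses only the monotonicity of $W_i$ and the weight hypothesis, not $\Delta_2^{\infty}$.

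It then remains to upgrade modular smallness to norm smallness, and this is where $\Delta_2^{\infty}$ enters. Fix a target $\eta \in (0,1)$ and use Lemma~\ref{le:C} to pick $l>1$, $K>1$, $u_0 \ge 0$ and an integer $j$ with $l^j \ge 1/\eta$, so that $\varphi(u/\eta) \le \varphi(l^j u) \le K^j \varphi(u)$ for $u \ge u_0$. Splitting the integral defining $\rho_{\varphi,w_2}(x_n/\eta)$ according to whether $x_n^* \ge u_0$ or $x_n^* < u_0$ gives
\[
\rho_{\varphi,w_2}\!\left(\tfrac{x_n}{\eta}\right) \le K^j\,\rho_{\varphi,w_2}(x_n) + \varphi(l^j u_0)\,W_2(s_n),
\]
since on $\{0 < x_n^* < u_0\} \subseteq [0,s_n]$ the integrand is bounded by $\varphi(l^j u_0)$ and the total $w_2$-mass of $[0,s_n]$ is $W_2(s_n)$. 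Both terms tend to $0$ (the first because $\rho_{\varphi,w_2}(x_n) \to 0$, the second because $s_n \to 0$ forces $W_2(s_n) \to 0$), so $\rho_{\varphi,w_2}(x_n/\eta) \le 1$ for large $n$, i.e.\ $\|x_n\|_{\varphi,w_2} \le \eta$ eventually. As $\eta$ was arbitrary, $\|Jx_n\|_{\varphi,w_2} = \|x_n\|_{\varphi,w_2} \to 0$ while $\|x_n\|_{\varphi,w_1} = 1$, so $J|_Z$ is not bounded below and $J$ is DSS.

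The main obstacle is precisely this last upgrade: $\Delta_2^{\infty}$ controls $\varphi$ only for large arguments, so the small-value contribution to $\rho_{\varphi,w_2}(x_n/\eta)$ cannot be absorbed into $K^j\,\rho_{\varphi,w_2}(x_n)$ and must instead be killed by the boundary term $\varphi(l^j u_0)\,W_2(s_n)$ — which is exactly where the vanishing of the support measures $s_n$ (a free consequence of disjointness on a finite interval) becomes indispensable. Without the support condition the crude split would leave behind a constant of size $\varphi(l^j u_0)\,W_2(1^-)$; it is the interplay between disjointness and $\Delta_2^{\infty}$ that makes the estimate close.
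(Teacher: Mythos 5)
Your proof is correct, but it takes a genuinely different route from the paper's. The paper argues by contradiction: assuming $J$ is not DSS it extracts a disjoint sequence with $\|f_n\|_{\varphi,w_1}\le C\|f_n\|_{\varphi,w_2}$, and then uses the density of simple functions in $\Lambda_{\varphi,w_i}[0,1]$ (which is where $\Delta_2^\infty$ enters there, via $(\Lambda_{\varphi,w_i})_a=(\Lambda_{\varphi,w_i})_b=\Lambda_{\varphi,w_i}$) to replace $f_n$ by normalized simple approximants $g_n$ for which the modular equals the norm, closing the estimate with an $\varepsilon$-perturbation argument. You instead give a direct proof that $\|Jx_n\|_{\varphi,w_2}\to 0$ for any $w_1$-normalized disjoint sequence, and you exploit $\Delta_2^\infty$ quantitatively through Lemma~\ref{le:C}: iterating $\varphi(lu)\le K\varphi(u)$ to get $\varphi(u/\eta)\le K^j\varphi(u)$ for $u\ge u_0$, and absorbing the small-argument remainder into $\varphi(l^ju_0)\,W_2(s_n)\to 0$. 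Both proofs share the same engine (disjointness on $[0,1]$ forces $s_n=\mu(\supp x_n)\to 0$, and $W_2\le\varepsilon W_1$ near $0$ yields the modular comparison $\rho_{\varphi,w_2}\le\varepsilon\,\rho_{\varphi,w_1}$ on small supports via the representation $\rho_{\varphi,w}(f)=\int W(d_{\varphi\circ f^*})$), but your upgrade from modular decay to norm decay replaces the simple-function machinery entirely. What your version buys is a cleaner, more self-contained argument with a slightly stronger conclusion ($Jx_n\to 0$ in norm along every normalized disjoint sequence, not merely failure of an isomorphism); what the paper's version buys is closer alignment with Astashkin's original template for Lorentz spaces. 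All the individual steps you use check out: $\rho_{\varphi,w_1}(x_n/\|x_n\|_{\varphi,w_1})\le 1$ follows from left-continuity of the modular, $d_{\varphi\circ x_n^*}(\lambda)\le s_n$ because $\varphi(0)=0$, and $W_2(s_n)\to 0$ by continuity of $W_2$ at $0$.
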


\begin{proof}
	If $\lim_{t\rightarrow 0^+} \frac{W_2(t)}{W_1(t)} = 0$, then there exists \,$K > 0$ \,such that \,$W_2(t) \leq K W_1(t)$ \,for all \,$t \in [0,1]$ \,due to the continuity of \,$W_1(t)$ \,and \,$W_2(t)$. Hence, the inclusion operator \,$J: \Lambda_{\varphi,w_1}[0,1] \rightarrow \Lambda_{\varphi,w_2}[0,1]$ \,is well-defined by Theorem \ref{Thm Orl-Lor inclusion-2}. Now, suppose, by way of contradiction, that the inclusion operator \,$J:\Lambda_{\varphi,w_1}[0,1] \rightarrow \Lambda_{\varphi,w_2}[0,1]$ \,is not DSS. Then there exists a sequence \,$(f_n)_{n=1}^{\infty} \subset \Lambda_{\varphi,w_1}[0,1] \setminus \{0\}$ \,of disjointly supported functions $f_n \geq 0$ such that $\|f_n\|_{\varphi,w_1} \leq C\|f_n\|_{\varphi,w_2}$ for some \,$C > 0$ \,independent of \,$n$. From the fact that $\lim_{t\rightarrow 0^+} \frac{W_2(t)}{W_1(t)} = 0$, for every $\var \in (0,1)$, there exists $s \in (0,1]$ such that $W_2(t) \leq \var W_1(t)$ for every $t < s$. This implies that we have $\rho_{\varphi,w_2}(f) \leq \var \rho_{\varphi,w_1}(f) \leq \rho_{\varphi,w_1}(f)$ for every function $f \in \Lambda_{\varphi, w_1}$ with $m (\supp f) < s$, and so \,$\var \|f\|_{\varphi,w_2} \leq \|f\|_{\varphi,w_2} \leq \|f\|_{\varphi,w_1}$ \,for such functions.
	
	Now, from the fact that $f_n$'s are disjointly supported on $[0,1]$, we can choose $N \in \mathbb{N}$ such that $m(\supp f_n) < s$ for every $n \geq N$. As mentioned before, if $\varphi$ satisfies the $\Delta_2^{\infty}$-condition, then $\Lambda_{\varphi,w_i} =(\Lambda_{\varphi,w_i})_a = (\Lambda_{\varphi,w_i})_b$, $i = 1,2$.
	Hence there exists a sequence $(g_n)_{n=N}^{\infty}$ of simple functions such that $\supp g_n \subset \supp f_n$ and
	\[
	\max\left\{\left\|\frac{f_n}{\|f_n\|_{\varphi,w_1}} - g_n\right\|_{\varphi,w_1}, \left\|\frac{f_n}{\|f_n\|_{\varphi,w_2}} - g_n\right\|_{\varphi,w_2}\right\} \leq \var.
	\]
	
	
	From the fact that $1 - \var \leq \|g_n\|_{\varphi,w_i} \leq 1 + \var$ for $i = 1, 2$, we have
	\begin{eqnarray*}
		\left\|\frac{f_n}{\|f_n\|_{\varphi,w_i}} - \frac{g_n}{\|g_n\|_{\varphi,w_i}}\right\|_{\varphi,w_i} &\leq& \left\|\frac{f_n}{\|f_n\|_{\varphi,w_i}} - g_n\right\|_{\varphi,w_i}  + \left\|g_n- \frac{g_n}{\|g_n\|_{\varphi,w_i}}\right\|_{\varphi,w_i}\\
		&\leq& \var + \|g_n\|_{\varphi,w_i}\left|1 - \frac{1}{\|g_n\|_{\varphi, w_i}}\right| \leq 2 \var.
	\end{eqnarray*}
	Let $f_n' = \frac{f_n}{\|f_n\|_{\varphi,w_2}}$ and $g_n' = \frac{g_n}{\|g_n\|_{\varphi,w_2}}$. Since $\varphi$ satisfies the $\Delta_2^{\infty}$-condition, $\rho_{\varphi,w_2}(g_n') = \|g_n'\|_{\varphi,w_2}$. Thus, we obtain
	\begin{align*}
		\|f_n'\|_{\varphi,w_2} &\leq \frac{1}{1 - 2\var}\|g_n'\|_{\varphi,w_2} = \frac{1}{1 - 2\var}\rho_{\varphi,w_2}(g_n') \leq \frac{\var}{1 - 2\var} \rho_{\varphi,w_1}(g_n')\\
		&\leq \frac{\var\|g_n\|_{\varphi,w_1}}{(1 - 2\var)\|g_n\|_{\varphi,w_2}}
		\leq \frac{\var}{(1 - 2\var)^2}\|g_n\|_{\varphi,w_1} \leq  \frac{(1 + \var)\var}{(1-2\var)^2} \leq \frac{(1 + \var)\var}{(1-2\var)^2}\|f_n'\|_{\varphi,w_1}.
	\end{align*}
	
	If we choose \,$\var > 0$ \,such that \,$C < \frac{(1- 2\var)^2}{\var(1 + \var)}$, then we get \,$C\|f_n\|_{\varphi,w_2} < \|f_n\|_{\varphi,w_1}$
	\,for all \,$n \ge N$, which contradicts our assumption. The proof is finished.
\end{proof}

\begin{Theorem}\label{Thm OL DSS characterization}
	Assume that the Orlicz function \,$\varphi$ \,satisfies the $\Delta_2^{\infty}$-condition and that \,$w_1$ \,and \,$w_2$ \,are weight functions on $[0,1]$.
	Then the following statements are equivalent:
	\begin{enumerate}[\rm(i)]
		\item $\lim_{t\rightarrow 0^+}\frac{W_2(t)}{W_1(t)} = 0$.
		\item The inclusion operator $J: \Lambda_{\varphi,w_1}[0,1] \rightarrow \Lambda_{\varphi,w_2}[0,1]$ is DSS.
		\item There exist no sequence of nonzero disjoint functions \,$(f_n)_{n=1}^{\infty}$ \,and no constant \,$K > 0$ \,such that
		\,$\|f_n\|_{\varphi, w_1} \leq K \|f_n\|_{\varphi,w_2}$ \,for all \,$n \in \mathbb{N}$.
	\end{enumerate}
\end{Theorem}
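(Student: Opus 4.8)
The plan is to prove the cycle (i) $\Rightarrow$ (ii) $\Rightarrow$ (iii) $\Rightarrow$ (i). The implication (i) $\Rightarrow$ (ii) is precisely Theorem~\ref{Thm Orl-Lor inclusion-2-DSS}: when $\lim_{t\to 0^+}W_2(t)/W_1(t)=0$, the ratio $W_2/W_1$ is bounded on $[0,1]$ by continuity of $W_1$ and $W_2$, so $W_2\le K W_1$ and the inclusion $J$ is a bounded operator by Theorem~\ref{Thm Orl-Lor inclusion-2}; that theorem then shows $J$ is DSS. Thus nothing new is needed here.

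For (ii) $\Rightarrow$ (iii) I would assume that $J$ is DSS (so, in particular, bounded) and argue by contradiction. Suppose there were a disjoint sequence $(f_n)_{n=1}^\infty$ of nonzero functions and a constant $K>0$ with $\|f_n\|_{\varphi,w_1}\le K\|f_n\|_{\varphi,w_2}$; replacing $f_n$ by $|f_n|$ and normalizing so that $\|f_n\|_{\varphi,w_2}=1$ (whence $1/\|J\|\le\|f_n\|_{\varphi,w_1}\le K$), the objective is to show that $J$ is bounded below on the closed span of a subsequence, contradicting DSS. Since $\varphi$ satisfies the $\Delta_2^\infty$-condition, on $[0,1]$ both spaces are order-continuous, $\Lambda_{\varphi,w_i}=(\Lambda_{\varphi,w_i})_a$, so the disjoint sequence $(f_n)$ is $1$-unconditional and each norm $\|\sum a_n f_n\|_{\varphi,w_i}$ depends only on $\sum|a_n|\,|f_n|$. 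The problem then reduces to the single lattice estimate $\|\sum a_n f_n\|_{\varphi,w_2}\ge c\,\|\sum a_n f_n\|_{\varphi,w_1}$ for some $c>0$. To reach it I would use the distribution-function form of the modular from Section~3, the fact that the distribution functions of disjointly supported summands add, and Lemma~\ref{le:C} to pass between the modular and the Luxemburg norm, transferring the per-element comparison to the whole span. Upgrading the per-element bound to this uniform lower bound on the entire closed span is where I expect the main difficulty: the decreasing rearrangement of a disjoint sum mixes the individual profiles, and the concavity of $W_1$ and $W_2$ only delivers the easy upper estimate. The remedy I would pursue is to thin $(f_n)$ out to a subsequence whose supports sit on well-separated scales, following the strategy used by Astashkin for Lorentz spaces in~\cite{Ast}.

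For (iii) $\Rightarrow$ (i) I would argue by contraposition with an explicit construction. If $\lim_{t\to 0^+}W_2(t)/W_1(t)\neq 0$, pick $\delta>0$ and $t_k\downarrow 0$ with $W_2(t_k)\ge\delta\,W_1(t_k)$ for all $k$; passing to a subsequence so that $\sum_k t_k<1$, choose pairwise disjoint sets $E_k\subset[0,1]$ with $m(E_k)=t_k$ and set $f_k=\chi_{E_k}$, which lie in every Orlicz--Lorentz space on $[0,1]$. By rearrangement invariance and Lemma~\ref{lem:inverse of W(t)}, $\|f_k\|_{\varphi,w_i}=1/\varphi^{-1}(1/W_i(t_k))$, where $\varphi^{-1}$ is the continuous increasing inverse of $\varphi$. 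From $W_2(t_k)\ge\delta W_1(t_k)$ we get $1/W_2(t_k)\le(1/\delta)\,(1/W_1(t_k))$, and since convexity of $\varphi$ with $\varphi(0)=0$ gives $\varphi(cu)\ge c\,\varphi(u)$ for $c\ge1$, hence $\varphi^{-1}(\lambda s)\le\lambda\,\varphi^{-1}(s)$ for $\lambda\ge1$, it follows that $\varphi^{-1}(1/W_2(t_k))\le\max\{1,1/\delta\}\,\varphi^{-1}(1/W_1(t_k))$. Equivalently, $\|f_k\|_{\varphi,w_1}\le K\|f_k\|_{\varphi,w_2}$ with $K=\max\{1,1/\delta\}$, so $(f_k)$ is a disjoint sequence witnessing the failure of (iii), which closes the cycle.

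Finally, I would note for orientation that the reverse implication (iii) $\Rightarrow$ (ii) is in fact immediate: if $J$ failed to be DSS there would be a disjoint sequence on whose closed span $J$ is bounded below, and evaluating that lower bound on the basis vectors already yields a per-element inequality of the form in (iii). This confirms the consistency of the equivalence and shows that the genuinely new content is concentrated in the implication (ii) $\Rightarrow$ (iii).
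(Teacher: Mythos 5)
Your implications (i) $\Rightarrow$ (ii) and (iii) $\Rightarrow$ (i) are correct and essentially coincide with the paper's: the first is exactly an appeal to Theorem~\ref{Thm Orl-Lor inclusion-2-DSS}, and for the second the paper also takes a sequence $t_k\downarrow 0$ with $W_1(t_k)\le K\,W_2(t_k)$, puts $f_k=\chi_{J_k}$ on disjoint intervals of measure $t_k$, and deduces $\|f_k\|_{\varphi,w_1}\le K\|f_k\|_{\varphi,w_2}$ (the paper does this by a direct modular estimate rather than through Lemma~\ref{lem:inverse of W(t)} and the inequality $\varphi^{-1}(\lambda s)\le \lambda\varphi^{-1}(s)$, but both computations are valid). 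Your closing observation that non-DSS-ness yields, by restriction to the basis vectors, a disjoint sequence with a per-element comparison --- i.e.\ that $\neg(\mathrm{ii})\Rightarrow\neg(\mathrm{iii})$, equivalently (iii) $\Rightarrow$ (ii) --- is also correct and is precisely the reduction used inside the proof of Theorem~\ref{Thm Orl-Lor inclusion-2-DSS}.

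The genuine gap is in your (ii) $\Rightarrow$ (iii). You correctly identify that its contrapositive requires upgrading a per-element inequality $\|f_n\|_{\varphi,w_1}\le K\|f_n\|_{\varphi,w_2}$ along a disjoint sequence to a lower bound for $J$ on the closed span of a (sub)sequence, you correctly note that orthogonal subadditivity of the modular only gives the wrong-sided estimate, and then you stop: ``the remedy I would pursue is to thin $(f_n)$ out \dots following Astashkin'' is a plan, not a proof. As written, the cycle does not close, because the implication you do establish from the definition is (iii) $\Rightarrow$ (ii), and together with (i) $\Rightarrow$ (ii) and (iii) $\Rightarrow$ (i) this still leaves (ii) outside the equivalence class of (i) and (iii). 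For comparison, the paper simply asserts that (ii) $\Rightarrow$ (iii) ``comes from the definition''; unwinding the definition of DSS only yields the converse direction, so the paper's one-line dispatch does not supply the stabilization argument either --- but that does not repair your proposal. To complete your route you would need to actually carry out the subsequence selection (e.g.\ arrange the supports so that, for every choice of signs and coefficients, the rearrangement of $\sum a_nf_n$ is controlled block by block, so that $\rho_{\varphi,w_2}\ge c\,\rho_{\varphi,w_1}$ on the span), or else replace (ii) $\Rightarrow$ (iii) by a direct proof of (ii) $\Rightarrow$ (i).
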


\begin{proof}
	The implication (i) $\implies$ (ii) is a direct consequence of Theorem \ref{Thm Orl-Lor inclusion-2-DSS}, while (ii) $\implies$ (iii) comes from the definition. So it suffices to show (iii) $\implies$ (i).
	
	Assume, by way of contradiction, that $\liminf_{t\rightarrow 0^+} \frac{W_2(t)}{W_1(t)} > 0$. Then there exists a sequence of positive real numbers \,$(t_n)_{n=1}^{\infty}$ \,with \,$\sum_{n=1}^{\infty}t_n \leq 1$ \,as well as a constant \,$K > 0$ such that \,$W_1(t_n) \leq K \cdot W_2(t_n)$ \,for all \,$n \in \mathbb{N}$.
	Without loss of generality, we can assume \,$K > 1$. Also, we can select a collection
	\,$(J_n)_{n=1}^\infty$ \,of pairwise disjoint intervals in \,$[0,1]$ \,with \,$m(J_n) = t_n$  $(n \ge 1)$.
	Define \,$f_n = \chi_{J_n}$ $(n \in \mathbb{N})$, and observe that these functions have disjoint supports. It is easy to see that \,$f_n^* = \chi_{[0,t_n]}$. Now, fix \,$n \in \mathbb{N}$ \,and note that
	\begin{equation*}
		\begin{split}
			\rho_{\varphi,w_1} \left({f_n \over K \, \|f\|_{\varphi,w_2}} \right) &= \int_0^{1} \varphi \left({f_n^*(t) \over K \, \|f\|_{\varphi,w_2}} \right) \cdot w_1(t) \,dt
			= \int_0^{t_n} \varphi \left({1 \over K \, \|f\|_{\varphi,w_2}} \right) \cdot w_1(t) \,dt \\
			&= \varphi \left({1 \over K \, \|f\|_{\varphi,w_2}} \right) \cdot \int_0^{t_n}  w_1(t) \,dt = \varphi \left({1 \over K \, \|f\|_{\varphi,w_2}} \right) \cdot W_1(t_n) \\
			&\le K \cdot \varphi \left({1 \over K \, \|f\|_{\varphi,w_2}} \right) \cdot W_2(t_n) \leq \varphi \left({1 \over \|f\|_{\varphi,w_2}} \right) \cdot W_2(t_n)\\
			&= \varphi \left({1 \over \|f\|_{\varphi,w_2}} \right) \cdot \int_0^{t_n}  w_2(t) \,dt  = \int_0^{t_n} \varphi \left({1 \over \|f\|_{\varphi,w_2}} \right) \cdot w_2(t) \,dt \\
			&=  \int_0^{1} \varphi \left({f_n^*(t) \over \|f\|_{\varphi,w_2}} \right) \cdot w_2(t) \,dt = \rho_{\varphi,w_2} \left({f_n \over \|f\|_{\varphi,w_2}} \right) = 1,
		\end{split}
	\end{equation*}
	Hence, $\|f_n\|_{\varphi,w_1} \le K \cdot \|f_n\|_{\varphi,w_2}$ \,for all \,$n \in \mathbb{N}$. This is the desired contradiction.
\end{proof}

For weight functions \,$w$ \,and \,$v$, we denote \,$w \ll v$\, if \,$\displaystyle \lim_{t\rightarrow 0^+} \frac{W(t)}{V(t)} = 0$,
where \,$\displaystyle W(t) = \int_{0}^{t} w$ \,and \,$\displaystyle V(t) = \int_{0}^{t} v$ $(t > 0)$.

From the previous theorem and Corollary \ref{lem:inclusion},
we can derive that certain subsets of Orlicz-Lorentz spaces are empty:

\begin{Theorem} \label{Thm empty difference}
	Assume that \,$\varphi$ \,is an Orlicz function satisfying the $\Delta_2^{\infty}$-condition and that \,$w$ \,is a weight function on $[0,1]$.
	Then \,$\Lambda_{\varphi,w}[0,1] \setminus \bigcup_{w \ll v} \Lambda_{\varphi,v}[0,1]$ \,is empty.
\end{Theorem}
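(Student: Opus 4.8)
The plan is to prove the stronger equality $\Lambda_{\varphi,w}[0,1]=\bigcup_{w\ll v}\Lambda_{\varphi,v}[0,1]$, from which emptiness of the difference is immediate. First I would record that each space in the union is genuinely contained in $\Lambda_{\varphi,w}[0,1]$: if $w\ll v$ then $W(t)/V(t)\to 0$ as $t\to 0^+$, so this ratio, being continuous and positive on $(0,1]$, is bounded there, i.e.\ $W(t)\le KV(t)$ for all $t$ and some $K$; Theorem~\ref{Thm Orl-Lor inclusion-2} (applied with $w_1=v$, $w_2=w$) then yields $\Lambda_{\varphi,v}[0,1]\subset\Lambda_{\varphi,w}[0,1]$. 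Thus the real content is the reverse covering: every $f\in\Lambda_{\varphi,w}[0,1]$ should lie in $\Lambda_{\varphi,v}[0,1]$ for some weight $v$ with $w\ll v$. Since $\varphi$ satisfies the $\Delta_2^\infty$-condition, $\Lambda_{\varphi,w}[0,1]$ is separable and the conclusion could also be phrased through Corollary~\ref{lem:inclusion}; but the crux is an explicit construction of such a $v$ from $f$.

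Fix $f\in\Lambda_{\varphi,w}[0,1]$, so that $\rho_{\varphi,w}(kf)=\int_0^1\varphi(kf^*)\,w<\infty$ for some $k>0$. I would consider the finite Borel measure $d\mu=(1+\varphi(kf^*(t)))\,w(t)\,dt$ on $[0,1]$ and its distribution $\Phi(t)=\mu([0,t])=\int_0^t(1+\varphi(kf^*))\,w$. Then $\Phi$ is absolutely continuous and strictly increasing (its density is $\ge w>0$ a.e.), with $\Phi(0^+)=0$ and $\Phi(1)<\infty$. I would then set $h=\Phi^{-1/2}$ and $v=w\,h$. The function $h$ is positive, decreasing, and satisfies $h(t)\to\infty$ as $t\to 0^+$, so $v$ is again positive and decreasing, hence a candidate weight.

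It remains to verify the three required properties. For local integrability, the substitution $u=\Phi(t)$ gives $\int_0^1 v\le\int_0^1 h\,d\mu=\int_0^1\Phi^{-1/2}\,d\Phi=2\sqrt{\Phi(1)}<\infty$, so $V(t)=\int_0^t v<\infty$ and $v$ is a legitimate weight. The same bound yields $\rho_{\varphi,v}(kf)=\int_0^1\varphi(kf^*)\,w\,h\le\int_0^1 h\,d\mu<\infty$, whence $f\in\Lambda_{\varphi,v}[0,1]$. Finally, since $h$ is decreasing, $V(t)=\int_0^t w\,h\ge h(t)\int_0^t w=h(t)W(t)$, so $0\le W(t)/V(t)\le 1/h(t)\to 0$ as $t\to 0^+$, i.e.\ $w\ll v$. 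Therefore $f\in\bigcup_{w\ll v}\Lambda_{\varphi,v}[0,1]$, and as $f$ was arbitrary the difference is empty.

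The main obstacle is the simultaneous fulfilment of three competing demands on $v$: it must be \emph{decreasing} (to qualify as a weight), it must grow fast enough near $0$ to force $w\ll v$, yet it must not grow so fast that $f$ leaves $\Lambda_{\varphi,v}[0,1]$ or that $v$ ceases to be locally integrable. The device $v=w\,\Phi^{-1/2}$ reconciles all of these at once: monotonicity of $\Phi^{-1/2}$ makes $v$ decreasing \emph{and}, through the elementary estimate $V\ge h\,W$, automatically produces $w\ll v$; while the exponent $-\tfrac12$ keeps $\int h\,d\mu$ finite and thereby controls both $\int_0^1 v$ and $\rho_{\varphi,v}(kf)$ by the single quantity $2\sqrt{\Phi(1)}$. (Note that the $\Delta_2^\infty$-condition is not actually needed for this covering argument; it enters only to guarantee separability, should one prefer to route the conclusion through Corollary~\ref{lem:inclusion}.)
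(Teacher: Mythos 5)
Your proof is correct and follows essentially the same route as the paper: given $f\in\Lambda_{\varphi,w}$, multiply $w$ by a decreasing function tending to $\infty$ at $0$ that is still integrable against $\varphi(kf^*)w$, and observe that the resulting weight $v$ satisfies $w\ll v$ because $V(t)\ge h(t)W(t)$. The only difference is cosmetic: the paper merely asserts the existence of such a factor $g$, whereas you instantiate it explicitly as $h=\Phi^{-1/2}$ with $\Phi(t)=\int_0^t(1+\varphi(kf^*))w$, which cleanly verifies local integrability, $f\in\Lambda_{\varphi,v}$, and $w\ll v$ in one stroke.
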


\begin{proof}
	We claim that for a given weight function $w$ and $f \in \Lambda_{\varphi,w}[0,1]$, there exists a weight function $v$ such that $w \ll v$ and $f \in \Lambda_{\varphi,v}[0,1]$. From the fact that $\varphi(f^*)w \in L_1[0,1]$, we can choose \,$g$ \,nonnegative and decreasing such that \,$\lim_{t\rightarrow 0^+} g(t) = \infty$ \,and
	\begin{equation}\label{eq11}
		\int_{0}^{1} \varphi(f^*)\cdot g\cdot w < \infty.
	\end{equation}
	
	Now, define the function $v(s) := g(s)w(s)$ on $[0,1]$. The function $v$ is decreasing, positive and locally integrable, so it is a weight function.
	We can also see that $f \in \Lambda_{\varphi,v}$ from (\ref{eq11}). Furthermore, we have:
	\[
	0 \le \limsup_{t \rightarrow 0^+} \frac{W(t)}{V(t)} = \limsup_{t \rightarrow 0^+} \frac{W(t)}{\int_0^t g(s)w(s) \, ds} \leq \limsup_{t \rightarrow 0^+} \frac{W(t)}{g(t)W(t)} = \lim_{t \rightarrow 0^+} \frac{1}{g(t)} = 0
	\]
	and so \,$w \ll v$.
	Since each element of $f \in \Lambda_{\varphi,w}[0,1]$ belongs to $\Lambda_{\varphi,v}[0,1]$ for some weight $v$ satisfying $w \ll v$, we see that $\Lambda_{\varphi,w}[0,1] \subset \bigcup_{w \ll v} \Lambda_{\varphi,v}[0,1]$. Moreover, in view of Theorems~\ref{Thm Orl-Lor inclusion-2} and~\ref{Thm Orl-Lor inclusion-2-DSS}, the inclusion $J_v:\Lambda_{\varphi,v} \rightarrow \Lambda_{\varphi,w}$ is well-defined for each $v$ satisfying $w \ll v$, so $\bigcup_{w \ll v} \Lambda_{\varphi,v}[0,1] \subset \Lambda_{\varphi,w}[0,1]$. This shows that $\Lambda_{\varphi,w}[0,1] \setminus \bigcup_{w \ll v} \Lambda_{\varphi,v}[0,1]$ is empty.
\end{proof}

As a direct consequence, we have the analogous statement for the Lorentz space $\Lambda_{p,w}$ where $1 \leq p < \infty$.

\begin{Corollary}\label{cor:Lorentz-empty}
	Let \,$w$ be a weight function on $[0,1]$ and $1 \leq p < \infty$. Then the set \,$\Lambda_{p,w}[0,1] \setminus \bigcup_{w \ll v} \Lambda_{p,v}[0,1]$ is empty.
\end{Corollary}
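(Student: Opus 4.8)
The plan is to deduce \Cref{cor:Lorentz-empty} as the special case of \Cref{Thm empty difference} obtained by taking the Orlicz function $\varphi(u) = u^p$. First I would verify that $\varphi(u) = u^p$ is genuinely an Orlicz function satisfying the $\Delta_2^\infty$-condition for every $1 \le p < \infty$: it is convex on $[0,\infty)$, vanishes at $0$ with $\lim_{u\to 0}\varphi(u)=0$, is strictly positive for $u>0$, and satisfies $\varphi(2u) = 2^p\varphi(u)$, so the $\Delta_2^\infty$-condition (indeed the full $\Delta_2$-condition) holds with $K = 2^p$ and any $u_0 \ge 0$. With this verification in hand, every hypothesis of \Cref{Thm empty difference} is met.

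\begin{proof}
	Fix $1 \le p < \infty$ and set $\varphi(u) = u^p$. Then $\varphi$ is a convex function on $[0,\infty)$ with $\varphi(0) = 0 = \lim_{u\to 0}\varphi(u)$ and $\varphi(u) > 0$ for all $u > 0$, so $\varphi$ is an Orlicz function. Moreover $\varphi(2u) = 2^p u^p = 2^p\varphi(u)$ for every $u \ge 0$, so with $K = 2^p > 1$ (when $p \ge 1$, note $K \ge 2$) and any $u_0 \ge 0$ the inequality $\varphi(2u) \le K\varphi(u)$ holds; hence $\varphi$ satisfies the $\Delta_2^\infty$-condition. By definition of the Orlicz-Lorentz space, $\Lambda_{\varphi,w}[0,1] = \Lambda_{p,w}[0,1]$ and $\Lambda_{\varphi,v}[0,1] = \Lambda_{p,v}[0,1]$ for every weight $v$. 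Consequently, \Cref{Thm empty difference} applied to this $\varphi$ yields that
	\[
	\Lambda_{p,w}[0,1] \setminus \bigcup_{w \ll v} \Lambda_{p,v}[0,1] = \Lambda_{\varphi,w}[0,1] \setminus \bigcup_{w \ll v} \Lambda_{\varphi,v}[0,1]
	\]
	is empty, as claimed.
\end{proof}

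\noindent There is essentially no obstacle to overcome here: the only point requiring care is the elementary check that $u^p$ verifies the growth condition demanded by \Cref{Thm empty difference}, and this is immediate from the homogeneity $\varphi(\lambda u) = \lambda^p\varphi(u)$. Everything else is a direct specialization, since the identification $\Lambda_{u^p,w} = \Lambda_{p,w}$ was already recorded in \Cref{sec:preliminaries} immediately after the definition of the Orlicz-Lorentz space.
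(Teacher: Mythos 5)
Your proof is correct and is exactly the argument the paper intends: \Cref{cor:Lorentz-empty} is stated there as a direct consequence of \Cref{Thm empty difference} via the specialization $\varphi(u)=u^p$, which is an Orlicz function satisfying the $\Delta_2^\infty$-condition, together with the identification $\Lambda_{u^p,w}=\Lambda_{p,w}$ recorded in Section~2. The only cosmetic point is that the paper's definition of the $\Delta_2^\infty$-condition requires $K>2$, so for $p=1$ you should take, say, $K=3$ rather than $K=2^p=2$; the inequality $\varphi(2u)\le K\varphi(u)$ of course still holds.
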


\subsection{Inclusion operators between Orlicz-Lorentz spaces with a fixed weight $w$} In this subsection, we provide a characterization of inclusion operators between Orlicz-Lorentz spaces. This fact was provided in \cite[Theorem 1.4]{K} with hints for the proof. Here, we present a different proof inspired by \cite[Theorem 5.1.3]{RR}.

\begin{Theorem} \label{Thm Orl-Lor inclusion}
	Let $\varphi_1$ and $\varphi_2$ be Orlicz functions and let $w$ be a weight function on $I = [0, \gamma)$ with $\gamma \leq \infty$. The following statements are equivalent:
	\begin{enumerate}[\rm(i)]
		\item $\varphi_1 \prec \varphi_2$ {\rm (}resp.~$\varphi_1 \prec_{\infty} \varphi_2${\rm )} \,for \,$\gamma = \infty$ \,{\rm (}resp.~$\gamma < \infty${\rm ).}
		\item $\Lambda_{\varphi_2,w} \subset \Lambda_{\varphi_1,w}$, the inclusion being continuous.
	\end{enumerate}
\end{Theorem}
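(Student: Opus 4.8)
The plan is to translate the pointwise order relation between $\varphi_1$ and $\varphi_2$ into a comparison of the modulars $\rho_{\varphi_i,w}$, and hence of the Luxemburg norms, for the implication (i) $\implies$ (ii), and conversely to recover the order relation by testing the continuous inclusion on the family of characteristic functions $\chi_{[0,t]}$, whose norms are computed explicitly through Lemma~\ref{lem:inverse of W(t)}. Throughout I would use that each $\varphi_i$ is a continuous, strictly increasing bijection of $[0,\infty)$, so that $\varphi_i^{-1}$ is available, together with the elementary reformulation that $\varphi_1(u)\le\varphi_2(bu)$ is the same as $\varphi_1(f^*/(b\varepsilon))\le\varphi_2(f^*/\varepsilon)$ once we set $u=f^*/(b\varepsilon)$.

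For (i) $\implies$ (ii) when $\gamma=\infty$, given $f\in\Lambda_{\varphi_2,w}$ I would substitute $\varphi_1(f^*/(b\varepsilon))\le\varphi_2(f^*/\varepsilon)$ inside the modular integral to get $\rho_{\varphi_1,w}(f/(b\varepsilon))\le\rho_{\varphi_2,w}(f/\varepsilon)$; taking infima over $\varepsilon>\|f\|_{\varphi_2,w}$ yields both the inclusion and the estimate $\|f\|_{\varphi_1,w}\le b\,\|f\|_{\varphi_2,w}$. When $\gamma<\infty$ and only $\varphi_1\prec_\infty\varphi_2$ holds, so that $\varphi_1(u)\le\varphi_2(bu)$ is available merely for $u\ge u_0$, I would split the integration interval into the set where $f^*/(b\varepsilon)\ge u_0$, on which the previous bound applies, and its complement, on which $\varphi_1(f^*/(b\varepsilon))\le\varphi_1(u_0)$; the latter contributes at most $\varphi_1(u_0)\,W(\gamma)$, which is finite since a decreasing locally integrable weight on a bounded interval satisfies $W(\gamma)<\infty$. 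This gives $\rho_{\varphi_1,w}(f/(b\varepsilon))\le 1+\varphi_1(u_0)W(\gamma)=:D$, and a final renormalization using convexity (so that $\varphi_1(\theta u)\le\theta\varphi_1(u)$ for $\theta\in(0,1)$, applied with $\theta=1/D$) upgrades this to $\rho_{\varphi_1,w}(f/(Db\varepsilon))\le 1$, i.e. the continuous inclusion with constant $Db$.

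For (ii) $\implies$ (i) I would exploit the continuity constant $C$ in $\|f\|_{\varphi_1,w}\le C\|f\|_{\varphi_2,w}$ and test it on $f=\chi_{[0,t]}$. By Lemma~\ref{lem:inverse of W(t)} one has $\|\chi_{[0,t]}\|_{\varphi_i,w}=1/\varphi_i^{-1}(1/W(t))$, so the norm inequality becomes $\varphi_2^{-1}(1/W(t))\le C\,\varphi_1^{-1}(1/W(t))$ for every $t\in(0,\gamma)$. As $t$ runs through $(0,\gamma)$, the argument $s:=1/W(t)$ runs through all of $(0,\infty)$ when $\gamma=\infty$ (here $W(\infty)=\infty$ is used) and through $(1/W(\gamma),\infty)$ when $\gamma<\infty$. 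Writing $u=\varphi_1^{-1}(s)$ and applying the increasing function $\varphi_2$ to $\varphi_2^{-1}(\varphi_1(u))\le Cu$ gives $\varphi_1(u)\le\varphi_2(Cu)$ for all $u\ge 0$ in the first case, and for all $u\ge u_0:=\varphi_1^{-1}(1/W(\gamma))$ in the second; that is, $\varphi_1\prec\varphi_2$ (resp.~$\varphi_1\prec_\infty\varphi_2$) with $b=C$.

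The direct modular estimate is routine; the main obstacle is the bookkeeping in the finite-interval case, where the threshold $u_0$ must be matched on both sides so that precisely the eventual versions $\prec_\infty$ and $\Delta_2^\infty$ of the hypotheses are used and recovered. Concretely, one must correctly truncate the range of $s=1/W(t)$ to $(1/W(\gamma),\infty)$ in (ii) $\implies$ (i), and absorb the finite small-value contribution $\varphi_1(u_0)W(\gamma)$ through the convexity renormalization in (i) $\implies$ (ii). Neither step is deep, but both require care so as not to claim a global inequality where only an eventual one is genuinely available.
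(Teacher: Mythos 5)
Your proof is correct, and the forward implication (i) $\Rightarrow$ (ii) follows essentially the same route as the paper: a direct modular comparison for $\prec$, and, for $\prec_\infty$, a split of the integral at the threshold $u_0$ followed by a convexity renormalization that absorbs the finite small-value contribution (the paper uses the constant $M=\varphi_1(u_0)\left(W(\gamma)-W(\gamma-m(E))\right)+1$, which is dominated by your uniform $D=1+\varphi_1(u_0)W(\gamma)$, so your version is if anything cleaner). The converse is where you genuinely diverge. The paper argues by contraposition: if $\varphi_1\not\prec_\infty\varphi_2$, it extracts a sequence $(a_n)$ with $\varphi_1(a_n)>\varphi_2(2^n n^2 a_n)$ and builds a decreasing step function $f=\sum_n n a_n\chi_{[t_n,t_{n-1}]}$ satisfying $\rho_{\varphi_2,w}(f)<1$ while $\rho_{\varphi_1,w}(\varepsilon f)=\infty$ for every $\varepsilon>0$, so that already the set inclusion $\Lambda_{\varphi_2,w}\subset\Lambda_{\varphi_1,w}$ fails. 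You instead test the norm inequality on the indicators $\chi_{[0,t]}$ and invert via Lemma~\ref{lem:inverse of W(t)}, obtaining $\varphi_2^{-1}(s)\le C\,\varphi_1^{-1}(s)$ on the exact range of $s=1/W(t)$ and hence $\varphi_1(u)\le\varphi_2(Cu)$ with the explicit constant $b=C$; your bookkeeping of that range ($s$ covering $(0,\infty)$ when $W(\infty)=\infty$, and $(1/W(\gamma),\infty)$ with $u_0=\varphi_1^{-1}(1/W(\gamma))$ when $\gamma<\infty$) is precisely what separates the recovery of $\prec$ from that of $\prec_\infty$. The trade-off: your argument is shorter and quantitative, but it uses the continuity hypothesis in (ii) in an essential way, whereas the paper's construction establishes the formally stronger fact that mere set inclusion already forces (i) (continuity being recoverable afterwards by the closed graph theorem, as the paper remarks in a similar situation). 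Both are complete proofs of the stated equivalence.
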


\begin{proof}
	(i) $\implies$ (ii): First, assume that $\varphi_1 \prec \varphi_2$ and let $f \in \Lambda_{\varphi_2,w}$. Then there exists $b > 0$ such that $\varphi_1(u) \leq \varphi_2(bu)$ for all $u \geq 0$. Since Orlicz-Lorentz spaces are rearrangement invariant and $\rho_{\varphi_2,w}\left(\frac{f}{\|f\|_{\varphi_2,w}}\right)\leq 1$, we have
	\[
	\rho_{\varphi_1, w}\left(\frac{f}{b\|f\|_{\varphi_2,w}}\right) = \int_I \varphi_1\left(\frac{f^*}{b \|f\|_{\varphi_2, w}}\right)w \leq \int_I \varphi_2\left(\frac{f^*}{ \|f\|_{\varphi_2, w}}\right)w \leq 1.
	\]
	Hence, $\|f\|_{\varphi_1,w} \leq b \cdot \|f\|_{\varphi_2,w}$ \,and so \,$\Lambda_{\varphi_2,w} \subset \Lambda_{\varphi_1,w}$, the inclusion being continuous.
	
	\vskip 3pt
	
	Now, assume that $\varphi_1 \prec_{\infty} \varphi_2$ and let $f \in \Lambda_{\varphi_2,w}$ such that $\|f\|_{\varphi_2,w} > 0$. Then there exists $b > 0$ and $u_0 > 0$ such that $\varphi_1(u) \leq \varphi_2 (bu)$ for all $u \geq u_0$. Now, define a set $E = \{t \in I: f^*(t) \leq u_0 b \|f\|_{\varphi_2, w}\}$. As a matter of fact, the set \,$E$ \,is the interval $[\gamma - m(E), \gamma)$.  Let $M = \varphi_1(u_0)(W(\gamma) - W(\gamma - m(E))) + 1$. Since Orlicz functions are convex and $\rho_{\varphi_2,w}\left(\frac{f}{\|f\|_{\varphi_2,w}}\right) \leq 1$, we obtain that
	\begin{align*}
		\rho_{\varphi_1, w}\left(\frac{f}{Mb\|f\|_{\varphi_2,w}}\right) &= \rho_{\varphi_1, w}\left(\frac{f^*}{Mb\|f\|_{\varphi_2,w}}\right)= \int_I \varphi_1 \left(\frac{f^*}{Mb\|f\|_{\varphi_2,w}}\right)w \\
		&\leq \frac{1}{M}\left(\int_{\gamma - m(E)}^{\gamma}\varphi_1\left(\frac{f^*}{b\|f\|_{\varphi_2,w}}\right)w + \int_I\varphi_1\left(\frac{f^*}{b\|f\|_{\varphi_2,w}}\right)w\right)\\
		&\leq \frac{1}{M}\left(\varphi_1(u_0)(W(\gamma) - W(\gamma - m(E))) + \int_I\varphi_2\left(\frac{f^*}{\|f\|_{\varphi_2,w}}\right)w\right)\\
		&\leq \frac{1}{M} \cdot M = 1.
	\end{align*}
	Hence, $\|f\|_{\varphi_1,w} \leq b \cdot \|f\|_{\varphi_2,w}$ and so \,$\Lambda_{\varphi_2,w} \subset \Lambda_{\varphi_1,w}$ \,in this case, the inclusion being continuous.
	
	\vskip 3pt
	
	\noindent (ii) $\implies$ (i): Since we can always find a finite measurable subset in $I$, showing the claim for the case $\gamma < \infty$ is enough. Let \,$E$ \,be a measurable subset of $I$ such that $0 < m(E) < \infty$. Suppose that $\varphi_1 \not\prec_\infty \varphi_2$. Then we can find an increasing sequence of real numbers $(a_n)_{n=1}^{\infty}\uparrow \infty$ such that $\varphi_1(a_n) > \varphi_2(2^n n^2a_n)$ for every $n \in \mathbb{N}$. Then by the convexity of the Orlicz function $\varphi_2$, we get that
	\begin{equation} \label{eq12}
		\varphi_1(a_n) > \varphi_2(2^n n^2 a_n) \geq 2^n \varphi_2 (n^2 a_n) \hbox{ \ for all } \, n \in \N. 	
	\end{equation}

	Passing to a subsequence if necessary, the sequence $(a_n)_{n=1}^{\infty}$ can be chosen such that
	\[
	\sum_{n=1}^{\infty} \frac{1}{2^n \varphi_2(n^2a_n)} \le \int_0^{m(E)} w. 
	\]
	Let \,$t_0$ \,be a positive real number such that
	$$
	\sum_{n=1}^{\infty} \frac{1}{2^n \varphi_2(n^2a_n)} = \int_0^{t_0} w.
	$$
	Then there exists a sequence \,$(t_n)_{n=1}^{\infty}\downarrow 0$ \,such that
	\[
	\int_{t_n}^{t_{n-1}} w = \frac{1}{2^n\varphi_2(n^2 a_n)} \hbox{ \ for all \ } n \in \N.
	\]
	
	Now, define the function
	\[
	f := \sum_{n=1}^{\infty} n a_n \chi_{[t_n, t_{n-1}]}.
	\]
	Since \,$f$ \,is nonincreasing, we have \,$f = f^*$, and so
	\[
	\rho_{\varphi_2, w}(f) = \sum_{n=1}^{\infty} \varphi_2(na_n) \int_{t_{n}}^{t_{n-1}} w = \sum_{n=1}^{\infty} \frac{\varphi_2(na_n)}{2^n \varphi_2(n^2 a_n)} < \sum_{n=1}^{\infty} \frac{1}{2^n} = 1.
	\]
	Hence, $f \in \Lambda_{\varphi_2,w}$. On the other hand, choose \,$n_0 \in \N$ \,such that \, $n_0 \var > 1$. Then from \eqref{eq12} we obtain
	\begin{align*}
		\rho_{\varphi_1,w}(\var f) &\geq \sum_{n=n_0 + 1}^{\infty} \varphi_1 (\var na_n) \int_{t_{n}}^{t_{n-1}} w\\
		&\geq \sum_{n=n_0 + 1}^{\infty} \varphi_1 (a_n) \int_{t_{n}}^{t_{n-1}} w\\
		& \geq  \sum_{n=n_0 + 1}^{\infty} \frac{2^n\varphi_2(n^2 a_n)}{2^n\varphi_2(n^2 a_n)} = \infty.
	\end{align*}
	Therefore, $f \notin \Lambda_{\varphi_1,w}$ and so $\Lambda_{\varphi_2,w} \not\subset \Lambda_{\varphi_1,w}$.
\end{proof}

{\color{blue}

}


\section{Spaceability and residuality on Orlicz-Lorentz function spaces}
In this section, we shall analyze the spaceability and residuality of various nonlinear subsets of Orlicz-Lorentz function spaces.

Before presenting our first result, we mention that the measure space for Orlicz spaces $L_{\varphi}(\mu)$ in \cite{AM} is an abstract measure space with a positive measure $\mu$. Hence, in this case, we need to consider the conditions $(\alpha)$ and $(\beta)$ defined by
\begin{enumerate}
	\item[($\alpha$)] $\inf\{\mu (A) : A \in \Sigma,\ \mu (A) > 0\} = 0$.
	\item[($\beta$)] $\sup\{\mu (A) : A \in \Sigma,\ \mu (A) < \infty\} = \infty$.
\end{enumerate}
However, we only consider the Lebesgue measure on $I$ for Orlicz-Lorentz spaces, so the condition $(\alpha)$ is automatically given because the measure is nonatomic. The condition $(\beta)$ is satisfied only in the case of $I = [0, \infty)$.

\vskip 3pt

The following result holds for both bounded and unbounded \,$I = [0, \gamma)$, $\gamma \leq \infty$.

\begin{Theorem}\label{th:OLspaceable1}
Assume that \,$w$ \,is a weight function on \,$I=[0,\gamma)$ with $\gamma \leq \infty$. Let $\varphi$, $\varphi_n$ $(n \geq 1)$ be Orlicz functions such that each \,$\varphi_n$ \,satisfies the $\Delta_\varphi(\infty)$-condition. Then the set \,$(\Lambda_{\varphi,w})_a \setminus \bigcup_{n=1}^\infty \Lambda_{\varphi_n,w}$ is $\mathfrak c$-spaceable in \,$(\Lambda_{\varphi,w})_a$.
Moreover, if $\varphi$ satisfies the $\Delta_2$-condition when $\gamma = \infty$ (resp. $\Delta_2^\infty$-condition when $\gamma < \infty$), then \,$\Lambda_{\varphi,w} \setminus \bigcup_{n=1}^\infty \Lambda_{\varphi_n,w}$ is maximal-spaceable in \,$\Lambda_{\varphi,w}$.
\end{Theorem}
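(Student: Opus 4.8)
The plan is to deduce both statements from the PCS machinery of Theorem~\ref{th:pcs}. For the first assertion I would take $X=(\Lambda_{\varphi,w})_a$, which is itself a rearrangement invariant Banach function space and hence a PCS-space for which conditions (i) and (ii) of Theorem~\ref{th:pcs} hold automatically, and set $B=\bigl(\bigcup_{n=1}^\infty \Lambda_{\varphi_n,w}\bigr)\cap(\Lambda_{\varphi,w})_a\subset X$. Conditions (iii) and (iv) are then routine: $B$ is a cone because each $\Lambda_{\varphi_n,w}$ is a vector space, and (iv) follows from solidity — if $f,g\in X$ have disjoint supports and $f+g\in\Lambda_{\varphi_n,w}$, then $|f|\le|f+g|$ forces $f\in\Lambda_{\varphi_n,w}$ (and likewise $g$), since $\Lambda_{\varphi_n,w}$ is an order-ideal; hence $f,g\in B$.

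The crux is condition (v): producing a disjointly supported sequence $(f_n)$ with each $f_n\in X\setminus B$, that is, $f_n\in(\Lambda_{\varphi,w})_a$ but $f_n\notin\Lambda_{\varphi_m,w}$ for every $m$. I would first build a single nonincreasing profile $h=\sum_{j}c_j\chi_{(s_{j+1},s_j]}$ supported on a small interval $[0,\delta)$, with heights $c_j\uparrow\infty$ and weight-masses $\omega_j=W(s_j)-W(s_{j+1})$, such that (a) $\rho_{\varphi,w}(kh)=\sum_j\varphi(kc_j)\omega_j<\infty$ for every $k>0$ (so $h\in(\Lambda_{\varphi,w})_a$ via the description $(\Lambda_{\varphi,w})_a=\{f:\rho_{\varphi,w}(kf)<\infty\ \forall k>0\}$), while (b) $\rho_{\varphi_m,w}(kh)=\sum_j\varphi_m(kc_j)\omega_j=\infty$ for every $m$ and every $k>0$. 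The engine behind (b) is the strong domination extracted from the $\Delta_\varphi(\infty)$-condition: for each fixed $m$ and each $b>0$ one has $\varphi(u)\le\varphi_m(bu)$ for all large $u$, so $\varphi_m$ eventually dominates every dilation $\varphi(N\,\cdot)$ of $\varphi$; combined with convexity this yields $\varphi_m(kc_j)/\varphi(c_j)\to\infty$ for each fixed $m,k$. Choosing the heights to grow fast and distributing the escape requirement over $m$ by a diagonal scheme — each $m$ handled along an infinite subsequence of indices $j$ — one forces every $\varphi_m$-modular to diverge for all $k$ (the binding case being small $k$) while keeping the $\varphi$-modular finite for all $k$ (the binding case being large $k$). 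I expect this simultaneous calibration of (a) and (b), which exploits the gap between the critical growth rate of $\varphi$ and those of the $\varphi_m$, to be the main technical obstacle, precisely because no $\Delta_2$-assumption on $\varphi$ is available in this part.

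Once such a profile is available on an arbitrarily small support, I would fix a pairwise disjoint family of subintervals $I_n\subset I$ and realize on $I_n$ a copy $f_n$ of such a profile (letting the support sizes shrink so that the $I_n$ fit inside $I$ when $\gamma<\infty$). Since the modular depends only on the decreasing rearrangement and $f_n^*=h^*$, each $f_n$ inherits properties (a) and (b), hence lies in $X\setminus B$, and the supports are disjoint by construction; thus Theorem~\ref{th:pcs} delivers the $\mathfrak c$-spaceability of $(\Lambda_{\varphi,w})_a\setminus\bigcup_{n}\Lambda_{\varphi_n,w}$. For the moreover-part, observe that the appropriate $\Delta_2$-condition forces $\Lambda_{\varphi,w}=(\Lambda_{\varphi,w})_a$ (finiteness of $\rho_{\varphi,w}(kf)$ for some $k$ upgrades to all $k$), so the $\mathfrak c$-spaceable subspace just produced already lies in $\Lambda_{\varphi,w}$; since the same $\Delta_2$-condition makes $\Lambda_{\varphi,w}$ separable, its algebraic dimension equals $\mathfrak c=\dim(\Lambda_{\varphi,w})$, whence $\mathfrak c$-spaceability coincides with maximal-spaceability.
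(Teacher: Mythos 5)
Your proposal follows essentially the same route as the paper: both invoke Theorem~\ref{th:pcs} with $X=(\Lambda_{\varphi,w})_a$, reduce everything to condition (v), and build disjointly supported decreasing step functions whose heights escape to infinity while the weight masses are calibrated as reciprocals of $2^k\varphi(k^2u_k)$, using exactly the inequality $\varphi_n(u_{n,k})\ge 2^k\varphi(k^2u_{n,k})$ extracted from the $\Delta_\varphi(\infty)$-condition plus convexity; the paper merely distributes the requirements for the different $\varphi_n$ over disjoint blocks $E_n$ of the support (summing $f=\sum_n f_n/(2^n\|f_n\|)$ and using orthogonal subadditivity of the modular) rather than your interleaved diagonal scheme, and the moreover-part is handled identically via separability under the $\Delta_2$-condition. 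The calibration you flag as the main obstacle is carried out in the paper exactly as you sketch it, so the proposal is correct.
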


\begin{proof}
We only show this for the case $\gamma = \infty$ because the finite case is similar. Let
\,$X := (\Lambda_{\varphi,w})_a$ \,and \,$B := \bigcup_{n=1}^\infty \Lambda_{\varphi_n,w}$.
As we mentioned, conditions (i) and (ii) in Theorem \ref{th:pcs} are given immediately. Since the order-continuous subspace of a Banach function space is a cone, the condition (iii) in Theorem \ref{th:pcs} holds. From the fact that the order-continuous subspace of a Banach function space is a closed order-ideal, we get the condition (iv) in Theorem \ref{th:pcs}. Hence, it suffices to show that if $E \in \Sigma$ with $m(E) > 0$, then there exists a function \,$f$ \,in $X \setminus B$ whose support is $E$. Indeed, by choosing pairwise disjoint measurable sets \,$S_j \subset I$ \,($j \in \N$) \,with \,$m(S_j) > 0$ \,would do the job of fulfilling (v) in Theorem 2.4.

\vskip 3pt
	
Since $\varphi_n$ satisfies $\Delta_{\varphi}(\infty)$ for all $n \in \mathbb{N}$, there exists an increasing sequence of real numbers $(u_{n,k})_{k=1}^{\infty} \uparrow \infty$ such that
\begin{equation}
	\varphi_n(u_{n,k}) \geq 2^k \varphi(k^2 u_{n,k}).
\end{equation}

\vskip 3pt

Let $E \in \Sigma$ such that $m(E) = t_0 < \infty$ and let $(E_n)_{n=1}^{\infty}$ be a partition of $E$ such that $m(E_n) = \frac{t_0}{2^n}$. By passing to a subsequence if necessary, we can choose the sequence $(u_{n,k})_{k=1}^{\infty}$ such that
\begin{equation}
	\sum_{k=1}^{\infty}\frac{1}{2^k \varphi(k^2 u_{n,k})} = \int_0^{t_0/2^n}w.
\end{equation}
Then, we can find a decreasing sequence $(t_{n,k})_{k=1}^{\infty} \downarrow 0$ of real numbers such that, for every $k \in \mathbb{N}$, it holds that
\begin{equation}\label{eq:wpieces1}
	\int_{t_{n,k+1}}^{t_{n,k}}w = \frac{1}{2^k\varphi(k^2u_{n,k})}.
\end{equation}
Furthermore, we can construct a partition $(E_{n,k})_{k=1}^{\infty}$ of $E_n$ such that $m(E_{n,k}) = t_{n, k} - t_{n,k+1}$ by using the nonatomicity of the Lebesgue measure.
\vskip 3pt

Now, define the function \,$f_n : I \to \R$ \,by
\[	
f_n = \sum_{k=1}^{\infty} k u_{n,k} \chi_{E_{n,k}}.
\]
We see that $f_n^* = \sum_{k=1}^{\infty}ku_{n,k} \chi_{[t_{n,k+1}, t_{n,k})}$ for every $n \in \mathbb{N}$.
Let
\[
f := \sum_{n=1}^{\infty} \frac{f_n}{2^n \|f_n\|}.
\]
For every $\lambda > 0$, by the orthogonal subadditivity and convexity of $\rho_{\varphi,w}$ and (\ref{eq:wpieces1}) there exists $k_n > \frac{\lambda}{\|f_n\|}$ such that
\begin{eqnarray*}
	\rho_{\varphi,w}(\lambda f) &\leq& \sum_{n=1}^{\infty}\frac{1}{2^n}\cdot \rho_{\varphi,w}\left(\frac{\lambda f_n}{\|f_n\|}\right) = \sum_{n=1}^{\infty}\frac{1}{2^n}\cdot\left(\sum_{k=1}^{\infty} \varphi\left(\frac{\lambda k u_{n,k}}{\|f_n\|}\right)\int_{t_{n,k+1}}^{t_{n,k}}w\right)\\
	&=&\sum_{n=1}^{\infty}\frac{1}{2^n}\cdot\left(\sum_{k=1}^{k_n}\varphi\left(\frac{\lambda k u_{n,k}}{\|f_n\|}\right) \int_{t_{n,k+1}}^{t_{n,k}}w + \sum_{k= k_n + 1}^{\infty}\varphi\left(\frac{\lambda k u_{n,k}}{\|f_n\|}\right) \int_{t_{n,k+1}}^{t_{n,k}}w\right)\\
	&\leq& \sum_{n=1}^{\infty}\frac{1}{2^n}\cdot\left( \varphi(k_n^2 u_{n,k_n}) \sum_{k=1}^{k_n}\int_{t_{n,k+1}}^{t_{n,k}}w +  \sum_{k= k_n + 1}^{\infty}\varphi(k^2 u_{n,k}) \int_{t_{n,k+1}}^{t_{n,k}}w\right)\\
	&\leq&
	\sum_{n=1}^{\infty}\frac{1}{2^n}\cdot\left( \varphi(k_n^2 u_{n,k_n}) \sum_{k=1}^{k_n}\int_{t_{n,k+1}}^{t_{n,k}}w +  \sum_{k= k_n + 1}^{\infty}\frac{1}{2^k}\right).
\end{eqnarray*}
Moreover, notice from (\ref{eq:wpieces1}) that $\int_{t_{n,k_n+1}}^{t_{n,k_n}}w \geq \int_{t_{n,k+1}}^{t_{n,k}}w$ for every $k < k_n$. Hence, we obtain
\begin{eqnarray*}
	\rho_{\varphi,w}(\lambda f) &\leq& \sum_{n=1}^{\infty}\frac{1}{2^n}\cdot\left( \varphi(k_n^2 u_{n,k_n}) \sum_{k = 1}^{k_n}\int_{t_{n,k+1}}^{t_{n,k}}w +  \sum_{k= k_n + 1}^{\infty}\frac{1}{2^k}\right)\\
	&\leq&\sum_{n=1}^{\infty}\frac{1}{2^n}\cdot\left(\frac{k_n}{2^{k_n}} + \sum_{k= k_n + 1}^{\infty}\frac{1}{2^k}\right) \leq 1 < \infty.
\end{eqnarray*}
This shows that $f \in X = (\Lambda_{\varphi,w})_a$. On the other hand, for every $\varepsilon > 0$ and for each $n \in \mathbb{N}$, if $\frac{\varepsilon k_n}{2^n \|f_n\|} > 1$ for some $k_n \in \mathbb{N}$, we have by the monotonicity of $\rho_{\varphi_n,w}$ that
\[\rho_{\varphi_n,w}(\varepsilon f)\geq \rho_{\varphi_n,w}\left(\frac{\varepsilon f_n}{2^n \|f_n\|}\right) \geq \sum_{k = k_n + 1}^{\infty} \varphi_n(u_{n,k}) \int_{t_{n,k+1}}^{t_{n,k}}w\\
\geq \sum_{k= k_n + 1}^{\infty} \frac{2^k\varphi(k^2 u_{n,k})}{2^k\varphi(k^2 u_{n,k})} = \infty.
\]
Thus, $f \in X \setminus B = (\Lambda_{\varphi,w})_a \setminus \bigcup_{n=1}^{\infty}\Lambda_{\varphi_n,w}$.


For the second part, assume that $\gamma=\infty$ and $\varphi$ satisfies the $\Delta_2$-condition (the case when $\gamma < \infty$ and $\varphi$ satisfies the $\Delta_2^\infty$-condition can be done in a similar way).
Since $\Lambda_{\varphi,w} = (\Lambda_{\varphi,w})_a $ is an infinite-dimensional separable Banach space, it is well-known that $\dim (\Lambda_{\varphi,w}) = \mathfrak c$.
Hence, by the first part, \,$\Lambda_{\varphi,w} \setminus \bigcup_{n=1}^{\infty}\Lambda_{\varphi_n,w}$\, is maximal-spaceable in \,$\Lambda_{\varphi,w}$.
The proof is finished.
\end{proof}

As a consequence we obtain the following corollary, but in order to prove it let us prove first the following lemma.

\begin{Lemma}\label{lem:strongorder1}
	Let $I = [0, \gamma)$ with $\gamma \leq \infty$, $\varphi$ and $\varphi_n$ $(n \geq 1)$ be Orlicz functions, and $w$ be a weight function on $I$.
	If $\varphi_{n+1} \prec \varphi_n$ {\rm (}resp.~$\varphi_{n+1} \prec_{\infty} \varphi_n${\rm )} \,for \,$\gamma = \infty$ \,{\rm (}resp.~$\gamma < \infty${\rm )} for every $n\in \mathbb N$, then $\bigcup_{n=1}^\infty \Lambda_{\varphi_n,w}$ is a nontrivial vector subspace of \,$(\Lambda_{\varphi,w})_a$.
\end{Lemma}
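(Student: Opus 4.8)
The plan is to turn the order relations among the \,$\varphi_n$\, into inclusions of the corresponding spaces and then to use that an increasing union of subspaces is again a subspace. First I would apply Theorem~\ref{Thm Orl-Lor inclusion} with the two Orlicz functions there taken to be \,$\varphi_{n+1}$\, and \,$\varphi_n$: the hypothesis $\varphi_{n+1}\prec\varphi_n$ (resp.~$\varphi_{n+1}\prec_{\infty}\varphi_n$) is precisely equivalent to the continuous inclusion $\Lambda_{\varphi_n,w}\subseteq\Lambda_{\varphi_{n+1},w}$. Letting $n$ range over $\N$ then produces the increasing chain
\[
\Lambda_{\varphi_1,w}\subseteq\Lambda_{\varphi_2,w}\subseteq\Lambda_{\varphi_3,w}\subseteq\cdots,
\]
so the family $(\Lambda_{\varphi_n,w})_{n=1}^{\infty}$ is totally ordered by inclusion. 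The only thing to be careful about at this stage is the \emph{direction} of the inclusion furnished by Theorem~\ref{Thm Orl-Lor inclusion}: the dominated Orlicz function yields the larger space, which is why $\varphi_{n+1}\prec\varphi_n$ gives $\Lambda_{\varphi_n,w}$ \emph{inside} $\Lambda_{\varphi_{n+1},w}$.

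Granting the chain, the subspace property is routine. Writing $Y:=\bigcup_{n=1}^{\infty}\Lambda_{\varphi_n,w}$, for $f,g\in Y$ and scalars $\lambda,\mu$ I would pick indices $m,n$ with $f\in\Lambda_{\varphi_m,w}$ and $g\in\Lambda_{\varphi_n,w}$ and set $N=\max\{m,n\}$; since the chain is increasing, $f,g\in\Lambda_{\varphi_N,w}$, whence $\lambda f+\mu g\in\Lambda_{\varphi_N,w}\subseteq Y$. Thus $Y$ is a vector subspace. Nontriviality is immediate, as each $\Lambda_{\varphi_n,w}$ contains nonzero functions (for instance $\chi_{[0,t]}$ for small $t>0$); if properness of $Y$ in $(\Lambda_{\varphi,w})_a$ is also desired, it follows from the nonemptiness of the complement guaranteed by Theorem~\ref{th:OLspaceable1}.

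The substantive point — the step I expect to be the main obstacle — is the containment $Y\subseteq(\Lambda_{\varphi,w})_a$, since the stated hypotheses tie the $\varphi_n$ to one another but not directly to $\varphi$; here I would invoke the relation between $\varphi$ and the $\varphi_n$ coming from the ambient setting (in Theorem~\ref{th:OLspaceable1} each $\varphi_n$ satisfies the $\Delta_\varphi(\infty)$-condition). Fix $f\in\Lambda_{\varphi_n,w}$, so $\rho_{\varphi_n,w}(k'f)<\infty$ for some $k'>0$, and fix an arbitrary $k>0$; I must show $\rho_{\varphi,w}(kf)<\infty$. Applying the $\Delta_\varphi(\infty)$-condition with $b=k'/k$ gives $u_0$ with $\varphi(u)\le\varphi_n((k'/k)u)$ for $u\ge u_0$, so on the region $\{kf^*\ge u_0\}$ one has $\varphi(kf^*)\le\varphi_n(k'f^*)$, and this part of $\int_I\varphi(kf^*)\,w$ is dominated by $\rho_{\varphi_n,w}(k'f)<\infty$. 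On the complementary region $\{kf^*<u_0\}$ the integrand is bounded by $\varphi(u_0)$, so for $\gamma<\infty$ its contribution is at most $\varphi(u_0)\,W(\gamma)<\infty$. The genuinely delicate case is $\gamma=\infty$, where $W(\infty)=\infty$ renders this crude bound useless on the tail $\{f^*<u_0/k\}$: there one must control $\int\varphi(kf^*)\,w$ near infinity using the decay $f^*(t)\to0$ (forced by $\int\varphi_n(k'f^*)\,w<\infty$) together with a domination of $\varphi$ by $\varphi_n$ that is valid for small arguments as well — which is exactly the reason the $\gamma=\infty$ case of the statement is paired with the \emph{global} order $\varphi_{n+1}\prec\varphi_n$ rather than merely its $\infty$-version. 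Once the estimate holds for every $k>0$ we obtain $f\in(\Lambda_{\varphi,w})_a$, and therefore $Y\subseteq(\Lambda_{\varphi,w})_a$, finishing the proof.
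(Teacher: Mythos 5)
Your core argument --- turning $\varphi_{n+1}\prec\varphi_n$ (resp.\ $\varphi_{n+1}\prec_{\infty}\varphi_n$) into the continuous inclusions $\Lambda_{\varphi_n,w}\subseteq\Lambda_{\varphi_{n+1},w}$ via Theorem~\ref{Thm Orl-Lor inclusion}, with the direction of the inclusion correctly identified, and then observing that an increasing union of subspaces is a subspace --- is exactly the paper's proof, which consists of nothing more than that.

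Where you diverge is in attempting to justify the containment $\bigcup_{n=1}^{\infty}\Lambda_{\varphi_n,w}\subseteq(\Lambda_{\varphi,w})_a$. You are right that this is the substantive point and that the stated hypotheses relate the $\varphi_n$ only to one another, not to $\varphi$; the paper's own proof silently omits this step, and it is only meaningful in the contexts where the lemma is invoked (e.g.\ Corollary~\ref{co:OLspaceable1}, where each $\varphi_n$ satisfies the $\Delta_\varphi(\infty)$-condition). Your treatment of the case $\gamma<\infty$ is fine. But your resolution of the case $\gamma=\infty$ does not close the gap: the ``global order'' $\varphi_{n+1}\prec\varphi_n$ compares \emph{consecutive members of the sequence to each other}, not $\varphi$ to $\varphi_n$, so it cannot supply the small-argument domination $\varphi(u)\le\varphi_n(bu)$ that you correctly identify as necessary on the infinite-measure tail $\{kf^*<u_0\}$, and the $\Delta_\varphi(\infty)$-condition you import controls only $u\ge u_0$. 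The obstruction is genuine: with $w\equiv 1$, $\varphi(u)=u$ and $\varphi_n(u)=u^2$ (which does satisfy the $\Delta_\varphi(\infty)$-condition), one has $\Lambda_{\varphi_n,w}=L_2[0,\infty)\not\subseteq L_1[0,\infty)=\Lambda_{\varphi,w}$. So for $\gamma=\infty$ your paragraph describes the difficulty rather than resolving it; a proof would need an additional hypothesis tying $\varphi$ to $\varphi_n$ for small arguments (such as the $\Delta_\varphi(0)$-condition appearing in Theorem~\ref{th:OLspaceable2}), which neither the lemma's statement nor your proposal assumes.
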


\begin{proof}
	 Take $f,g \in \bigcup_{n=1}^\infty \Lambda_{\varphi_n,w}$ and $\alpha,\beta \in \R$.
	 There exist $r,s \in \mathbb N$ such that $f \in \Lambda_{\varphi_r,w}$ and $g \in \Lambda_{\varphi_s,w}$.
	Now take $k=\max\{ r,s \}$.
	Then, since $\varphi_{n+1} \prec \varphi_n$ {\rm (}resp.~$\varphi_{n+1} \prec_{\infty} \varphi_n${\rm )} for $\gamma = \infty$ {\rm (}resp.~$\gamma < \infty${\rm )}, by Theorem~\ref{Thm Orl-Lor inclusion}, we have that $f,g \in \Lambda_{\varphi_k,w}$.
	Hence, $\alpha f + \beta g \in \Lambda_{\varphi_k,w} \subset \bigcup_{n=1}^\infty \Lambda_{\varphi_n,w}$, as needed.
\end{proof}

\begin{corollary}\label{co:OLspaceable1}
	Let $\alpha \geq \aleph_0$ be a cardinal number, $I = [0, \gamma)$ with $\gamma \leq \infty$, and assume that \,$w$ \,is a weight function on $I$.
	Let $\varphi$, $\varphi_n$ $(n \geq 1)$ be Orlicz functions such that each \,$\varphi_n$ \,satisfies the $\Delta_\varphi(\infty)$-condition and \,$\varphi_{n+1} \prec \varphi_n$ {\rm (}resp.~$\varphi_{n+1} \prec_{\infty} \varphi_n${\rm )} \,for \,$\gamma = \infty$ \,{\rm (}resp.~$\gamma < \infty${\rm )} for every $n\in \mathbb N$.
		\begin{itemize}
			\item[\textup{(i)}] If $\aleph_0 \leq \alpha\leq \mathfrak c$ or $\alpha > \emph{card} ((\Lambda_{\varphi,w})_a)$, then $(\Lambda_{\varphi,w})_a \setminus \bigcup_{n=1}^\infty \Lambda_{\varphi_n,w}$ is not $(\alpha,\beta)$-spaceable in $(\Lambda_{\varphi,w})_a$, regardless of the cardinal number $\beta \geq \alpha$;
			\item[(ii)] If \,$\varphi$ satisfies the $\Delta_2$-condition when $\gamma = \infty$ {\rm (}resp. the $\Delta_2^\infty$-condition when $\gamma < \infty${\rm )}, then $\Lambda_{\varphi,w} \setminus \bigcup_{n=1}^\infty \Lambda_{\varphi_n,w}$ is not $(\alpha,\beta)$-spaceable in $\Lambda_{\varphi,w}$, regardless of the cardinal number $\beta \geq \alpha$.
		\end{itemize}
\end{corollary}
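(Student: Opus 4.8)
The plan is to establish non-$(\alpha,\beta)$-spaceability by exhibiting, for each admissible $\alpha$, a \emph{single} $\alpha$-dimensional subspace $M_\alpha \subset A\cup\{0\}$ that admits no closed extension inside $A\cup\{0\}$; since $(\alpha,\beta)$-spaceability demands such an extension for \emph{every} witnessing subspace, one bad witness already defeats it. Write $X:=(\Lambda_{\varphi,w})_a$ and $B:=\bigcup_{n=1}^\infty\Lambda_{\varphi_n,w}$, so that $A=X\setminus B$. By Lemma~\ref{lem:strongorder1} the order hypotheses make $B$ a vector subspace; it is proper because $A\neq\varnothing$ by Theorem~\ref{th:OLspaceable1}, and dense because it contains all simple functions, which are dense in the order-continuous space $X$. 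The governing principle (the mechanism behind the cited FPRR-type obstruction in the $L_p$ case) is that if $M_\alpha\subset M_\beta$ with $M_\beta$ closed, then $\overline{M_\alpha}\subseteq M_\beta$; hence if I can force $\overline{M_\alpha}$ to contain a nonzero element of $B$, then $M_\beta\cap B\neq\{0\}$, contradicting $M_\beta\subset A\cup\{0\}$. The case $\alpha>\operatorname{card}(X)$ is immediate, since then $\alpha>\dim X$ and so $A$ is not even $\alpha$-lineable, making the extendability clause fail vacuously.

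For $\aleph_0\le\alpha\le\mathfrak c$ I would build $M_\alpha$ from the disjointly supported basic sequence furnished by the spaceability argument. Using Theorem~\ref{th:pcs} (and the remark after it) as in the proof of Theorem~\ref{th:OLspaceable1}, fix pairwise disjoint sets $S_j$ $(j\ge1)$ of positive measure together with functions $F_j\in X\setminus B$ with $\supp F_j=S_j$ forming a basic sequence whose closed span $\overline{\operatorname{span}}\{F_j:j\ge1\}$ lies in $A\cup\{0\}$, and reserve one further set $S_0$ of positive finite measure disjoint from $\bigcup_j S_j$. Put $g:=\chi_{S_0}$; being bounded with finite-measure support it satisfies $g\in\Lambda_{\varphi_1,w}\subset B$ and $g\neq0$. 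Set $h_j:=F_j/\|F_j\|_{\varphi,w}$ and $N:=\operatorname{span}\{\,g+\tfrac1j h_j:j\ge1\,\}$. Since $\tfrac1j h_j\to0$ we get $g+\tfrac1j h_j\to g$, so $g\in\overline{N}$; this is the $B$-point that will block every closed extension.

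The technical heart is verifying $N\subset A\cup\{0\}$ and then enlarging $N$ to dimension $\alpha$ without losing this. For a nonzero $x=\sum_j c_j\bigl(g+\tfrac1j h_j\bigr)$ put $c:=\sum_j c_j$ and split $x=c\,g+u$ with $u:=\sum_j c_j\tfrac1j h_j$, the two summands having disjoint supports so that $|u|\le|x|$. If $c=0$ then $x=u\in\operatorname{span}\{h_j\}\subset A\cup\{0\}$; if $c\neq0$ then $u\neq0$ by linear independence of the $F_j$, and were $x$ to lie in some $\Lambda_{\varphi_m,w}$, solidity of that Banach function space applied to $|u|\le|x|$ would force $0\neq u\in\Lambda_{\varphi_m,w}\subset B$, contradicting $u\in A$. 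To reach dimension $\alpha$ I split $\mathbb N=\mathbb N_1\sqcup\mathbb N_2$ into two infinite pieces, build $N$ from $\{F_j:j\in\mathbb N_1\}$, and adjoin an $\alpha$-dimensional subspace $P\subset\overline{\operatorname{span}}\{F_j:j\in\mathbb N_2\}$, which exists because that closed span is infinite-dimensional and separable, hence of dimension $\mathfrak c\ge\alpha$. Running the same $c=0$ / $c\neq0$ dichotomy on $M_\alpha:=N+P$, now using that the closed spans over the disjoint index sets meet only in $0$ (the basic-sequence property) to guarantee $u\neq0$ when $c\neq0$, shows $M_\alpha\subset A\cup\{0\}$; and cardinal arithmetic gives $\dim M_\alpha=\aleph_0+\alpha=\alpha$. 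Since $g\in\overline{N}\subseteq\overline{M_\alpha}$, no closed $M_\beta$ with $M_\alpha\subset M_\beta\subset A\cup\{0\}$ exists, proving (i).

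Finally, part (ii) reduces to (i): when $\varphi$ satisfies the appropriate $\Delta_2$-condition one has $\Lambda_{\varphi,w}=(\Lambda_{\varphi,w})_a$, so the two difference sets coincide and $\operatorname{card}(\Lambda_{\varphi,w})=\mathfrak c$, whereupon the hypothesis ``$\aleph_0\le\alpha\le\mathfrak c$ or $\alpha>\operatorname{card}$'' is precisely the situation already handled. I expect the main obstacle to be the simultaneous bookkeeping in the enlargement step, namely keeping $M_\alpha$ of the prescribed dimension $\alpha$, entirely inside $A\cup\{0\}$, while retaining a nonzero $B$-point in its closure: these three demands pull against one another, and their reconciliation rests on carefully exploiting disjointness of supports together with the solidity of each $\Lambda_{\varphi_m,w}$.
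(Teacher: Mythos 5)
Your proof is correct, but it follows a genuinely different route from the paper's. The paper disposes of both items in a few lines by combining three ingredients: the $\alpha$-lineability coming from Theorem~\ref{th:OLspaceable1}, Lemma~\ref{lem:strongorder1} (which uses the hypothesis $\varphi_{n+1}\prec\varphi_n$, resp.\ $\prec_\infty$, to show that $B=\bigcup_{n}\Lambda_{\varphi_n,w}$ is a nontrivial vector subspace of $(\Lambda_{\varphi,w})_a$), and the general criterion of \cite[Corollary~2.5]{FPRR}, which states that the complement of a nontrivial subspace of a Banach space, once $\alpha$-lineable with $\alpha\geq\aleph_0$, is never $(\alpha,\beta)$-spaceable. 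What you do is essentially unpack that criterion and re-derive it concretely in the Orlicz--Lorentz setting: the perturbed system $g+\tfrac1j h_j$ with $g\in B\setminus\{0\}$ sitting in the closure is exactly the FPRR mechanism, and your verification that $M_\alpha=N+P\subset A\cup\{0\}$ is sound --- the $c=0$/$c\neq0$ dichotomy, the pairwise disjointness of $S_0$, $\bigcup_{j\in\mathbb N_1}S_j$ and $\bigcup_{j\in\mathbb N_2}S_j$, the solidity of each $\Lambda_{\varphi_m,w}$, and the dimension count $\aleph_0+\alpha=\alpha$ all check out, as do the boundary case $\alpha>\operatorname{card}((\Lambda_{\varphi,w})_a)$ and the reduction of (ii) to (i) via $\Lambda_{\varphi,w}=(\Lambda_{\varphi,w})_a$ under the $\Delta_2$-condition. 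One observation: your argument never actually uses that $B$ is a vector subspace, since the key step ``$x=cg+u\in\Lambda_{\varphi_m,w}\Rightarrow u\in\Lambda_{\varphi_m,w}$'' goes through solidity of the single space $\Lambda_{\varphi_m,w}$ rather than through $u=x-cg$ and closure of $B$ under subtraction; so your opening appeal to Lemma~\ref{lem:strongorder1} (and to the density of $B$) is decorative, and your version would survive even without the order hypotheses $\varphi_{n+1}\prec\varphi_n$, which the paper's route genuinely needs. What the paper's approach buys is brevity and reusability (the same two-line template is recycled for Corollaries~\ref{co:OLspaceable2} and~\ref{co:OLspaceable3}); what yours buys is self-containedness and a formally weaker hypothesis.
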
 

\begin{proof}
	Fix $\beta \geq \alpha$ a cardinal number.
	
	Let us prove first item (i).
	If $\alpha > \text{card} ((\Lambda_{\varphi,w})_a)$, then $(\Lambda_{\varphi,w})_a \setminus \bigcup_{n=1}^\infty \Lambda_{\varphi_n,w}$ is not $\alpha$-lineable.
	Assume now that $\alpha \leq \mathfrak c$.
	Since $(\Lambda_{\varphi,w})_a$ is a Banach space, $(\Lambda_{\varphi,w})_a \setminus \bigcup_{n=1}^\infty \Lambda_{\varphi_n,w}$ is $\alpha$-lineable by Theorem~\ref{th:OLspaceable1}, and \,$\bigcup_{n=1}^\infty \Lambda_{\varphi_n,w}$ is a nontrivial vector subspace of \,$(\Lambda_{\varphi,w})_a$ by Lemma~\ref{lem:strongorder1}. Hence, the result follows by \cite[Corollary~2.5]{FPRR}.
	
	Now let us prove item (ii).
	Assume that $\gamma = \infty$ and $\varphi$ satisfies the $\Delta_2$-condition (the case when $\gamma < \infty$ and $\varphi$ satisfies the $\Delta_2^\infty$-condition can be done analogously).
	If $\alpha > \mathfrak c$, then $(\Lambda_{\varphi,w})_a \setminus \bigcup_{n=1}^\infty \Lambda_{\varphi_n,w}$ is not $\alpha$-lineable since $\Lambda_{\varphi,w} = (\Lambda_{\varphi,w})_a$ has cardinality $\mathfrak c$ under the $\Delta_2$-condition.
	Assuming that $\alpha \leq \mathfrak c$ and applying the same arguments as in the proof of item (i), the latter proves item (ii).
\end{proof}

Assuming $I = [0, \infty)$, we also obtain the analogous statement with respect to those $\varphi_n$'s satisfying the $\Delta_{\varphi}(0)$-condition.

\begin{Theorem}\label{th:OLspaceable2}
Let \,$I = [0, \infty)$ \,and assume that \,$w$ \,is a weight function on \,$I$. Let \,$\varphi, \varphi_n$
$(n \geq 1)$ \,be Orlicz functions such that each \,$\varphi_n$ \,satisfies the $\Delta_{\varphi}(0)$-condition. Then the set \,$(\Lambda_{\varphi,w})_a \setminus \bigcup_{n=1}^\infty \Lambda_{\varphi_n,w}$ \,is $\mathfrak c$-spaceable in \,$(\Lambda_{\varphi,w})_a$.
Moreover, if $\varphi$ satisfies the $\Delta_2$-condition, then \,$\Lambda_{\varphi,w} \setminus \bigcup_{n=1}^\infty \Lambda_{\varphi_n,w}$ is maximal-spaceable in \,$\Lambda_{\varphi,w}$.
\end{Theorem}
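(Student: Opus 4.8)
The plan is to follow the template of Theorem~\ref{th:OLspaceable1} and invoke the PCS criterion (Theorem~\ref{th:pcs}) with $X := (\Lambda_{\varphi,w})_a$ and $B := \bigcup_{n=1}^\infty \Lambda_{\varphi_n,w}$. As recorded there, items (i)--(iv) are automatic: (i) and (ii) hold for every Banach function space, (iii) holds because $X$ is a cone, and (iv) holds because $X$ is a closed order-ideal. Thus everything reduces to producing, for item (v), a sequence of pairwise disjointly supported functions lying in $X \setminus B$. Since $I = [0,\infty)$ satisfies condition $(\beta)$, I may split $I$ into countably many pairwise disjoint sets of infinite measure, so it is enough to exhibit, on a prescribed set of infinite Lebesgue measure, a single function $f \in X \setminus B$; applying this on each piece yields the required sequence.

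The core is the one-function construction, which mirrors that of Theorem~\ref{th:OLspaceable1} but with the roles of ``large'' and ``small'' arguments interchanged. First I would extract the comparison inequality from the $\Delta_\varphi(0)$-condition: fixing $n$ and $k$, apply the condition with $b = 1/(2^k k^2)$ to get $\delta_{n,k} > 0$ with $\varphi(2^k k^2 v) \leq \varphi_n(v)$ for $0 \leq v \leq \delta_{n,k}$; combined with the convexity estimate $2^k \varphi(k^2 v) \leq \varphi(2^k k^2 v)$ this yields $\varphi_n(v) \geq 2^k \varphi(k^2 v)$ for $0 \leq v \leq \delta_{n,k}$. I then choose a sequence $(u_{n,k})_{k}\downarrow 0$ with $u_{n,k} \leq \delta_{n,k}$ and, crucially, with $(k u_{n,k})_k$ strictly decreasing (possible since the $u_{n,k}$ may be taken arbitrarily small), so that $\varphi_n(u_{n,k}) \geq 2^k \varphi(k^2 u_{n,k})$ for all $k$. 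Partitioning the ambient infinite-measure set into infinite-measure blocks $E_n$ and choosing inside $E_n$ measurable sets $E_{n,k}$ whose rearranged $w$-lengths satisfy $\int_{J_{n,k}} w = \tfrac{1}{2^k \varphi(k^2 u_{n,k})}$ (solvable because $W$ is continuous, increasing and $W(\infty)=\infty$), I set $f_n = \sum_k k u_{n,k}\chi_{E_{n,k}}$, so that $f_n^* = \sum_k k u_{n,k}\chi_{J_{n,k}}$, and finally $f := \sum_n f_n/(2^n\|f_n\|_{\varphi,w})$.

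The two verifications then run as in Theorem~\ref{th:OLspaceable1}. For membership $f \in X$ I would bound $\rho_{\varphi,w}(\lambda f) \leq \sum_n \rho_{\varphi,w}\big(\tfrac{\lambda f_n}{2^n\|f_n\|}\big)$ by orthogonal subadditivity, use convexity to control the tail indices $n$ (each such term $\leq \lambda/2^n$), and for the finitely many remaining $n$ observe that for $k$ beyond a fixed index the inequality $\varphi(\tfrac{\lambda k u_{n,k}}{\|f_n\|}) \leq \varphi(k^2 u_{n,k})$ holds (this uses only monotonicity of $\varphi$, not the direction of $(u_{n,k})_k$), giving a geometric tail; hence $\rho_{\varphi,w}(\lambda f) < \infty$ for every $\lambda$. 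For non-membership, fix $m$ and $\var > 0$, put $\delta = \var/(2^m\|f_m\|)$, and estimate, using monotonicity, $\rho_{\varphi_m,w}(\var f) \geq \rho_{\varphi_m,w}(\delta f_m) \geq \sum_{k\geq 1/\delta} \varphi_m(u_{m,k})\int_{J_{m,k}} w \geq \sum_{k \geq 1/\delta} 1 = \infty$, where we used $\varphi_m(\delta k u_{m,k}) \geq \varphi_m(u_{m,k}) \geq 2^k\varphi(k^2 u_{m,k})$ and $\int_{J_{m,k}} w = \tfrac{1}{2^k\varphi(k^2 u_{m,k})}$. Thus $f \in X \setminus B$, and Theorem~\ref{th:pcs} gives $\mathfrak c$-spaceability. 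The maximal-spaceability under $\Delta_2$ is then immediate: $\Lambda_{\varphi,w} = (\Lambda_{\varphi,w})_a$ is an infinite-dimensional separable Banach space, so $\dim(\Lambda_{\varphi,w}) = \mathfrak c$ and $\mathfrak c$-spaceable means maximal-spaceable.

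I expect the main obstacle to be the measure bookkeeping rather than the estimates themselves: unlike the $\Delta_\varphi(\infty)$ setting, here $u_{n,k}\to 0$ forces $\sum_k \int_{J_{n,k}} w = \sum_k \tfrac{1}{2^k\varphi(k^2 u_{n,k})}$ to diverge (indeed each term is $\geq 1/\varphi_n(u_{n,k}) \to \infty$), so every $f_n$ must be supported on a set of infinite measure. This is precisely why the statement is restricted to $I=[0,\infty)$ via condition $(\beta)$, and one must take care that the infinitely many blocks $E_n$ --- together with the pairwise disjoint supports required for item (v) --- can all be chosen of infinite measure inside $[0,\infty)$, while simultaneously arranging $(k u_{n,k})_k$ decreasing so that $f_n^*$ has the clean step form on which both estimates rely.
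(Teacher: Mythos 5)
Your proposal is correct and follows essentially the same route as the paper's proof: the same reduction via Theorem~\ref{th:pcs} to constructing one function in $X\setminus B$ supported on a prescribed set of infinite measure, the same modular estimates, and a construction that is the paper's up to the cosmetic reparametrization $v_{n,k}=k\,u_{n,k}$ (your inequality $\varphi_n(u_{n,k})\geq 2^k\varphi(k^2u_{n,k})$ is exactly the paper's $\varphi_n(v_{n,k}/k)\geq 2^k\varphi(k\,v_{n,k})$). The only minor divergence is that you fix the values first and then solve for the measures of the blocks via $W$, whereas the paper fixes the partition first and adjusts the $u_{n,k}$; both are equally valid.
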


\begin{proof}
Let \,$X := (\Lambda_{\varphi,w})_a$ \,and \,$B := \bigcup_{n=1}^\infty \Lambda_{\varphi_n,w}$. In view of Theorem \ref{th:pcs}, and similarly to the beginning of the proof of Theorem \ref{th:OLspaceable1}, it suffices to show that for \,$E \in \Sigma$ with $m(E) = \infty$, there exists a function in $X \setminus B$ whose support is $E$.

\vskip 3pt

Since $\varphi_n$ satisfies $\Delta_{\varphi}(0)$ for all $n \in \mathbb{N}$, there exists a decreasing sequence of real numbers $(u_{n,k})_{k=1}^{\infty} \downarrow 0$ such that
\begin{equation}\label{ineq:deltaphi0}
	\varphi_n\left(\frac{u_{n,k}}{k}\right) \geq 2^k\varphi(k u_{n,k}).
\end{equation}

Let $(E_n)_{n=1}^{\infty}$ be a partition of measurable subsets of $E$ such that \,$m(E_n) = \infty$. Furthermore, by the $\sigma$-finiteness of the Lebesgue measure, let \,$(E_{n,k})_{k=1}^{\infty}$\, be a partition of measurable subsets of $E_n$ such that \,$m(E_{n,k}) < \infty$\, for every \,$k \in \mathbb{N}$\, and \,$m(E_{n,k}) \uparrow \infty$\, as \,$k \rightarrow \infty$. Then, by passing to subsequences if necessary, we can choose $(u_{n,k})_{k=1}^{\infty}\downarrow 0$ such that
	
\begin{equation}\label{eq:wpieces2}
	\frac{1}{2^k \varphi(k u_{n,k})} = \int_{\sum_{i=1}^{k}m(E_{n,i})}^{\sum_{i=1}^{k+1}m(E_{n,i})}w
\end{equation}
for every $k \in \mathbb{N}$. We see that $\frac{1}{2^k \varphi(k u_{n,k})} \uparrow \infty$ as $k \rightarrow \infty$. For simplicity, we will denote $t_{n,k} = \sum_{i=1}^{k}m(E_{n,i})$ for the rest of the proof.

Now, define a function
\[	
f_n = \sum_{k=1}^{\infty} u_{n,k} \chi_{E_{n,k}}
\]
Notice that $f_n^* = \sum_{k=1}^{\infty} u_{n,k} \chi_{[t_{n,k}, t_{n,k+1})}$. Let
\[
f:= \sum_{n=1}^{\infty}\frac{f_n}{2^n\|f_n\|},
\]
whose support is the set $E$. For every $\lambda > 0$, by the orthogonal subadditivity and convexity of $\rho_{\varphi,w}$ and (\ref{eq:wpieces1}) there exists $k_n > \frac{\lambda}{\|f_n\|}$ such that
\begin{eqnarray*}
	\rho_{\varphi,w}(\lambda f) &\leq& \sum_{n=1}^{\infty}\frac{1}{2^n}\cdot \rho_{\varphi,w}\left(\frac{\lambda f_n}{\|f_n\|}\right) = \sum_{n=1}^{\infty}\frac{1}{2^n}\cdot\left(\sum_{k=1}^{\infty} \varphi\left(\frac{\lambda u_{n,k}}{\|f_n\|}\right)\int_{t_{n,k}}^{t_{n,k+1}}w\right)\\
	&=&\sum_{n=1}^{\infty}\frac{1}{2^n}\cdot\left(\sum_{k=1}^{k_n}\varphi\left(\frac{\lambda u_{n,k}}{\|f_n\|}\right) \int_{t_{n,k}}^{t_{n,k+1}}w + \sum_{k= k_n + 1}^{\infty}\varphi\left(\frac{\lambda u_{n,k}}{\|f_n\|}\right) \int_{t_{n,k}}^{t_{n,k+1}}w\right)\\
	&\leq& \sum_{n=1}^{\infty}\frac{1}{2^n}\cdot\left( \varphi(k_n u_{n,k_n}) \sum_{k=1}^{k_n}\int_{t_{n,k}}^{t_{n,k+1}}w +  \sum_{k= k_n + 1}^{\infty}\varphi(k u_{n,k}) \int_{t_{n,k}}^{t_{n,k+1}}w\right)\\
	&\leq&
	\sum_{n=1}^{\infty}\frac{1}{2^n}\cdot\left( \varphi(k_n u_{n,k_n}) \sum_{k=1}^{k_n}\int_{t_{n,k}}^{t_{n,k+1}}w +  \sum_{k= k_n + 1}^{\infty}\frac{1}{2^k}\right).
\end{eqnarray*}
Moreover, notice from (\ref{eq:wpieces1}) that $\int_{t_{n,k_n}}^{t_{n,k_n+1}}w \geq \int_{t_{n,k}}^{t_{n,k+1}}w$ for every $k < k_n$. Hence, we obtain
\begin{eqnarray*}
	\rho_{\varphi,w}(\lambda f) &\leq& \sum_{n=1}^{\infty}\frac{1}{2^n}\cdot\left( \varphi(k_n u_{n,k_n}) \sum_{k = 1}^{k_n}\int_{t_{n,k}}^{t_{n,k+1}}w +  \sum_{k= k_n + 1}^{\infty}\frac{1}{2^k}\right)\\
	&\leq&\sum_{n=1}^{\infty}\frac{1}{2^n}\cdot\left(\frac{k_n}{2^{k_n}} + \sum_{k= k_n + 1}^{\infty}\frac{1}{2^k}\right) \leq 1 < \infty.
\end{eqnarray*}
This shows that $f \in X = (\Lambda_{\varphi,w})_a$. On the other hand, for every $\varepsilon > 0$ and for each $n \in \mathbb{N}$, if $\frac{\varepsilon k_n}{2^n \|f_n\|} > 1$ for some $k_n \in \mathbb{N}$, we have by the monotonicity of $\rho_{\varphi_n,w}$ that
\begin{align*}
	\rho_{\varphi_n,w}(\varepsilon f)\geq \rho_{\varphi_n,w}\left(\frac{\varepsilon f_n}{2^n\|f_n\|}\right) & \geq \sum_{k = k_n + 1}^{\infty} \varphi_n\left( \frac{u_{n,k}}{k_n} \right) \int_{t_{n,k}}^{t_{n,k+1}}w \\
	& \geq \sum_{k= k_n + 1}^{\infty} \frac{2^k\varphi(k u_{n,k})}{2^k\varphi(k u_{n,k})} = \infty.
\end{align*}



This shows that $f \notin \Lambda_{\varphi_n,w}$ for all $n \in \mathbb{N}$. Hence, $f \in X \setminus B$ is a function whose support is $E$.

The maximal-spaceability can be obtained by using similar ideas used at the end of the proof of Theorem~\ref{th:OLspaceable1}.
We are done.
\end{proof}

Applying ideas analogous to those used in the proof of Corollary~\ref{co:OLspaceable1}, we have the following corollary.

\begin{corollary}\label{co:OLspaceable2}
	Let $\alpha \geq \aleph_0$ be a cardinal number, $I=[0,\infty)$ and assume that \,$w$ \,is a weight function on $I$.
	Let $\varphi$, $\varphi_n$ $(n \geq 1)$ be Orlicz functions such that each \,$\varphi_n$ \,satisfies the $\Delta_\varphi(0)$-condition and $\varphi_{n+1} \prec \varphi_n$ \,for every $n\in \mathbb N$.
		\begin{itemize}
			\item[\textup{(i)}] If $\alpha\leq \mathfrak c$ or $\alpha > \emph{card} ((\Lambda_{\varphi,w})_a)$, then $(\Lambda_{\varphi,w})_a \setminus \bigcup_{n=1}^\infty \Lambda_{\varphi_n,w}$ is not $(\alpha,\beta)$-spaceable in $(\Lambda_{\varphi,w})_a$, regardless of the cardinal number $\beta \geq \alpha$;
			\item[(ii)] If \,$\varphi$ satisfies the $\Delta_2$-condition, then $\Lambda_{\varphi,w} \setminus \bigcup_{n=1}^\infty \Lambda_{\varphi_n,w}$ is not $(\alpha,\beta)$-spaceable in $\Lambda_{\varphi,w}$, regardless of the cardinal number $\beta \geq \alpha$.
		\end{itemize}
\end{corollary}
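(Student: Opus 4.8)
The plan is to reproduce, almost verbatim, the argument of Corollary~\ref{co:OLspaceable1}, replacing the role of Theorem~\ref{th:OLspaceable1} by Theorem~\ref{th:OLspaceable2} and observing that here we work only in the branch $\gamma=\infty$ with the order $\varphi_{n+1}\prec\varphi_n$ (so no ``resp.'' cases appear). Two structural facts drive the whole argument. First, whenever $\aleph_0\le\alpha\le\mathfrak c$, the set $(\Lambda_{\varphi,w})_a \setminus \bigcup_{n=1}^\infty \Lambda_{\varphi_n,w}$ is $\alpha$-lineable, because it is $\mathfrak c$-spaceable by the first assertion of Theorem~\ref{th:OLspaceable2} (which needs only $\Delta_\varphi(0)$ on each $\varphi_n$). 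Second, the removed set $\bigcup_{n=1}^\infty \Lambda_{\varphi_n,w}$ is a nontrivial vector subspace of $(\Lambda_{\varphi,w})_a$: this is exactly Lemma~\ref{lem:strongorder1} in its $\gamma=\infty$ branch, whose hypothesis $\varphi_{n+1}\prec\varphi_n$ is precisely what is assumed here, so no new lemma is required. With these two facts the obstruction to $(\alpha,\beta)$-spaceability will be supplied by \cite[Corollary~2.5]{FPRR}, exactly as in Corollary~\ref{co:OLspaceable1}.

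For item~(i) I would fix a cardinal $\beta\ge\alpha$ and split into the two ranges of $\alpha$. If $\alpha>\operatorname{card}((\Lambda_{\varphi,w})_a)$, then no subspace of dimension $\alpha$ can be contained in the set, since a vector space of dimension $\alpha$ has cardinality at least $\alpha$; hence the set is not even $\alpha$-lineable and, a fortiori, not $(\alpha,\beta)$-spaceable. In the complementary range $\aleph_0\le\alpha\le\mathfrak c$, Theorem~\ref{th:OLspaceable2} gives the required $\alpha$-lineability, while Lemma~\ref{lem:strongorder1} provides the nontrivial subspace $\bigcup_{n=1}^\infty \Lambda_{\varphi_n,w}\subseteq(\Lambda_{\varphi,w})_a$ whose nonzero vectors all lie outside the set in question. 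Feeding these ingredients into \cite[Corollary~2.5]{FPRR} then yields an $\alpha$-dimensional subspace sitting inside the set that admits no extension to a $\beta$-dimensional subspace remaining inside it, which is exactly the failure of $(\alpha,\beta)$-spaceability.

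For item~(ii) I would first use the $\Delta_2$-condition to identify $\Lambda_{\varphi,w}=(\Lambda_{\varphi,w})_a$; being an infinite-dimensional separable Banach space, it has cardinality $\mathfrak c$. Consequently, if $\alpha>\mathfrak c$ the set fails to be $\alpha$-lineable for the same cardinality reason as above, whereas for $\aleph_0\le\alpha\le\mathfrak c$ the argument of item~(i) applies verbatim with $(\Lambda_{\varphi,w})_a$ replaced throughout by $\Lambda_{\varphi,w}$. This settles the corollary.

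The proof is essentially bookkeeping, and the one point deserving a moment's care — the sole genuine verification I anticipate — is confirming that Lemma~\ref{lem:strongorder1} applies as stated: one must check that the hypotheses of Corollary~\ref{co:OLspaceable2} (namely $\gamma=\infty$ together with $\varphi_{n+1}\prec\varphi_n$) match exactly the $\gamma=\infty$ branch of that lemma, so that $\bigcup_{n=1}^\infty\Lambda_{\varphi_n,w}$ is indeed a nontrivial subspace of $(\Lambda_{\varphi,w})_a$ furnishing the removed subspace required by \cite[Corollary~2.5]{FPRR}. Once this identification is in place, no new estimate is needed beyond those already carried out for Theorem~\ref{th:OLspaceable2} and Corollary~\ref{co:OLspaceable1}.
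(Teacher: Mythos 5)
Your proposal is correct and follows exactly the route the paper intends: the paper gives no separate proof of this corollary but states that it follows by the ideas of Corollary~\ref{co:OLspaceable1}, which is precisely what you carry out — $\alpha$-lineability from Theorem~\ref{th:OLspaceable2}, the nontrivial removed subspace from the $\gamma=\infty$ branch of Lemma~\ref{lem:strongorder1}, and the obstruction from \cite[Corollary~2.5]{FPRR}, with the cardinality argument handling the large-$\alpha$ cases.
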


We can apply the techniques in Theorems~\ref{th:OLspaceable1} and~\ref{th:OLspaceable2} to show the analogous statement when $(\varphi_n)_{n=1}^{\infty}$ is a mixture of Orlicz functions that satisfy the $\Delta_{\varphi}(0)$-condition or the $\Delta_{\varphi}(\infty)$-condition.

\begin{Theorem}\label{co:OLspaceablemix}
Assume that \,$I = [0,\infty )$ and \,$w$ is a weight function on \,$I$. Let \,$\varphi, \varphi_n$ $(n \geq 1)$ \,be Orlicz functions such that each \,$\varphi_n$ \,satisfies the $\Delta_{\varphi}(0)$-condition or the $\Delta_{\varphi}(\infty)$-condition. Then the set \,$(\Lambda_{\varphi,w})_a \setminus \bigcup_{n=1}^\infty \Lambda_{\varphi_n,w}$ \,is $\mathfrak c$-spaceable in \,$(\Lambda_{\varphi,w})_a$.
Moreover, if $\varphi$ satisfies the $\Delta_2$-condition, then \,$\Lambda_{\varphi,w} \setminus \bigcup_{n=1}^\infty \Lambda_{\varphi_n,w}$ \,is maximal-spaceable in \,$\Lambda_{\varphi,w}$.
\end{Theorem}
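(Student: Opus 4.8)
The plan is to invoke once more the PCS-criterion of Theorem~\ref{th:pcs} with $X := (\Lambda_{\varphi,w})_a$ and $B := \bigcup_{n=1}^{\infty}\Lambda_{\varphi_n,w}$. As in the proofs of Theorems~\ref{th:OLspaceable1} and~\ref{th:OLspaceable2}, conditions~(i) and~(ii) hold for every Banach function space, condition~(iii) holds because $B$ is a union of the linear (hence conical) spaces $\Lambda_{\varphi_n,w}$, and condition~(iv) follows from solidity: if $f+g\in\Lambda_{\varphi_m,w}$ with $f\wedge g=0$, then $|f|,|g|\le|f+g|$ forces $f,g\in\Lambda_{\varphi_m,w}\subseteq B$. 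Thus everything reduces to condition~(v), which, exactly as before, I will obtain by showing that every $E\in\Sigma$ with $m(E)=\infty$ carries a function $f\in X\setminus B$ with $\supp f=E$ and then splitting $[0,\infty)$ into countably many pairwise disjoint sets of infinite measure.

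To build such an $f$, I would split the index set as $\mathbb{N}=N_0\sqcup N_\infty$, putting $n\in N_0$ when $\varphi_n$ satisfies the $\Delta_\varphi(0)$-condition and $n\in N_\infty$ otherwise (assigning $n$ arbitrarily if both hold). Since $m(E)=\infty$, I can partition $E=\bigsqcup_{n=1}^{\infty}E_n$ with $m(E_n)<\infty$ for $n\in N_\infty$ and $m(E_n)=\infty$ for $n\in N_0$. On each $E_n$ with $n\in N_\infty$ I construct the block $f_n=\sum_{k}k\,u_{n,k}\chi_{E_{n,k}}$ verbatim as in Theorem~\ref{th:OLspaceable1} (with $u_{n,k}\uparrow\infty$ and $\varphi_n(u_{n,k})\ge 2^k\varphi(k^2u_{n,k})$), while on each $E_n$ with $n\in N_0$ I construct $f_n=\sum_{k}u_{n,k}\chi_{E_{n,k}}$ verbatim as in Theorem~\ref{th:OLspaceable2} (with $u_{n,k}\downarrow 0$, $\varphi_n(u_{n,k}/k)\ge 2^k\varphi(ku_{n,k})$, on a partition $(E_{n,k})_k$ of $E_n$ of measures increasing to infinity). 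I then set
\[
f:=\sum_{n=1}^{\infty}\frac{f_n}{2^n\|f_n\|},
\]
whose support is precisely $E$. The key point is that the two constructions do not interfere, because the blocks are disjointly supported: for $f\in X$ I use that, for every $\lambda>0$, orthogonal subadditivity and convexity of $\rho_{\varphi,w}$ give $\rho_{\varphi,w}(\lambda f)\le\sum_{n=1}^{\infty}2^{-n}\rho_{\varphi,w}(\lambda f_n/\|f_n\|)$, and the estimates of Theorems~\ref{th:OLspaceable1} and~\ref{th:OLspaceable2} bound each summand by $1$ irrespective of the class of $n$, whence $\rho_{\varphi,w}(\lambda f)\le 1<\infty$; for $f\notin B$ I fix $m$ and $\varepsilon>0$, note that the restriction of $f$ to $E_m$ dominates $\varepsilon f_m/(2^m\|f_m\|)$, and apply monotonicity of $\rho_{\varphi_m,w}$ together with the appropriate blow-up estimate (from Theorem~\ref{th:OLspaceable1} if $m\in N_\infty$, from Theorem~\ref{th:OLspaceable2} if $m\in N_0$) to get $\rho_{\varphi_m,w}(\varepsilon f)=\infty$, hence $f\notin\Lambda_{\varphi_m,w}$ for all $m$.

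The main obstacle, I expect, is not any single estimate but the bookkeeping of fusing the two regimes into one function: the $\Delta_\varphi(\infty)$-blocks need finite-measure carriers on which $\varphi$-values are large, whereas the $\Delta_\varphi(0)$-blocks require small values spread over sets of infinite measure, so I must first secure a partition of $E$ mixing finite and infinite pieces (which is where $I=[0,\infty)$ is essential) and then check that the uniform per-block bound $\rho_{\varphi,w}(\lambda f_n/\|f_n\|)\le 1$ genuinely survives for \emph{every} $\lambda$ across both constructions, so that order-continuity ($\rho_{\varphi,w}(\lambda f)<\infty$ for all $\lambda$) is preserved. Once condition~(v) is verified, Theorem~\ref{th:pcs} yields $\mathfrak c$-spaceability of $(\Lambda_{\varphi,w})_a\setminus\bigcup_{n=1}^{\infty}\Lambda_{\varphi_n,w}$, and the maximal-spaceability statement follows exactly as at the end of Theorem~\ref{th:OLspaceable1}: if $\varphi$ satisfies the $\Delta_2$-condition, then $\Lambda_{\varphi,w}=(\Lambda_{\varphi,w})_a$ is an infinite-dimensional separable Banach space, so $\dim(\Lambda_{\varphi,w})=\mathfrak c$ and $\mathfrak c$-spaceability is maximal-spaceability.
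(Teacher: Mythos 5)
Your proposal is correct and follows essentially the same route as the paper: the PCS-criterion of Theorem~\ref{th:pcs} reduced to condition~(v), a splitting of the indices into the $\Delta_\varphi(\infty)$- and $\Delta_\varphi(0)$-classes, the $\Delta_\varphi(\infty)$-blocks placed on finite-measure carriers and the $\Delta_\varphi(0)$-blocks on infinite-measure carriers inside $E$, the combined function $f=\sum_n f_n/(2^n\|f_n\|)$, and the same modular estimates for membership in $(\Lambda_{\varphi,w})_a$ and exclusion from each $\Lambda_{\varphi_n,w}$. The only cosmetic difference is that the paper first splits $E$ into one finite-measure piece $F_1$ and one infinite-measure piece $F_2$ before subdividing, whereas you partition $E$ directly into countably many pieces of the appropriate measures; the maximal-spaceability conclusion is argued identically.
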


\begin{proof}
In view of Theorem \ref{th:pcs}, for a Lebesgue measurable set \,$E \in \Sigma$ \,with \,$m(E) = \infty$, we construct a function $f \in (\Lambda_{\varphi,w})_a \setminus \bigcup_{n=1}^{\infty} \Lambda_{\varphi_n,w}$ whose support is $E$. First, let $F_1, F_2 \subset E$ such that $F_1 \cap F_2 = \varnothing$, $m (F_1) = t_0 < \infty$, and $F_1 \cup F_2 = E$. Also, consider
\begin{align*}
N_1 &:= \{n \in \mathbb{N}: \varphi_n \,\,\, \text{satisfies}\,\,\, \Delta_{\varphi}(\infty)\text{-condition}\}\\
N_2 &:= \{n \in \mathbb{N}: \varphi_n \,\,\, \text{satisfies}\,\,\, \Delta_{\varphi}(0)\text{-condition}\}
\end{align*}
Then for every $a \in N_1$ there exists an increasing sequence $(u_{a,k})_{k=1}^{\infty} \uparrow \infty$ of real numbers such that $\varphi_a(u_{a,k}) \geq 2^k \varphi(k^2 u_{a,k})$. On the other hand, for every $b \in N_2$, there exists a decreasing sequence $(v_{b,l})_{l=1}^{\infty} \downarrow 0$ of real numbers such that
$\varphi_b\left(\frac{v_{b,l}}{l}\right) \geq 2^l\varphi(l v_{b,l})$ \,for all \,$l \in \mathbb{N}$.

Then, passing to subsequences if necessary, using the same arguments as Theorem~\ref{co:OLspaceable1} on the subset $F_1$ and Theorem~\ref{co:OLspaceable2} on the subset $F_2$, we can choose sequences $(u_{a,k})_{k=1}^{\infty}\uparrow \infty$, $(t_{a,k})_{k=1}^{\infty}\downarrow 0$, $(v_{b,l})_{l=1}^{\infty}\downarrow 0$, and $(t_{b,l})_{l=1}^{\infty}\uparrow \infty$ such that
\[
\frac{1}{2^k \varphi(k^2 u_{a,k})} = \int_{t_{a,k+1}}^{t_{a,k}} w \,\,\,\, \text{and} \,\,\,\, \frac{1}{2^l \varphi(l v_{b,l})} = \int_{t_{b,l}}^{t_{b,l+1}} w.
\]
Moreover, we can find the partitions $(F_{a,k})_{k=1}^{\infty}$ of $F_1$ and $(F_{b,l})_{l=1}^{\infty}$ of $F_2$ such that $m(F_{a,k}) = t_{a,k} - t_{a,k+1}$ and
$m(F_{b,l}) = t_{b,l+1} - t_{b,l}$.

Now, define
\[
f_{a} := \sum_{k=1}^{\infty} ku_{a,k}\chi_{E_{a,k}}\,\,\, \text{and}\,\,\, f_{b} := \sum_{l = 1}^{\infty} u_{b,l} \chi_{E_{b,l}}.
\]
We see that $f_a^* = \sum_{k=1}^{\infty}ku_{a,k} \chi_{[t_{a,k+1}, t_{a,k})}$ and $f_b^* = \sum_{l=1}^{\infty} u_{b,l} \chi_{[t_{b,l}, t_{b,l+1})}$. Define
\[
f := \sum_{a\in N_1}\frac{f_a}{2^a\|f_a\|} + \sum_{b\in N_2}\frac{f_b}{2^b\|f_b\|}.
\]
Using the orthogonal subadditivity and convexity of $\rho_{\varphi,w}$, for every $\lambda > 0$, we can find natural numbers \,$k_a > \frac{\lambda}{\|f_a\|}$\, and \,$l_b > \frac{\lambda}{\|f_b\|}$\, such that
\begin{align*}
	\rho_{\varphi,w}(\lambda f) & \leq \sum_{n=1}^{\infty} \frac{1}{2^n}\rho_{\varphi,w} \left( \frac{\lambda f_n}{\|f_n\|}\right) = \sum_{a \in N_1}\frac{1}{2^a}\rho_{\varphi,w} \left( \frac{\lambda f_a}{\|f_a\|} \right) + \sum_{b \in N_2}\frac{1}{2^b}\rho_{\varphi,w} \left( \frac{\lambda f_b}{\|f_b\|} \right)\\
	& = \sum_{a \in N_1}\frac{1}{2^a}\left(\sum_{k=1}^{k_a}\varphi \left( \frac{\lambda k u_{a,k}}{\|f_a\|} \right) \int_{t_{a,k+1}}^{t_{a,k}} w + \sum_{k= k_a + 1}^{\infty}\varphi \left( \frac{\lambda k u_{a,k}}{\|f_a\|} \right) \int_{t_{a,k+1}}^{t_{a,k}} w\right) \\
	&\quad + \sum_{b \in N_2}\frac{1}{2^b}\left(\sum_{l=1}^{l_b}\varphi \left( \frac{\lambda v_{b,l}}{\|f_b\|} \right) \int_{t_{b,l}}^{t_{b,l+1}} w + \sum_{l= l_b + 1}^{\infty}\varphi \left( \frac{\lambda v_{b,l}}{\|f_b\|} \right) \int_{t_{b,l}}^{t_{b,l+1}} w\right)\\
	&\leq \sum_{a\in N_1} \frac{1}{2^a}\cdot\left( \varphi(k_a^2 u_{a,k_a}) \sum_{k=1}^{k_a}\int_{t_{a,k+1}}^{t_{a,k}}w +  \sum_{k= k_a + 1}^{\infty}\varphi(k^2 u_{a,k}) \int_{t_{a,k+1}}^{t_{a,k}}w\right)\\
	& \quad +\sum_{b\in N_2} \frac{1}{2^b}\cdot\left( \varphi(l_b u_{b,l_b}) \sum_{l=1}^{l_b}\int_{t_{b,l}}^{t_{b,l+1}}w +  \sum_{l= l_b + 1}^{\infty}\varphi(l u_{b,l}) \int_{t_{b,l}}^{t_{b,l+1}}w\right)\\
	&\leq
	\sum_{a\in N_1} \frac{1}{2^a}\cdot\left( \varphi(k_a^2 u_{a,k_a}) \sum_{k=1}^{k_a}\int_{t_{a,k+1}}^{t_{a,k}}w +  \sum_{k= k_a + 1}^{\infty}\frac{1}{2^k}\right)\\
	&\quad + \sum_{b\in N_2} \frac{1}{2^b}\cdot\left( \varphi(l_b u_{b,l_b}) \sum_{l=1}^{l_b}\int_{t_{b,l}}^{t_{b,l+1}}w +  \sum_{l= l_b + 1}^{\infty}\frac{1}{2^l}\right).
\end{align*}
Notice that $\int_{t_{a,k_a+1}}^{t_{a,k_a}}w \geq \int_{t_{a,k+1}}^{t_{a,k}}w$ and $\int_{t_{b,l_b}}^{t_{b,l_b+1}}w \geq \int_{t_{b,l}}^{t_{b,l+1}}w$ for all $k < k_a$ and $l < l_b$. Hence we see that
\begin{align*}
\rho_{\varphi,w}(\lambda f) &\leq \sum_{a\in N_1} \frac{1}{2^a}\cdot\left(\frac{k_a}{2^{k_a}} + \sum_{k= k_a + 1}^{\infty}\frac{1}{2^k}\right) + \sum_{b\in N_2} \frac{1}{2^b}\cdot\left(\frac{l_b}{2^{l_b}} + \sum_{l= l_b + 1}^{\infty}\frac{1}{2^l}\right) \leq 2 < \infty
\end{align*}
This shows that $f \in X = (\Lambda_{\varphi,w})_a$.

On the other hand, for any \,$a\in N_1$,\, $\varepsilon > 0$,\, and \,$n \in \mathbb{N}$, if \,$\frac{\varepsilon k_a}{2^a \|f_a\|} > 1$ for some $k_a \in \mathbb{N}$, by the monotonicity of \,$\rho_{\varphi_a,w}$\, we have
\begin{eqnarray*}
\rho_{\varphi_a,w}(\var f) \geq \rho_{\varphi_a,w} \left( \frac{\var f_a}{2^a \|f_a\|} \right)
&\geq& \sum_{k = k_a + 1}^{\infty} \varphi_a(u_{a,k}) \int_{t_{a,k+1}}^{t_{a,k}} w\\
&\geq& \sum_{k= k_a + 1}^{\infty} \frac{2^k\varphi (k^2 u_{a,k})}{2^k\varphi (k^2 u_{a,k})} = \infty.
\end{eqnarray*}

Hence $f \notin \Lambda_{\varphi_a,w}$ for every $a \in N_1$.

On the other hand, for any \,$b\in N_2$,\, $\varepsilon > 0$,\, and \,$n \in \mathbb{N}$, if \,$\frac{\varepsilon l_b}{2^b \|f_b\|} > 1$ for some $l_b \in \mathbb{N}$, we also see by the monotonicity of $\rho_{\varphi_b,w}$ that
\begin{eqnarray*}
\rho_{\varphi_b,w}(\var f) \geq \rho_{\varphi_b,w} \left( \frac{\var f_b}{2^b \|f_b\|} \right)
&\geq& \sum_{l = l_b +1}^{\infty} \varphi_b\left(\frac{v_l}{l}\right) \int_{t_{b,l}}^{t_{b,l+1}}  w\\
&\geq& \sum_{l= l_b +1}^{\infty} \frac{2^l\varphi(l v_l)}{2^l\varphi(l v_l)} = \infty.
\end{eqnarray*}

This shows that $f \notin \Lambda_{\varphi_b, w}$ for every $b \in N_2$.
Consequently, we have $f \in X \setminus B = (\Lambda_{\varphi,w})_a \setminus \bigcup_{n=1}^{\infty}\Lambda_{\varphi_n,w}$ whose support is $E$.

The maximal-spaceability property of the second part can be obtained by using similar ideas used at the end of the proof of Theorem~\ref{th:OLspaceable1} (and~\ref{th:OLspaceable2}).
The proof is finished.
\end{proof}

Similarly to Corollaries~\ref{co:OLspaceable1} and~\ref{co:OLspaceable2}, we have the following result.

\begin{corollary}\label{co:OLspaceable3}
	Let $\alpha \geq \aleph_0$ be a cardinal number, $I = [0,\infty )$ and assume that \,$w$ is a weight function on \,$I$.
	Let \,$\varphi, \varphi_n$ $(n \geq 1)$ \,be Orlicz functions such that each \,$\varphi_n$ \,satisfies the $\Delta_{\varphi}(0)$-condition or the $\Delta_{\varphi}(\infty)$-condition, and \,$\varphi_{n+1} \prec \varphi_n$ \,for every $n\in \mathbb N$.
		\begin{itemize}
			\item[\textup{(i)}] If $\alpha\leq \mathfrak c$ or $\alpha > \emph{card} ((\Lambda_{\varphi,w})_a)$, then $(\Lambda_{\varphi,w})_a \setminus \bigcup_{n=1}^\infty \Lambda_{\varphi_n,w}$ is not $(\alpha,\beta)$-spaceable in $(\Lambda_{\varphi,w})_a$, regardless of the cardinal number $\beta \geq \alpha$;
			\item[(ii)] If \,$\varphi$ satisfies the $\Delta_2$-condition, then $\Lambda_{\varphi,w} \setminus \bigcup_{n=1}^\infty \Lambda_{\varphi_n,w}$ is not $(\alpha,\beta)$-spaceable in $\Lambda_{\varphi,w}$, regardless of the cardinal number $\beta \geq \alpha$.
		\end{itemize}
\end{corollary}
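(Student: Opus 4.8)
The plan is to mirror the arguments of Corollaries~\ref{co:OLspaceable1} and~\ref{co:OLspaceable2} almost word for word, since the only new feature here is that the family $(\varphi_n)_{n=1}^\infty$ mixes functions obeying the $\Delta_\varphi(0)$- and $\Delta_\varphi(\infty)$-conditions. The whole argument rests on feeding two structural inputs into \cite[Corollary~2.5]{FPRR}: (a) the $\alpha$-lineability of the difference set, and (b) the fact that $\bigcup_{n=1}^\infty \Lambda_{\varphi_n,w}$ is a \emph{nontrivial} vector subspace of $(\Lambda_{\varphi,w})_a$. First I would fix an arbitrary cardinal $\beta \geq \alpha$ once and for all.

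For input (a) I would cite Theorem~\ref{co:OLspaceablemix}, which under exactly the present mixed hypotheses already gives that $(\Lambda_{\varphi,w})_a \setminus \bigcup_{n=1}^\infty \Lambda_{\varphi_n,w}$ is $\mathfrak c$-spaceable in the Banach space $(\Lambda_{\varphi,w})_a$; whenever $\alpha \leq \mathfrak c$ one extracts an $\alpha$-dimensional subspace from the guaranteed $\mathfrak c$-dimensional closed one, so the set is $\alpha$-lineable. For input (b) I would apply Lemma~\ref{lem:strongorder1} with $\gamma = \infty$. The crucial observation is that this lemma consumes only the order relations $\varphi_{n+1} \prec \varphi_n$ (which are assumed) together with Theorem~\ref{Thm Orl-Lor inclusion}, and is completely indifferent to which $\Delta_\varphi$-condition each individual $\varphi_n$ satisfies; hence the $\Delta_\varphi(0)$/$\Delta_\varphi(\infty)$ mixture is irrelevant to it. This makes $\bigcup_{n=1}^\infty \Lambda_{\varphi_n,w}$ a vector subspace, which is nonzero because every $\Lambda_{\varphi_n,w} \neq \{0\}$ and proper because Theorem~\ref{co:OLspaceablemix} already guarantees the difference set is nonempty; thus it is nontrivial.

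With (a) and (b) secured, the cardinal bookkeeping is routine. For item (i): if $\alpha > \mathrm{card}((\Lambda_{\varphi,w})_a)$ then $(\Lambda_{\varphi,w})_a$ simply has no $\alpha$-dimensional subspace, so the difference set is not even $\alpha$-lineable and therefore not $(\alpha,\beta)$-spaceable; if instead $\aleph_0 \leq \alpha \leq \mathfrak c$, then (a) and (b) let me invoke \cite[Corollary~2.5]{FPRR} to conclude that the set is not $(\alpha,\beta)$-spaceable in $(\Lambda_{\varphi,w})_a$. For item (ii), the $\Delta_2$-condition collapses $\Lambda_{\varphi,w} = (\Lambda_{\varphi,w})_a$ to a separable infinite-dimensional Banach space, hence of cardinality $\mathfrak c$; so $\alpha > \mathfrak c$ again destroys $\alpha$-lineability, while the range $\aleph_0 \leq \alpha \leq \mathfrak c$ is treated verbatim as in item (i).

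I expect no genuine obstacle, because all the analytic effort was already spent in Theorem~\ref{co:OLspaceablemix}. The only point meriting a moment of care is confirming that Lemma~\ref{lem:strongorder1} is applicable under the mixed growth hypotheses and, in particular, that the union $\bigcup_{n=1}^\infty \Lambda_{\varphi_n,w}$ is \emph{proper} in $(\Lambda_{\varphi,w})_a$; once its nontriviality is in hand, the abstract dichotomy of \cite[Corollary~2.5]{FPRR} finishes the proof with no further estimates.
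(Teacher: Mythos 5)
Your proof is correct and takes essentially the same route the paper intends: the paper gives no explicit argument for this corollary, saying only that it follows ``similarly to Corollaries~\ref{co:OLspaceable1} and~\ref{co:OLspaceable2}'', and your reconstruction --- $\alpha$-lineability for $\alpha\leq\mathfrak c$ extracted from the $\mathfrak c$-spaceability in Theorem~\ref{co:OLspaceablemix}, nontriviality of $\bigcup_{n=1}^\infty\Lambda_{\varphi_n,w}$ as a vector subspace via Lemma~\ref{lem:strongorder1} (which indeed only uses $\varphi_{n+1}\prec\varphi_n$ and is insensitive to the $\Delta_\varphi(0)$/$\Delta_\varphi(\infty)$ mixture), and then \cite[Corollary~2.5]{FPRR} together with the cardinality bound for $\alpha>\mathrm{card}((\Lambda_{\varphi,w})_a)$ or $\alpha>\mathfrak c$ --- is exactly that argument.
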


As an application of Theorems~\ref{th:OLspaceable1} and~\ref{th:OLspaceable2}, we introduce the following subsets of the Lorentz space $\Lambda_{p,w} = \Lambda_{p,w}(I)$ that generalize the set of (left, right) strictly $p$-integrable functions on $I$:
\begin{align*}
	\Lambda_{p,w}^{\text{l-strict}} &= \Lambda_{p,w} \setminus \bigcup_{q \in [1, p)}\Lambda_{q,w} = \Lambda_{p,w} \setminus \bigcup_{n=k}^{\infty}\Lambda_{p-\frac{1}{n},w} \,\,\, \text{for some}\,\,\, k \in \mathbb{N}\\
	\Lambda_{p,w}^{\text{r-strict}} &= \Lambda_{p,w} \setminus \bigcup_{q \in [p, \infty)}\Lambda_{q,w} = \Lambda_{p,w} \setminus \bigcup_{n=1}^{\infty}\Lambda_{p+\frac{1}{n},w}\\
	\Lambda_{p,w}^{\text{strict}} &= \Lambda_{p,w}^{\text{l-strict}} \cap \Lambda_{p,w}^{\text{r-strict}}.
\end{align*}

When $\varphi(u) = u^p$, $1 \leq p < \infty$, the Orlicz-Lorentz space becomes the Lorentz space $\Lambda_{p,w}$. Since the Orlicz function \,$\varphi(u) = u^p$ \,satisfies the appropriate $\Delta_2$-condition, the Lorentz space $\Lambda_{p,w}$ is order-continuous \cite{K}, and so $(\Lambda_{\varphi,w})_a = (\Lambda_{p,w})_a = \Lambda_{p,w}$. It is well-known that $\psi(u) = u^q$ satisfies the $\Delta_{\varphi}(0)$- (resp. $\Delta_{\varphi}(\infty)$-) condition if, and only if, \,$q < p$\, (resp. \,$p < q$). Hence, we can see that $\Lambda_{p,w}^{\text{l-strict}} = (\Lambda_{\varphi,w})_a \setminus \bigcup_{n=k}^{\infty} \Lambda_{\varphi_n,w}$ for Orlicz functions $\varphi_n(u) =  u^{p - \frac{1}{n}}$ and a natural number $k > \frac{1}{p-1}$. Similarly, $\Lambda_{p,w}^{\text{r-strict}} = (\Lambda_{\varphi,w})_a \setminus \bigcup_{n=1}^{\infty} \Lambda_{\varphi_n,w}$ for Orlicz functions $\varphi_n(u) = u^{p + \frac{1}{n}}$. As a consequence, these subsets of $\Lambda_{p,w}$ are also maximal-spaceable (specifically $\mathfrak c$-spaceable) by Theorems~\ref{th:OLspaceable1} and~\ref{th:OLspaceable2}:

\begin{Corollary}\label{corollary Lambda p w strict}
	Let \,$w$ \,be a weight function on $I$. Then the following statements hold:
	\begin{enumerate}[\rm(i)]
		\item For $p > 1$, the set $\Lambda_{p,w}^{\text{l-strict}}$ \,is maximal-spaceable {\rm (}specifically $\mathfrak c$-spaceable{\rm )} in \,$\Lambda_{p,w}$.
		\item For $p < \infty$, the set $\Lambda_{p,w}^{\text{r-strict}}$ \,is maximal-spaceable {\rm (}specifically $\mathfrak c$-spaceable{\rm )} in \,$\Lambda_{p,w}$.
		\item For $p \in (1, \infty)$, the set $\Lambda_{p,w}^{\text{strict}}$ \,is maximal-spaceable {\rm (}specifically $\mathfrak c$-spaceable{\rm )} in \,$\Lambda_{p,w}$.
	\end{enumerate}
\end{Corollary}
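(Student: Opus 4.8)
The plan is to deduce all three assertions from the general spaceability theorems of this section by specializing to the power Orlicz function $\varphi(u)=u^{p}$. First I would record the two structural facts that make \emph{maximal-spaceable} and \emph{$\mathfrak c$-spaceable} coincide here: since $\varphi(u)=u^{p}$ satisfies the appropriate $\Delta_{2}$-condition, one has $(\Lambda_{\varphi,w})_{a}=\Lambda_{p,w}$, and $\Lambda_{p,w}$ is an infinite-dimensional separable Banach space, so $\dim(\Lambda_{p,w})=\mathfrak c$. Thus for each of the three sets it suffices to produce a closed infinite-dimensional (equivalently, $\mathfrak c$-dimensional) subspace inside it, and I would rewrite each set in the form $(\Lambda_{\varphi,w})_{a}\setminus\bigcup_{n}\Lambda_{\varphi_{n},w}$ using the identifications established just before the statement.

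For item (ii) I would take $\varphi_{n}(u)=u^{p+1/n}$. These are genuine Orlicz functions for every $p\in[1,\infty)$, and since $p<p+\tfrac1n$ each $\varphi_{n}$ satisfies the $\Delta_{\varphi}(\infty)$-condition. As $\Lambda_{p,w}^{\text{r-strict}}=(\Lambda_{\varphi,w})_{a}\setminus\bigcup_{n=1}^{\infty}\Lambda_{\varphi_{n},w}$, Theorem~\ref{th:OLspaceable1}---which is valid for both bounded and unbounded $I$---gives the $\mathfrak c$-spaceability of $\Lambda_{p,w}^{\text{r-strict}}$, and the $\Delta_{2}$-condition upgrades this to maximal-spaceability. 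For item (i) I would instead take $\varphi_{n}(u)=u^{p-1/n}$; here the convexity of $\varphi_{n}$ forces $p-\tfrac1n\ge1$, which is exactly why one restricts to $p>1$ and to indices $n\ge k$ with $k>\tfrac1{p-1}$. Since $p-\tfrac1n<p$, each such $\varphi_{n}$ satisfies the $\Delta_{\varphi}(0)$-condition, so Theorem~\ref{th:OLspaceable2} applies (on $I=[0,\infty)$) and yields the claimed spaceability of $\Lambda_{p,w}^{\text{l-strict}}$.

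The subtle case is item (iii), and I expect it to be the main obstacle. Here the naive temptation---to intersect the closed subspace witnessing (i) with the one witnessing (ii)---fails, because the intersection of two infinite-dimensional closed subspaces can be finite-dimensional, and there is no reason a subspace avoiding all smaller exponents should also avoid all larger ones. Instead I would observe that
\[
\Lambda_{p,w}^{\text{strict}}
=\Lambda_{p,w}\setminus\Big(\bigcup_{q\in[1,p)}\Lambda_{q,w}\ \cup\ \bigcup_{q\in(p,\infty)}\Lambda_{q,w}\Big)
=(\Lambda_{\varphi,w})_{a}\setminus\bigcup_{n=1}^{\infty}\Lambda_{\varphi_{n},w},
\]
where $(\varphi_{n})_{n=1}^{\infty}$ is a single sequence interleaving the functions $u^{p-1/n}$ (which satisfy $\Delta_{\varphi}(0)$) with the functions $u^{p+1/n}$ (which satisfy $\Delta_{\varphi}(\infty)$). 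This is precisely the mixed hypothesis of Theorem~\ref{co:OLspaceablemix}, whose proof builds a single function---and hence, via the PCS machinery of Theorem~\ref{th:pcs}, a whole closed subspace---escaping every $\Lambda_{\varphi_{n},w}$ simultaneously. Applying that theorem (again on $I=[0,\infty)$, with $p\in(1,\infty)$ to keep all exponents admissible) delivers the $\mathfrak c$-spaceability of $\Lambda_{p,w}^{\text{strict}}$, and the $\Delta_{2}$-condition promotes it to maximal-spaceability. The difficulty is thus conceptual rather than computational: recognizing that (iii) must be handled by the simultaneous construction of Theorem~\ref{co:OLspaceablemix} rather than by combining the subspaces produced in (i) and (ii).
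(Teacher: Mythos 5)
Your proposal is correct and follows essentially the same route as the paper, which derives the corollary in the paragraph preceding its statement by identifying $\Lambda_{p,w}^{\text{l-strict}}$ and $\Lambda_{p,w}^{\text{r-strict}}$ with sets of the form $(\Lambda_{\varphi,w})_a\setminus\bigcup_n\Lambda_{\varphi_n,w}$ for $\varphi_n(u)=u^{p\mp 1/n}$ and invoking Theorems~\ref{th:OLspaceable2} and~\ref{th:OLspaceable1}. Your explicit appeal to Theorem~\ref{co:OLspaceablemix} for item (iii) --- rather than trying to intersect the subspaces from (i) and (ii) --- is exactly the right mechanism and is in fact spelled out more carefully than in the paper, which only cites Theorems~\ref{th:OLspaceable1} and~\ref{th:OLspaceable2}.
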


Given $w$ and $(w_n)_{n=1}^\infty$ weight functions, we can derive the results analogous to those in Theorems~\ref{th:OLspaceable1}, \ref{th:OLspaceable2} and~\ref{co:OLspaceablemix} under appropriate conditions on the weight functions $w$ and $(w_n)_{n=1}^\infty$.
To show this, let us prove first the following lemma which is interesting on its own.
	
\begin{Lemma}\label{lem:OLspaceabilityspan}
	Let $I=[0,\gamma)$ with $\gamma \leq \infty$, $\varphi$ be an Orlicz function that satisfies the $\Delta_2$-condition for $\gamma = \infty$ ($\Delta_2^{\infty}$-condition for $\gamma < \infty$) and $w$ and $(w_n)_{n=1}^\infty$ weight functions on $I$. We denote \,$W(t) = \int_{0}^{t} w(s) \,ds$ and \,$W_n(t) = \int_{0}^{t} w_n(s) \,ds$ for each $n\geq 1$.
	If the following two statements hold:
		\begin{enumerate}[\rm(i)]
			\item there are positive constants \,$(K_n)_{n=1}^\infty$ \,such that \,$W(t) \leq K_n \cdot W_n(t)$ \,for all \,$t \in I$ and \,$n\in \mathbb N$, and
			\item $\Lambda_{\varphi,w_n} \subsetneq \Lambda_{\varphi,w_{n+1}}$ \,for every \,$n\in \mathbb N$,
		\end{enumerate}
	then the set \,$\bigcup_{n=1}^\infty \Lambda_{\varphi,w_n}$ is a non-closed dense vector subspace of \,$\Lambda_{\varphi,w}$.
\end{Lemma}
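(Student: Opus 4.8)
The plan is to establish the three properties in the conclusion—vector subspace, density, and non-closedness—separately, the first two being essentially routine and the last being the heart of the matter.

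First I would record the relevant inclusions. Applying Theorem~\ref{Thm Orl-Lor inclusion-2} with the roles $w_1:=w_n$ and $w_2:=w$, hypothesis (i) gives that each inclusion $\Lambda_{\varphi,w_n}\subset\Lambda_{\varphi,w}$ is continuous, so in particular $\bigcup_{n=1}^\infty\Lambda_{\varphi,w_n}\subset\Lambda_{\varphi,w}$. That $\bigcup_{n=1}^\infty\Lambda_{\varphi,w_n}$ is a vector subspace then follows verbatim from the argument in Lemma~\ref{lem:strongorder1}: given $f\in\Lambda_{\varphi,w_r}$ and $g\in\Lambda_{\varphi,w_s}$, the nesting in (ii) places both in $\Lambda_{\varphi,w_k}$ with $k=\max\{r,s\}$, whence $\alpha f+\beta g\in\Lambda_{\varphi,w_k}$ for all scalars $\alpha,\beta$.

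For density I would use the $\Delta_2$-hypothesis on $\varphi$, which forces $\Lambda_{\varphi,w}=(\Lambda_{\varphi,w})_a=(\Lambda_{\varphi,w})_b$, so that the simple functions with finite-measure support are dense in $\Lambda_{\varphi,w}$. Each such simple function $s$ already lies in $\Lambda_{\varphi,w_1}$, since $\rho_{\varphi,w_1}(ks)\le\varphi\bigl(k\|s\|_\infty\bigr)\,W_1\bigl(m(\supp s)\bigr)<\infty$ for every $k>0$ by local integrability of $w_1$; hence a dense subset of $\Lambda_{\varphi,w}$ is contained in $\Lambda_{\varphi,w_1}\subset\bigcup_{n=1}^\infty\Lambda_{\varphi,w_n}$, and density follows.

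The hard part is non-closedness, and here hypothesis (ii) enters decisively. Since a dense subspace is closed exactly when it is the whole space, it suffices to show $\bigcup_{n=1}^\infty\Lambda_{\varphi,w_n}\neq\Lambda_{\varphi,w}$. I would argue by contradiction: if equality held, then $\Lambda_{\varphi,w}$ would be a countable union of the subspaces $\Lambda_{\varphi,w_n}$, so by the Baire category theorem some $\Lambda_{\varphi,w_m}$ would be non-meager in $\Lambda_{\varphi,w}$. The inclusion $\iota_m:(\Lambda_{\varphi,w_m},\|\cdot\|_{\varphi,w_m})\to\Lambda_{\varphi,w}$ is a bounded operator between Banach spaces whose range $\Lambda_{\varphi,w_m}$ is non-meager; by the non-meager form of the open mapping theorem, a bounded operator with non-meager range is surjective, so $\Lambda_{\varphi,w_m}=\Lambda_{\varphi,w}$. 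This contradicts (ii), since then $\Lambda_{\varphi,w_{m+1}}\subset\Lambda_{\varphi,w}=\Lambda_{\varphi,w_m}\subsetneq\Lambda_{\varphi,w_{m+1}}$. Thus the union is a proper dense subspace and therefore not closed. The subtle point to get right is precisely this conversion of Baire non-meagerness into surjectivity: because the subspaces $\Lambda_{\varphi,w_n}$ are generally not closed in $\Lambda_{\varphi,w}$ (indeed each is itself dense), the naive assertion that a Banach space is not a countable increasing union of proper closed subspaces does not apply directly, and one must run the open mapping principle on the continuous inclusions, which is exactly where the completeness of each $(\Lambda_{\varphi,w_m},\|\cdot\|_{\varphi,w_m})$ and the continuity coming from (i) are used.
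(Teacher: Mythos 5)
Your proposal is correct, and its first two parts (vector subspace via Theorem~\ref{Thm Orl-Lor inclusion-2} and the nesting in (ii), density via the $\Delta_2$-condition and the containment of the finitely supported simple functions in $\Lambda_{\varphi,w_1}$) coincide with the paper's proof; your explicit modular estimate $\rho_{\varphi,w_1}(ks)\le\varphi(k\|s\|_\infty)W_1(m(\supp s))$ just makes precise a step the paper leaves implicit. Where you genuinely diverge is the non-closedness step. The paper assumes the union is closed and invokes Kitson--Timoney \cite{KT}*{Proposition 3.1} on the operator ranges $\mathrm{Id}_n(\Lambda_{\varphi,w_n})$ to collapse the closed union to a single $\Lambda_{\varphi,w_m}$, which contradicts (ii) directly; this does not use density at all. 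You instead first use density to reduce non-closedness to properness, then run Baire category plus the non-meager-range form of the open mapping theorem on the continuous inclusion $\iota_m$ to force $\Lambda_{\varphi,w_m}=\Lambda_{\varphi,w}$, again contradicting (ii). The two arguments rest on the same mechanism (the cited Kitson--Timoney result is itself a Baire/open-mapping argument), so your version buys self-containedness at the cost of making non-closedness logically dependent on the density part, whereas the paper's route would establish non-closedness even without density. Your remark that the naive ``no countable union of proper closed subspaces'' principle does not apply, because the $\Lambda_{\varphi,w_n}$ are not closed in $\Lambda_{\varphi,w}$, correctly identifies why the open mapping principle (and the completeness of each $(\Lambda_{\varphi,w_m},\|\cdot\|_{\varphi,w_m})$ together with the continuity from (i)) is indispensable here.
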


\begin{proof}
	Item (i) implies that the identity operator \,$\text{Id}_n : \Lambda_{\varphi,w_n} \to \Lambda_{\varphi,w}$ \,is continuous for every $n\in \mathbb N$\, by Theorem~\ref{Thm Orl-Lor inclusion-2}.
	Hence, \,$\text{Id}_n (\Lambda_{\varphi,w_n}) = \Lambda_{\varphi,w_n} \subset \Lambda_{\varphi,w}$\, for every $n\in \mathbb N$.
	Now it is easy to see that the latter combined with item (ii) yield that \,$\bigcup_{n=1}^\infty \Lambda_{\varphi,w_n}$ is a vector subspace of \,$\Lambda_{\varphi,w}$.
	
	Now let us show that \,$\bigcup_{n=1}^\infty \Lambda_{\varphi,w_n}$\, is not closed in \,$\Lambda_{\varphi,w}$.
	Assume to the contrary that $\bigcup_{n=1}^\infty \Lambda_{\varphi,w_n}$ is closed in \,$\Lambda_{\varphi,w}$.
	By \cite[Proposition~3.1]{KT} applied to \,$Y := \bigcup_{n=1}^\infty \text{Id}_n (\Lambda_{\varphi,w_n}) = \bigcup_{n=1}^\infty \Lambda_{\varphi,w_n}$ \,and item (ii), there exists \,$m \in \mathbb N$ such that
		$$
		Y = \bigcup_{n=1}^\infty \Lambda_{\varphi,w_n} = \bigcup_{n=1}^m \Lambda_{\varphi,w_n} = \Lambda_{\varphi,w_m}.
		$$
	Note that by item (ii), there is an \,$f \in \Lambda_{\varphi,w_{m+1}} \setminus \Lambda_{\varphi,w_m} = \Lambda_{\varphi,w_{m+1}} \setminus Y $.
	But by definition of \,$Y$ we have that \,$f\in Y$, which is a contradiction.
	
	Finally, it remains to prove that \,$\bigcup_{n=1}^\infty \Lambda_{\varphi,w_n}$ is a dense in \,$\Lambda_{\varphi,w}$.
	Since $\varphi$ satisfies the appropriate \,$\Delta_2$-condition, the set of simple functions (call it $S$) is dense in \,$\Lambda_{\varphi,w}$\, and \,$\Lambda_{\varphi,w_n}$\, for every \,$n\in \mathbb N$.
	Moreover, observe that
		$$
		S \subset \Lambda_{\varphi,w_1} \subset \bigcup_{n=1}^\infty \Lambda_{\varphi,w_n} \subset \Lambda_{\varphi,w}.
		$$
	which yields that \,$\bigcup_{n=1}^\infty \Lambda_{\varphi,w_n}$ is dense in \,$\Lambda_{\varphi,w}$, as needed.
\end{proof}

Now we are ready to prove the main result.

\begin{Theorem}\label{thm:OLspaceabilityspan1}
	Let $I=[0,\gamma)$ with $\gamma \leq \infty$, $\varphi$ be an Orlicz function that satisfies the $\Delta_2$-condition for $\gamma = \infty$ ($\Delta_2^{\infty}$-condition for $\gamma < \infty$) and $w$ and $(w_n)_{n=1}^\infty$ be weight functions on $I$. We denote \,$W(t) = \int_{0}^{t} w(s) \,ds$ and \,$W_n(t) = \int_{0}^{t} w_n(s) \,ds$ for each $n\geq 1$.
	If the following two statements hold:
		\begin{enumerate}[\rm(i)]
			\item there are positive constants \,$(K_n)_{n=1}^\infty$ \,such that \,$W(t) \leq K_n \cdot W_n(t)$ \,for all \,$t \in I$ and \,$n\in \mathbb N$, and
			\item $\Lambda_{\varphi,w_n} \subsetneq \Lambda_{\varphi,w_{n+1}}$ \,for every \,$n\in \mathbb N$,
		\end{enumerate}
	then \,$\Lambda_{\varphi,w} \setminus \bigcup_{n=1}^\infty \Lambda_{\varphi,w_n}$ is maximal-spaceable (specifically $\mathfrak c$-spaceable) in \,$\Lambda_{\varphi,w}$.
\end{Theorem}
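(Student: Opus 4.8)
The plan is to read the statement off the non-closedness already established in Lemma~\ref{lem:OLspaceabilityspan}, feed it into the separable $T$-subset criterion of Corollary~\ref{lem:inclusion}, and then upgrade the resulting spaceability to maximal (i.e.\ $\mathfrak c$-) spaceability by a dimension count. First I would set $X:=\Lambda_{\varphi,w}$ and check the hypotheses of Corollary~\ref{lem:inclusion}. The space $X$ is a rearrangement invariant Banach function space (Section~2), and since $\varphi$ satisfies the appropriate $\Delta_2$-condition it is separable by \cite{K}. Condition~(i) together with Theorem~\ref{Thm Orl-Lor inclusion-2} gives, for every $n$, a continuous inclusion $\Lambda_{\varphi,w_n}\subset X$, so $(\Lambda_{\varphi,w_n})_{n=1}^\infty$ is a countable family of rearrangement invariant Banach function spaces contained in the separable space $X$, exactly as required.

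Next I would invoke Lemma~\ref{lem:OLspaceabilityspan}: under (i) and (ii) the set $\bigcup_{n=1}^\infty\Lambda_{\varphi,w_n}$ is a dense, \emph{non-closed} vector subspace of $X$. Applying the equivalence (i)$\Leftrightarrow$(iii) of Corollary~\ref{lem:inclusion} to this non-closedness then yields that $\Lambda_{\varphi,w}\setminus\bigcup_{n=1}^\infty\Lambda_{\varphi,w_n}$ is spaceable; that is, there is an infinite-dimensional closed subspace $M$ with $M\subset(\Lambda_{\varphi,w}\setminus\bigcup_{n=1}^\infty\Lambda_{\varphi,w_n})\cup\{0\}$.

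To reach the claimed \emph{maximal}-spaceability I would then upgrade the dimension of $M$. Since $X$ is an infinite-dimensional separable Banach space, $\dim X=\mathfrak c$. The subspace $M$ is closed and infinite-dimensional, hence a Banach space in its own right, so by the standard Baire-category argument $\dim M\geq\mathfrak c$; as $M\subset X$ forces $\dim M\leq\dim X=\mathfrak c$, we obtain $\dim M=\mathfrak c$. Thus $\Lambda_{\varphi,w}\setminus\bigcup_{n=1}^\infty\Lambda_{\varphi,w_n}$ is $\mathfrak c$-spaceable, and because $\dim X=\mathfrak c$ this is precisely maximal-spaceability.

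The genuinely delicate work is not in this theorem but is already packaged in Lemma~\ref{lem:OLspaceabilityspan} (turning the strict inclusions (ii) and the comparison (i) into non-closedness) and in the separability input from the $\Delta_2$-condition; the only point to watch here is that Corollary~\ref{lem:inclusion} delivers mere spaceability, so the dimension count is needed for the maximal conclusion. An alternative route is to apply Theorem~\ref{th:pcs} directly to $X$ and $B:=\bigcup_{n}\Lambda_{\varphi,w_n}$, where conditions (i)--(iv) are automatic (with (iv) coming from solidity of each $\Lambda_{\varphi,w_n}$); I expect the main obstacle on that route to be hypothesis~(v), namely exhibiting a disjointly supported sequence lying outside every $\Lambda_{\varphi,w_n}$, which would require explicitly replicating, inside disjoint positive-measure pieces of $I$, a near-origin profile that escapes each $\Lambda_{\varphi,w_n}$ while remaining in $\Lambda_{\varphi,w}$. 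This makes the Corollary~\ref{lem:inclusion} approach the cleaner one.
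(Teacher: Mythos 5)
Your argument has the same skeleton as the paper's proof --- both start from Lemma~\ref{lem:OLspaceabilityspan} (non-closedness and density of $\bigcup_{n}\Lambda_{\varphi,w_n}$) and both finish with the identical dimension count ($\dim M\geq\mathfrak c$ for any infinite-dimensional closed subspace of a Banach space, $\dim\Lambda_{\varphi,w}=\mathfrak c$ by separability under the $\Delta_2$-condition) --- but you differ in the engine that converts non-closedness into spaceability. The paper invokes Kitson--Timoney \cite[Theorem~3.3]{KT}: since each $\Lambda_{\varphi,w_n}$ is the range of the continuous inclusion $\mathrm{Id}_n:\Lambda_{\varphi,w_n}\to\Lambda_{\varphi,w}$ (via hypothesis (i) and Theorem~\ref{Thm Orl-Lor inclusion-2}), the union is a countable, non-closed union of operator ranges, so its complement is spaceable; this works verbatim in any Banach space. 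You instead route through the Ruiz--S\'anchez $T$-subset criterion, Corollary~\ref{lem:inclusion}. That is a legitimate alternative, but it carries a restriction you should make explicit: the $T$-subset machinery (the operators $T_{a,b}$ and \cite[Theorem~3.1, Corollary~3.2]{RS}) is formulated for rearrangement invariant spaces on $[0,1]$, so your route covers $\gamma<\infty$ (after rescaling $[0,\gamma)$ to $[0,1]$) but does not directly apply when $I=[0,\infty)$, which the theorem also claims. To close that case you would either need to extend the $T$-subset argument to the half-line or fall back on the operator-range theorem, which is exactly why the paper chose the latter. Your closing assessment of the Theorem~\ref{th:pcs} route (the cost being an explicit construction for hypothesis~(v)) is accurate; the rest of your argument, including the identification of Lemma~\ref{lem:OLspaceabilityspan} as where the real work lives, is correct.
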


\begin{proof}
	By Lemma~\ref{lem:OLspaceabilityspan} and \cite[Theorem~3.3]{KT}, we have that \,$\Lambda_{\varphi,w} \setminus \bigcup_{n=1}^\infty \Lambda_{\varphi,w_n}$ is spaceable in \,$\Lambda_{\varphi,w}$.
	
	Now, since \,$\varphi$ \,satisfies the \,$\Delta_2$-condition for \,$\gamma = \infty$ ($\Delta_2^{\infty}$-condition for \,$\gamma < \infty$), the space \,$\Lambda_{\varphi,w} = (\Lambda_{\varphi,w})_a$ is an infinite-dimensional separable Banach space.
	Then, applying similar arguments used in the previous proofs, the maximal spaceability (and specifically the $\mathfrak c$-spaceability) property follows.
	The proof is done.
\end{proof}

From Theorem~\ref{thm:OLspaceabilityspan1} we can derive the following corollary which gives us more information on the algebraic structures inside \,$\Lambda_{\varphi,w} \setminus \bigcup_{n=1}^\infty \Lambda_{\varphi,w_n}$\, when compared to Corollaries~\ref{co:OLspaceable1}, \ref{co:OLspaceable2} and \ref{co:OLspaceable3}.

\begin{corollary}\label{cor:OLspaceabilityspan1}
	Let \,$I=[0,\gamma)$ with $\gamma \leq \infty$, $\varphi$ be an Orlicz function that satisfies the $\Delta_2$-condition for $\gamma = \infty$ ($\Delta_2^{\infty}$-condition for $\gamma < \infty$) and $w$ and $(w_n)_{n=1}^\infty$ be weight functions on $I$. 
	We denote \,$W(t) = \int_{0}^{t} w(s) \,ds$ and \,$W_n(t) = \int_{0}^{t} w_n(s) \,ds$ for each $n\geq 1$.
	Assume that the following two statements hold:
	\begin{enumerate}[\rm(i)]
			\item there are positive constants \,$(K_n)_{n=1}^\infty$ \,such that \,$W(t) \leq K_n \cdot W_n(t)$ \,for all \,$t \in I$ and \,$n\in \mathbb N$, and
			\item $\Lambda_{\varphi,w_n} \subsetneq \Lambda_{\varphi,w_{n+1}}$ \,for every \,$n\in \mathbb N$.
	\end{enumerate}
	Then the following assertions are satisfied.
	\begin{enumerate}[\rm(I)]
		\item $\Lambda_{\varphi,w} \setminus \bigcup_{n=1}^\infty \Lambda_{\varphi,w_n}$ is $(\alpha,\beta)$-dense-lineable in \,$\Lambda_{\varphi,w}$ for every cardinal number $\alpha$ and $\beta$ such that \,$\alpha < \mathfrak c$ and $\max\{ \aleph_0,\alpha \} \leq \beta \leq \mathfrak c$.
		\item $\Lambda_{\varphi,w} \setminus \bigcup_{n=1}^\infty \Lambda_{\varphi,w_n}$ is pointwise maximal dense-lineable {\rm (}specifically pointwise $\mathfrak c$-dense-lineable{\rm )} in \,$\Lambda_{\varphi,w}$.
		\item If $\alpha \geq \aleph_0$ is a cardinal number, then \,$\Lambda_{\varphi,w} \setminus \bigcup_{n=1}^\infty \Lambda_{\varphi,w_n}$ is not \,$(\alpha,\beta)$-spaceable in \,$\Lambda_{\varphi,w}$, regardless of the cadinal number $\beta \geq \alpha$.
		\item If $\alpha \leq \mathfrak c$ is a cardinal number, then \,$\Lambda_{\varphi,w} \setminus \bigcup_{n=1}^\infty \Lambda_{\varphi,w_n}$ is \,$(\alpha,\mathfrak c)$-lineable in \,$\Lambda_{\varphi,w}$.
	\end{enumerate}
\end{corollary}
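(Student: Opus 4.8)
The plan is to reduce all four items to the structural pair $(X,Y)$, where $X:=\Lambda_{\varphi,w}$, $Y:=\bigcup_{n=1}^{\infty}\Lambda_{\varphi,w_n}$ and $A:=X\setminus Y$. By Lemma~\ref{lem:OLspaceabilityspan}, $Y$ is a dense, non-closed, proper vector subspace of $X$; since $\varphi$ satisfies the appropriate $\Delta_2$-condition, $X=(\Lambda_{\varphi,w})_a$ is an infinite-dimensional separable Banach space, so $\dim X=\mathfrak c$; and by Theorem~\ref{thm:OLspaceabilityspan1}, $A$ is maximal-spaceable, hence maximal-lineable. Two elementary facts drive everything: $A$ is a cone (because $Y$ is a subspace), and $A+Y\subseteq A$ together with $A\cap Y=\varnothing$ --- indeed, if $a\in A$ and $y\in Y$ with $a+y\in Y$, then $a=(a+y)-y\in Y$, a contradiction. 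Moreover, since $A$ is maximal-lineable the quotient map $q\colon X\to X/Y$ satisfies $\dim(X/Y)=\mathfrak c$.

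For item (III), if $\alpha>\mathfrak c$ then $A$ is not $\alpha$-lineable (as $\dim X=\mathfrak c$) and hence not $(\alpha,\beta)$-spaceable. For $\aleph_0\le\alpha\le\mathfrak c$, the set $A$ is $\alpha$-lineable by Theorem~\ref{thm:OLspaceabilityspan1} and $Y$ is a nontrivial vector subspace with $A=X\setminus Y$, so \cite[Corollary~2.5]{FPRR} applies verbatim as in Corollaries~\ref{co:OLspaceable1}--\ref{co:OLspaceable3}, giving that $A$ is not $(\alpha,\beta)$-spaceable for any $\beta\ge\alpha$.

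The heart of the matter is item (I). Fix $\alpha<\mathfrak c$ and $\max\{\aleph_0,\alpha\}\le\beta\le\mathfrak c$, let $M_\alpha\subseteq A\cup\{0\}$ be $\alpha$-dimensional (so $M_\alpha\cap Y=\{0\}$, equivalently $q|_{M_\alpha}$ is injective), and let $N\subseteq A\cup\{0\}$ be a maximal-lineable witness. Using separability, fix a dense sequence $(d_k)_{k=1}^\infty$ in $X$; by density of $Y$ choose $y_k\in Y$ with $\|y_k-d_k\|<1/k$, and choose linearly independent $n_k\in N$ scaled so that $\|n_k\|<1/k$ and with $q(n_k)$ independent modulo $q(M_\alpha)$ and among themselves, which is possible because $\dim q(M_\alpha)=\alpha<\mathfrak c=\dim(X/Y)$. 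Put $z_k:=n_k+y_k\in A$, so that $(z_k)$ is dense in $X$ and $q(z_k)=q(n_k)$. Choosing a set $E\subseteq N$ of extra vectors with $q(E)$ independent modulo $q(M_\alpha)$ and the $q(n_k)$, and with $|E|$ adjusted so that $\dim\mathrm{span}(M_\alpha\cup\{z_k\}\cup E)=\beta$ (feasible since $\max\{\aleph_0,\alpha\}\le\beta\le\mathfrak c=\dim(X/Y)$), set $M_\beta:=\mathrm{span}(M_\alpha\cup\{z_k\}\cup E)$. By construction $q|_{M_\beta}$ is injective, hence $M_\beta\cap Y=\{0\}$, i.e.\ $M_\beta\subseteq A\cup\{0\}$; moreover $M_\beta\supseteq\{z_k\}$ is dense and $\dim M_\beta=\beta$. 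This is exactly $(\alpha,\beta)$-dense-lineability; it instantiates the abstract dense-extension criterion of \cite{FPT} (see also \cite{CGP}) for the pair $(A,Y)$, whose hypotheses $A+Y\subseteq A$, $A\cap Y=\varnothing$, density of $Y$ and maximal-lineability of $A$ were recorded above.

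Item (II) is the pointwise reading of (I) with $(\alpha,\beta)=(1,\mathfrak c)$: each $x\in A$ spans a one-dimensional subspace inside $A\cup\{0\}$ (as $A$ is a cone), and the construction above yields a dense $\mathfrak c$-dimensional subspace containing $x$ and meeting $Y$ only at $0$, which is pointwise maximal dense-lineability in the sense of \cite{PR}. Item (IV) follows from the same quotient--lifting step with the density requirement dropped: given $M_\alpha\subseteq A\cup\{0\}$ of dimension $\alpha\le\mathfrak c$, enlarge $q(M_\alpha)$ to a $\mathfrak c$-dimensional subspace of $X/Y$ and lift it through $q$ while leaving the vectors of $M_\alpha$ unchanged, producing $M_\beta$ with $q|_{M_\beta}$ injective, hence $M_\alpha\subseteq M_\beta\subseteq A\cup\{0\}$ and $\dim M_\beta=\mathfrak c$. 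The main obstacle throughout is the tension between density and disjointness from the dense subspace $Y$: one must build subspaces that are simultaneously dense and intersect $Y$ only at the origin, and the hypothesis $\alpha<\mathfrak c$ in (I) is precisely what secures enough room in $X/Y$ to perform the dense perturbation --- matching this cardinal bookkeeping to the exact hypotheses of the cited criteria is the delicate point.
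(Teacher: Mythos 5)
Your proposal is correct, and it rests on exactly the same structural skeleton as the paper's proof: density and non-closedness of $Y=\bigcup_{n=1}^\infty \Lambda_{\varphi,w_n}$ from Lemma~\ref{lem:OLspaceabilityspan}, maximal-spaceability of the complement from Theorem~\ref{thm:OLspaceabilityspan1}, separability of $\Lambda_{\varphi,w}$ (hence $\dim \Lambda_{\varphi,w}=\mathfrak c$ and density character $\aleph_0$), and $\operatorname{codim}(Y)=\mathfrak c$, which you correctly extract from maximal-lineability via injectivity of the quotient map on a witness subspace. The difference is in how the four conclusions are then obtained: the paper treats items (I), (II), (III) and (IV) as black-box applications of the general criteria in \cite{FPRR} (Corollaries~4.3 and~4.4), \cite{AB2} (Theorem~9) and \cite{AB} (Theorem~1), whereas you inline those criteria and prove them directly by a quotient--lifting and dense-perturbation argument (choosing $z_k=n_k+y_k$ with $n_k$ small in norm and independent modulo $q(M_\alpha)$, then padding with a set $E$ to reach dimension $\beta$). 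Your cardinal bookkeeping checks out: at each stage the subspace to be avoided in $X/Y$ has dimension at most $\alpha+\aleph_0<\mathfrak c=\dim(X/Y)$, so the required independent lifts exist, and $A+Y\subseteq A$ guarantees $z_k\in A$ and $M_\beta\cap Y=\{0\}$. For item (III) with $\aleph_0\le\alpha\le\mathfrak c$ you invoke \cite[Corollary~2.5]{FPRR} exactly as the paper does in Corollaries~\ref{co:OLspaceable1}--\ref{co:OLspaceable3}, while the paper here cites \cite[Theorem~9]{AB2} instead; both routes use the same verified hypotheses. Your self-contained version is longer but makes the mechanism transparent; the paper's version is shorter at the cost of relying on external statements. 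Two cosmetic remarks: $A=X\setminus Y$ is not literally a cone under the paper's definition (which requires $0\in A$), though $A\cup\{0\}$ is closed under scalar multiplication, which is all you actually use; and your attribution of the dense-extension criterion to \cite{FPT} and \cite{CGP} rather than \cite{FPRR} is immaterial since you prove it anyway.
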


\begin{proof}
	From the fact that \,$\Lambda_{\varphi,w}$ is separable, it has density character $\aleph_0$ and so  $\textup{codim} \left( \bigcup_{n=1}^\infty \Lambda_{\varphi,w_n} \right)=\mathfrak c$. Hence $\Lambda_{\varphi,w} \setminus \bigcup_{n=1}^\infty \Lambda_{\varphi,w_n}$ being $\aleph_0$-lineable by Theorem~\ref{thm:OLspaceabilityspan1} and \cite[Corollary~4.3]{FPRR}
	(The codimension is obtained by using the fact that \,$\Lambda_{\varphi,w}$ has dimension \,$\mathfrak c$, Theorem~\ref{thm:OLspaceabilityspan1} and proof of \cite[Corollary~2]{AB}). This shows (I).
	
	(II) is an immediate consequence of \cite[Corollary~4.4]{FPRR}.
	
	For (III), assume first that $\alpha \leq \mathfrak c$.
	Recall that on metric spaces the weight of a topological space coincides with its density character, so the weight of \,$\Lambda_{\varphi,w}$\, is \,$\aleph_0$.
	As \,$\bigcup_{n=1}^\infty \Lambda_{\varphi,w_n}$\, is a proper subspace of \,$\Lambda_{\varphi,w}$\, and $\textup{codim}\left(\bigcup_{n=1}^\infty \Lambda_{\varphi,w_n} \right) = \mathfrak c$, the result is derived from \cite[Theorem~9]{AB2}.
	The case when $\alpha > \mathfrak c$ follows from \,$\Lambda_{\varphi,w}$ having cardinality \,$\mathfrak c$.
	
	Finally, (IV) is obtained from the codimension of \,$\bigcup_{n=1}^\infty \Lambda_{\varphi,w_n}$ and \cite[Theorem~1]{AB}.
\end{proof}

Using arguments similar to those used in the proofs of Lemma~\ref{lem:OLspaceabilityspan}, Theorem~\ref{thm:OLspaceabilityspan1} and Corollary~\ref{cor:OLspaceabilityspan1} and in conjunction with Theorem~\ref{Thm Orl-Lor inclusion}, we have the following three results.
Theorem~\ref{thm:OLspaceabilityspan2} is another version of the second part of Theorems~\ref{th:OLspaceable1}, \ref{th:OLspaceable2} and~\ref{co:OLspaceablemix}, and Corollary~\ref{cor:OLspaceabilityspan2} improves (ii) of Corollaries~\ref{co:OLspaceable1}, \ref{co:OLspaceable2} and~\ref{co:OLspaceable3}, under appropriate conditions on the Orlicz functions \,$\varphi$ and $(\varphi_n)_{n=1}^\infty$.

\begin{Lemma}\label{lem:OLspaceabilityspan2}
	Let $I=[0,\gamma)$ with $\gamma \leq \infty$, $w$ be a weight function on \,$I$ and
	 \,$\varphi$ and $(\varphi_n)_{n=1}^\infty$ be Orlicz functions that satisfy the $\Delta_2$-condition for $\gamma = \infty$ {\rm (}$\Delta_2^{\infty}$-condition for $\gamma < \infty${\rm )}.
	If the following two statements hold:
	\begin{enumerate}[\rm(i)]
		\item $\varphi \prec \varphi_n$ \,for every $n\in \mathbb N$ {\rm (}resp.~$\varphi \prec_\infty \varphi_n$ \,for every $n\in \mathbb N${\rm )} \,for \,$\gamma = \infty$ \,{\rm (}resp.~$\gamma < \infty${\rm )}, and
		\item $\Lambda_{\varphi_n,w} \subsetneq \Lambda_{\varphi_{n+1},w}$ \,for every \,$n\in \mathbb N$,
	\end{enumerate}
	then the set \,$\bigcup_{n=1}^\infty \Lambda_{\varphi_n,w}$ is a non-closed dense vector subspace of \,$\Lambda_{\varphi,w}$.
\end{Lemma}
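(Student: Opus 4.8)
The plan is to mirror, essentially verbatim, the proof of Lemma~\ref{lem:OLspaceabilityspan}, replacing the weight comparison handled there by Theorem~\ref{Thm Orl-Lor inclusion-2} with the order comparison handled by Theorem~\ref{Thm Orl-Lor inclusion}. First I would exploit hypothesis (i): by Theorem~\ref{Thm Orl-Lor inclusion}, the order relation $\varphi \prec \varphi_n$ (resp.~$\varphi \prec_\infty \varphi_n$) is equivalent to the continuity of the inclusion $\text{Id}_n : \Lambda_{\varphi_n,w} \hookrightarrow \Lambda_{\varphi,w}$; in particular $\Lambda_{\varphi_n,w} \subset \Lambda_{\varphi,w}$ for every $n$. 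Combining this with the increasing chain (ii), namely $\Lambda_{\varphi_n,w} \subsetneq \Lambda_{\varphi_{n+1},w}$, I would argue as in Lemma~\ref{lem:strongorder1}: given $f \in \Lambda_{\varphi_r,w}$, $g \in \Lambda_{\varphi_s,w}$ and scalars $\alpha,\beta$, taking $k=\max\{r,s\}$ yields $f,g \in \Lambda_{\varphi_k,w}$ by (ii), hence $\alpha f + \beta g \in \Lambda_{\varphi_k,w} \subset \bigcup_{n=1}^\infty \Lambda_{\varphi_n,w}$. Thus $\bigcup_{n=1}^\infty \Lambda_{\varphi_n,w}$ is a vector subspace of $\Lambda_{\varphi,w}$.

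For non-closedness I would argue by contradiction exactly as before. Setting $Y := \bigcup_{n=1}^\infty \Lambda_{\varphi_n,w} = \bigcup_{n=1}^\infty \text{Id}_n(\Lambda_{\varphi_n,w})$ and assuming $Y$ closed in $\Lambda_{\varphi,w}$, I apply \cite[Proposition~3.1]{KT} to the sequence of continuous inclusions $\text{Id}_n$ together with the strictly increasing chain (ii); this produces an index $m$ with $Y = \Lambda_{\varphi_m,w}$. But (ii) furnishes some $f \in \Lambda_{\varphi_{m+1},w} \setminus \Lambda_{\varphi_m,w} = \Lambda_{\varphi_{m+1},w} \setminus Y$, while by definition of $Y$ we have $f \in Y$, a contradiction.

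For density I would invoke the $\Delta_2$-hypothesis. Since $\varphi$ satisfies the appropriate $\Delta_2$-condition, the set $S$ of simple functions (with finite-measure support) is dense in $\Lambda_{\varphi,w} = (\Lambda_{\varphi,w})_a$. Every such simple function, being bounded and of finite-measure support, lies in each $\Lambda_{\varphi_n,w}$, so that $S \subset \Lambda_{\varphi_1,w} \subset \bigcup_{n=1}^\infty \Lambda_{\varphi_n,w} \subset \Lambda_{\varphi,w}$, which forces $\bigcup_{n=1}^\infty \Lambda_{\varphi_n,w}$ to be dense in $\Lambda_{\varphi,w}$. (The hypothesis that each $\varphi_n$ also satisfies the $\Delta_2$-condition is what makes simple functions dense in each $\Lambda_{\varphi_n,w}$ as well, matching the remark in the preceding lemma, and is in any case available for the subsequent results.)

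I do not anticipate a genuine obstacle, since the argument is structurally identical to Lemma~\ref{lem:OLspaceabilityspan}; the three steps are short and rely only on already-proven facts. The single point requiring care is the \emph{direction} of the order relation: I must check that in Theorem~\ref{Thm Orl-Lor inclusion} the hypothesis $\varphi \prec \varphi_n$ corresponds to $\Lambda_{\varphi_n,w} \subset \Lambda_{\varphi,w}$ (so that the ambient, larger space is governed by the smaller function $\varphi$), rather than the reverse inclusion. Reversing this would invalidate both the subspace step (the union would no longer sit inside $\Lambda_{\varphi,w}$) and the density step, so it is the one place where I would be deliberate.
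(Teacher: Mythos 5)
Your proposal is correct and coincides with what the paper intends: the authors state this lemma without a written proof, indicating only that it follows by the arguments of Lemma~4.9 combined with Theorem~3.7, which is precisely the substitution you carry out, and you have the direction of Theorem~3.7 right ($\varphi \prec \varphi_n$ gives $\Lambda_{\varphi_n,w} \subset \Lambda_{\varphi,w}$ continuously). The only cosmetic remark is that the membership $S \subset \Lambda_{\varphi_1,w}$ needs no $\Delta_2$-condition on the $\varphi_n$'s (bounded functions of finite-measure support always have finite modular), so that hypothesis is not actually used in your density step.
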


\begin{Theorem}\label{thm:OLspaceabilityspan2}
	Let $I=[0,\gamma)$ with $\gamma \leq \infty$, $w$ be a weight function on \,$I$ and
	\,$\varphi$ and $(\varphi_n)_{n=1}^\infty$ be Orlicz functions that satisfy the $\Delta_2$-condition for $\gamma = \infty$ {\rm (}$\Delta_2^{\infty}$-condition for $\gamma < \infty${\rm )}.
	If the following two statements hold:
	\begin{enumerate}[\rm(i)]
		\item $\varphi \prec \varphi_n$ \,for every $n\in \mathbb N$ {\rm (}resp.~$\varphi \prec_\infty \varphi_n$ \,for every $n\in \mathbb N${\rm )} \,for \,$\gamma = \infty$ \,{\rm (}resp.~$\gamma < \infty${\rm )}, and
		\item $\Lambda_{\varphi_n,w} \subsetneq \Lambda_{\varphi_{n+1},w}$ \,for every \,$n\in \mathbb N$,
	\end{enumerate}
	then \,$\Lambda_{\varphi,w}\setminus \bigcup_{n=1}^\infty \Lambda_{\varphi_n,w}$ is maximal-spaceable {\rm (}specifically $\mathfrak c$-spaceable{\rm )} in \,$\Lambda_{\varphi,w}$.
\end{Theorem}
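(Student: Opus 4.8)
The plan is to mirror the proof of Theorem~\ref{thm:OLspaceabilityspan1}, interchanging the roles of the varying weights and the varying Orlicz functions and invoking Theorem~\ref{Thm Orl-Lor inclusion} in place of Theorem~\ref{Thm Orl-Lor inclusion-2}. First I would record the two structural facts that feed into the spaceability machinery. By hypothesis (i) together with Theorem~\ref{Thm Orl-Lor inclusion}, each inclusion $\Lambda_{\varphi_n,w} \hookrightarrow \Lambda_{\varphi,w}$ is well defined and continuous (indeed $\varphi \prec \varphi_n$ is equivalent to $\Lambda_{\varphi_n,w} \subset \Lambda_{\varphi,w}$ with continuous inclusion, and analogously with $\prec_\infty$ when $\gamma < \infty$). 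Thus $\bigcup_{n=1}^\infty \Lambda_{\varphi_n,w}$ is an increasing union of Banach spaces continuously embedded in $\Lambda_{\varphi,w}$, and by Lemma~\ref{lem:OLspaceabilityspan2} this union is a non-closed (in fact dense) vector subspace of $\Lambda_{\varphi,w}$.

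With these two facts in hand, the spaceability of $\Lambda_{\varphi,w}\setminus \bigcup_{n=1}^\infty \Lambda_{\varphi_n,w}$ follows immediately from the Kitson--Timoney-type criterion \cite[Theorem~3.3]{KT}: an increasing sequence of Banach spaces continuously included in a Banach space $X$ whose union fails to be closed in $X$ has spaceable complement. It then remains to upgrade spaceability to maximal-spaceability. Since $\varphi$ satisfies the appropriate $\Delta_2$-condition, we have $\Lambda_{\varphi,w} = (\Lambda_{\varphi,w})_a$, which is an infinite-dimensional separable Banach space, so $\dim(\Lambda_{\varphi,w}) = \mathfrak c$. The closed infinite-dimensional subspace $M \subset \Lambda_{\varphi,w}$ witnessing spaceability is itself an infinite-dimensional separable Banach space, hence has Hamel dimension $\mathfrak c$ by the Baire category theorem. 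Therefore $M$ is a $\mathfrak c$-dimensional closed subspace contained in $\bigl(\Lambda_{\varphi,w}\setminus \bigcup_{n=1}^\infty \Lambda_{\varphi_n,w}\bigr) \cup \{0\}$, which yields $\mathfrak c$-spaceability and, because $\dim(\Lambda_{\varphi,w}) = \mathfrak c$, maximal-spaceability.

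The genuine content is entirely packaged in Lemma~\ref{lem:OLspaceabilityspan2}, so the Theorem itself is a short deduction. Accordingly, the main obstacle I anticipate is not in this final assembly but in the Lemma on which it rests: one must show that hypotheses (i) and (ii) together force $\bigcup_{n=1}^\infty \Lambda_{\varphi_n,w}$ to be a \emph{proper, non-closed} subspace. The subspace property uses the order $\prec$ (resp.~$\prec_\infty$) and Theorem~\ref{Thm Orl-Lor inclusion} exactly as in Lemma~\ref{lem:OLspaceabilityspan}; the failure of closedness comes from feeding the strict inclusions (ii) into \cite[Proposition~3.1]{KT}, which rules out the union collapsing to a single $\Lambda_{\varphi_m,w}$; and density is obtained from the $\Delta_2$-density of simple functions in $\Lambda_{\varphi,w}$ and in each $\Lambda_{\varphi_n,w}$. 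Once Lemma~\ref{lem:OLspaceabilityspan2} is in place, the argument above closes the proof.
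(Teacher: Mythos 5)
Your proposal is correct and follows exactly the route the paper intends: Lemma~\ref{lem:OLspaceabilityspan2} (proved by transplanting the argument of Lemma~\ref{lem:OLspaceabilityspan}, with Theorem~\ref{Thm Orl-Lor inclusion} supplying the continuous inclusions $\Lambda_{\varphi_n,w}\hookrightarrow\Lambda_{\varphi,w}$ and \cite[Proposition~3.1]{KT} giving non-closedness) combined with \cite[Theorem~3.3]{KT} yields spaceability, and the $\Delta_2$-condition makes $\Lambda_{\varphi,w}=(\Lambda_{\varphi,w})_a$ separable of dimension $\mathfrak c$, which upgrades this to maximal ($\mathfrak c$-)spaceability. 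This matches the paper's (only sketched) proof, so nothing further is needed.
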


\begin{corollary}\label{cor:OLspaceabilityspan2}
	Let $I=[0,\gamma)$ with $\gamma \leq \infty$, $w$ be a weight function on \,$I$ and
	\,$\varphi$ and $(\varphi_n)_{n=1}^\infty$ be Orlicz functions that satisfy the $\Delta_2$-condition for $\gamma = \infty$ {\rm (}$\Delta_2^{\infty}$-condition for $\gamma < \infty${\rm )}.
	Assume that the following two statements hold:
	\begin{enumerate}[\rm(i)]
		\item $\varphi \prec \varphi_n$ \,for every $n\in \mathbb N$ {\rm (}resp.~$\varphi \prec_\infty \varphi_n$ \,for every $n\in \mathbb N${\rm )} \,for \,$\gamma = \infty$ \,{\rm (}resp.~$\gamma < \infty${\rm )}, and
		\item $\Lambda_{\varphi_n,w} \subsetneq \Lambda_{\varphi_{n+1},w}$ \,for every \,$n\in \mathbb N$.
	\end{enumerate}
	Then the following assertions are satisfied.
	\begin{enumerate}[\rm(I)]
		\item $\Lambda_{\varphi,w} \setminus \bigcup_{n=1}^\infty \Lambda_{\varphi_n,w}$ is $(\alpha,\beta)$-dense-lineable in \,$\Lambda_{\varphi,w}$ for every cardinal number $\alpha$ and $\beta$ such that \,$\alpha < \mathfrak c$ and $\max\{ \aleph_0,\alpha \} \leq \beta \leq \mathfrak c$.
		\item $\Lambda_{\varphi,w} \setminus \bigcup_{n=1}^\infty \Lambda_{\varphi_n,w}$ is pointwise maximal dense-lineable {\rm (}specifically pointwise $\mathfrak c$-dense-lineable{\rm )} in \,$\Lambda_{\varphi,w}$.
		\item If $\alpha \geq \aleph_0$ is a cardinal number, then $\Lambda_{\varphi,w} \setminus \bigcup_{n=1}^\infty \Lambda_{\varphi_n,w}$ is not \,$(\alpha,\beta)$-spaceable in \,$\Lambda_{\varphi,w}$, regardless of the cadinal number $\beta \geq \alpha$.
		\item If $\alpha \leq \mathfrak c$ is a cardinal number, then \,$\Lambda_{\varphi,w} \setminus \bigcup_{n=1}^\infty \Lambda_{\varphi_n,w}$ is \,$(\alpha,\mathfrak c)$-lineable in \,$\Lambda_{\varphi,w}$.
	\end{enumerate}
\end{corollary}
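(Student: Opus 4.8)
The plan is to replicate the proof of Corollary~\ref{cor:OLspaceabilityspan1} almost verbatim, replacing the weight-varying inputs by their Orlicz-function-varying counterparts. Concretely, I would first invoke Lemma~\ref{lem:OLspaceabilityspan2} to record that, under hypotheses (i) and (ii), the set $Y := \bigcup_{n=1}^\infty \Lambda_{\varphi_n,w}$ is a non-closed, dense, and (by (ii)) proper vector subspace of $\Lambda_{\varphi,w}$. Since $\varphi$ satisfies the appropriate $\Delta_2$-condition, $\Lambda_{\varphi,w} = (\Lambda_{\varphi,w})_a$ is an infinite-dimensional separable Banach space, so $\dim(\Lambda_{\varphi,w}) = \mathfrak{c}$ and its density character equals $\aleph_0$. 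Combining the separability with Theorem~\ref{thm:OLspaceabilityspan2} and the argument in the proof of \cite[Corollary~2]{AB}, I would then compute $\textup{codim}(Y) = \mathfrak{c}$, which is the single numerical fact feeding every item below.

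For assertion (I), Theorem~\ref{thm:OLspaceabilityspan2} shows that $\Lambda_{\varphi,w}\setminus Y$ is maximal-spaceable, hence in particular $\aleph_0$-lineable; together with $\textup{codim}(Y)=\mathfrak{c}$ and \cite[Corollary~4.3]{FPRR}, this yields $(\alpha,\beta)$-dense-lineability in the stated range $\alpha<\mathfrak{c}$, $\max\{\aleph_0,\alpha\}\le \beta\le \mathfrak{c}$. Assertion (II) is then immediate from \cite[Corollary~4.4]{FPRR}. For (III), since on metric spaces the topological weight coincides with the density character, the weight of $\Lambda_{\varphi,w}$ is $\aleph_0$; as $Y$ is a proper subspace with $\textup{codim}(Y)=\mathfrak{c}$, the non-$(\alpha,\beta)$-spaceability for $\aleph_0\le\alpha\le\mathfrak{c}$ follows from \cite[Theorem~9]{AB2}, while the case $\alpha>\mathfrak{c}$ is ruled out by $\mathrm{card}(\Lambda_{\varphi,w})=\mathfrak{c}$, which precludes $\alpha$-lineability altogether. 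Finally, (IV) follows from $\textup{codim}(Y)=\mathfrak{c}$ and \cite[Theorem~1]{AB}.

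The only genuine work is to certify that the hypotheses of the cited abstract lineability theorems are met, i.e.~that $Y$ really is a \emph{dense proper} subspace of \emph{codimension} $\mathfrak{c}$ inside a separable Banach space; this is exactly what Lemma~\ref{lem:OLspaceabilityspan2} (density and properness) and the separability of $\Lambda_{\varphi,w}$ (codimension computation) provide. I expect the main obstacle to lie not in the corollary itself, whose proof is purely formal once these inputs are in place, but one level up, in establishing Lemma~\ref{lem:OLspaceabilityspan2} and Theorem~\ref{thm:OLspaceabilityspan2}: there one must use the order condition $\varphi\prec\varphi_n$ (resp.~$\varphi\prec_\infty\varphi_n$) through Theorem~\ref{Thm Orl-Lor inclusion} to see that each $\Lambda_{\varphi_n,w}$ embeds continuously in $\Lambda_{\varphi,w}$ and that $Y$ is a subspace, verify via the strictness in (ii) together with \cite[Proposition~3.1]{KT} that $Y$ is not closed, and then obtain spaceability of the complement from \cite[Theorem~3.3]{KT}, upgrading it to maximal spaceability by separability exactly as in the earlier proofs.
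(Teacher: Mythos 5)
Your proposal is correct and follows essentially the same route as the paper: the authors explicitly derive Corollary~\ref{cor:OLspaceabilityspan2} by repeating the proof of Corollary~\ref{cor:OLspaceabilityspan1} with Lemma~\ref{lem:OLspaceabilityspan2} and Theorem~\ref{thm:OLspaceabilityspan2} supplying density, properness and maximal spaceability of the complement, Theorem~\ref{Thm Orl-Lor inclusion} replacing Theorem~\ref{Thm Orl-Lor inclusion-2}, and the same citations (\cite{FPRR}, \cite{AB}, \cite{AB2}, \cite{KT}) handling the codimension-$\mathfrak c$ arguments for items (I)--(IV). Your identification of where the real work lies (in the lemma and theorem one level up, not in the corollary itself) matches the paper's presentation.
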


For nonseparable Orlicz-Lorentz spaces, the complement of the order-continuous subspace is not only spaceable, but also contains an isometric copy of $\ell_{\infty}$.

\begin{Theorem} \label{Thm isomorphic copy}
	Let \,$I=[0,\gamma)$ with \,$\gamma \leq \infty$, $\varphi$ be an Orlicz function, and \,$w$ be a weight function. If \,$\varphi$ \,does not satisfy the $\Delta_2$-condition when $\gamma = \infty$
	{\rm (}resp.~$\Delta_2^{\infty}$-condition when $\gamma < \infty${\rm )}, then the set \,$\Lambda_{\varphi,w} \setminus (\Lambda_{\varphi,w})_a$ is $\mathfrak c$-spaceable in \,$\Lambda_{\varphi,w}$.
	Furthermore, the set \,$(\Lambda_{\varphi,w} \setminus (\Lambda_{\varphi,w})_a)\cup \{0\}$ \,contains an isometric copy of \,$\ell_{\infty}$.
\end{Theorem}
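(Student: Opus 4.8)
The plan is to exhibit a single linear isometry $T:\ell_\infty\to\Lambda_{\varphi,w}$ whose range is a closed subspace all of whose nonzero vectors lie in $\Lambda_{\varphi,w}\setminus(\Lambda_{\varphi,w})_a$; since $\ell_\infty$ has algebraic dimension $\mathfrak c$ and an isometry has complete (hence closed) range, this delivers both the $\mathfrak c$-spaceability and the isometric copy of $\ell_\infty$ simultaneously. First I would invoke \cite[Lemma 2.3]{K}: as $\varphi$ fails the appropriate $\Delta_2$-condition, there is a disjointly supported sequence $(f_n)_{n=1}^\infty$ with $\|f_n\|_{\varphi,w}=1$ and $\rho_{\varphi,w}(f_n)\le 2^{-n}$ for all $n$. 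The key observation is that the individual $f_n$ are typically order-continuous, so one cannot embed $\ell_\infty$ by $a\mapsto\sum_n a_n f_n$ directly (the image of a finitely supported sequence would then be order-continuous). To avoid this, fix a partition $\mathbb N=\bigsqcup_{j=1}^\infty N_j$ into infinitely many infinite sets and set $g_j:=\sum_{n\in N_j}f_n$, a pointwise sum that is well defined a.e.\ by disjointness. By countable orthogonal subadditivity $\rho_{\varphi,w}(g_j)\le\sum_{n\in N_j}2^{-n}\le 1$, so $\|g_j\|_{\varphi,w}\le 1$, while monotonicity gives $\|g_j\|_{\varphi,w}\ge\|f_n\|_{\varphi,w}=1$ for any $n\in N_j$; hence $\|g_j\|_{\varphi,w}=1$, and the $g_j$ remain pairwise disjointly supported.

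Next I would define $T(a):=\sum_{j=1}^\infty a_j g_j$ as a pointwise sum. Since at each point at most one summand is nonzero, $|T(a)|\le\|a\|_\infty\,\bigl|\sum_n f_n\bigr|$ pointwise and $\sum_n f_n\in\Lambda_{\varphi,w}$ (its modular is $\le 1$), whence $T(a)\in\Lambda_{\varphi,w}$ by solidity, and $T$ is plainly linear. For the isometry, the upper bound $\|T(a)\|_{\varphi,w}\le\|a\|_\infty$ follows from $|T(a)|\le\|a\|_\infty\bigl|\sum_j g_j\bigr|$ together with $\bigl\|\sum_j g_j\bigr\|_{\varphi,w}=\bigl\|\sum_n f_n\bigr\|_{\varphi,w}\le 1$; the lower bound comes from the disjointness estimate $|T(a)|\ge|a_j|\,|g_j|$, which by solidity gives $\|T(a)\|_{\varphi,w}\ge|a_j|\,\|g_j\|_{\varphi,w}=|a_j|$ for every $j$, so that taking the supremum yields $\|T(a)\|_{\varphi,w}\ge\|a\|_\infty$. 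Thus $\|T(a)\|_{\varphi,w}=\|a\|_\infty$.

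Finally I would show that every nonzero image is non-order-continuous, which is the heart of the argument. Given $a\neq 0$, choose $j_0$ with $a_{j_0}\neq 0$ and write $N_{j_0}=\{n_1<n_2<\cdots\}$. Put $h_N:=a_{j_0}\sum_{i\ge N}f_{n_i}$; by disjointness $|h_N|\le|a_{j_0}g_{j_0}|\le|T(a)|$ and $|h_N|\downarrow 0$ a.e.\ as $N\to\infty$. If $T(a)$ were order-continuous, the defining condition of $(\Lambda_{\varphi,w})_a$ would force $\|h_N\|_{\varphi,w}\downarrow 0$; but $|h_N|\ge|a_{j_0}|\,|f_{n_N}|$ gives $\|h_N\|_{\varphi,w}\ge|a_{j_0}|\,\|f_{n_N}\|_{\varphi,w}=|a_{j_0}|>0$, a contradiction, so $T(a)\notin(\Lambda_{\varphi,w})_a$. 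Consequently $M:=T(\ell_\infty)$ is an isometric copy of $\ell_\infty$ with $M\subset(\Lambda_{\varphi,w}\setminus(\Lambda_{\varphi,w})_a)\cup\{0\}$, and being the isometric image of a Banach space it is a closed subspace of dimension $\dim\ell_\infty=\mathfrak c$, yielding the $\mathfrak c$-spaceability. The main obstacle is exactly the point flagged above: a naive embedding using single $f_n$'s fails because those functions are order-continuous, so one must aggregate the $f_n$ into infinitely many infinite blocks and then extract the non-order-continuity of \emph{all} nonzero combinations from the tail characterization of $(\Lambda_{\varphi,w})_a$.
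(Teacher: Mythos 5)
Your proof is correct, but it takes a genuinely different route from the paper's. The paper first proves the $\mathfrak c$-spaceability through the PCS criterion (Theorem \ref{th:pcs}): using Lemma \ref{le:C} it extracts $(u_k)\uparrow\infty$ with $\varphi\bigl(\bigl(1+\tfrac1k\bigr)u_k\bigr)>2^k\varphi(u_k)$ and builds, inside any prescribed set of positive measure, an explicit step function $f=\sum_k u_k\chi_{[t_k,t_{k-1}]}$ satisfying $\rho_{\varphi,w}(f)=1$ but $\rho_{\varphi,w}(\varepsilon f)=\infty$ for every $\varepsilon>1$, so that $f\notin(\Lambda_{\varphi,w})_a$ by the modular characterization of the order-continuous subspace; the isometric copy of $\ell_\infty$ is then assembled afterwards from disjoint copies of these functions. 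You instead take as building blocks the disjointly supported norm-one sequence with $\rho_{\varphi,w}(f_n)\le 2^{-n}$ from \cite[Lemma 2.3]{K} (which the paper itself invokes in Section 3), group it into infinitely many infinite blocks $g_j$, and obtain both conclusions at once from a single isometric embedding of $\ell_\infty$. Two features of your argument deserve emphasis: the blocking is genuinely necessary, since the individual $f_n$ from \cite{K} may be order-continuous and the naive map $a\mapsto\sum_n a_nf_n$ would then leave the target set; and because your blocks need not have divergent modular at any dilation, you must (and correctly do) detect non-order-continuity of every nonzero image directly from the definition, via the tails $h_N$. The summability $\sum_n\rho_{\varphi,w}(f_n)\le 1$ also makes the upper estimate $\|Ta\|_{\varphi,w}\le\|a\|_\infty$ completely transparent, whereas the paper's blocks each have modular exactly $1$ and the corresponding estimate is only asserted there. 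In exchange, the paper's construction produces witnesses supported in an arbitrary prescribed measurable set, which is what the PCS machinery requires; your argument is shorter and self-contained once \cite[Lemma 2.3]{K} is granted.
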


\begin{proof}
	Once again, for a given Lebesgue measurable set \,$E \in \Sigma$ \,with \,$m(E) > 0$, we construct a function \,$f \in \Lambda_{\varphi,w} \setminus (\Lambda_{\varphi,w})_a$ \,whose support is contained in $E$. Since \,$\varphi$ \,does not satisfy the $\Delta_2$-condition, $\varphi$ does not satisfy either $\Delta_2^{\infty}$-condition or $\Delta_2^0$-condition. We only consider the case when \,$\varphi$ \,does not satisfy the $\Delta_2^{\infty}$-condition because the other one can be proved by the analogous argument. By Lemma \ref{le:C} there exists an increasing sequence $(u_k)_{k=1}^{\infty}\uparrow \infty$ of real numbers such that
	\begin{equation}
		\varphi\left(\left(1 + \frac{1}{k}\right) u_k \right) > 2^k \varphi(u_k).
	\end{equation}
	
	Let $E \in \Sigma$ and $t_0$ be a positive real number such that $\int_0^{t_0} w < \int_0^{m(E)} w \leq \gamma$. Passing to a subsequence if necessary, we can choose $(u_k)_{k=1}^{\infty}$ such that
	\begin{equation}
		\sum_{k=1}^{\infty} \frac{1}{2^k \varphi(u_k)} = \int_0^{t_0} w.
	\end{equation}
	
	Then there exists a decreasing sequence $(t_k)_{k=1}^{\infty} \downarrow 0$ such that
	\begin{equation}
		\frac{1}{2^k \varphi(u_k)} = \int_{t_{k}}^{t_{k-1}}w.
	\end{equation}	
	
	Now, define
	\[
	f = \sum_{k=1}^{\infty} u_k \chi_{[t_{k},t_{k-1}]}.
	\]
	Again, it is easy to see that $f = f^*$. Furthermore,
	\[
	\rho_{\varphi,w}(f) = \sum_{k=1}^{\infty} \varphi(u_k)\int_{t_{k}}^{t_{k-1}} w = \sum_{k = 1}^{\infty} \varphi(u_k) \cdot \frac{1}{2^k \varphi(u_k)} = 1 < \infty.
	\]
	Hence, $f \in \Lambda_{\varphi,w}$.
	
	On the other hand, for every $\varepsilon > 1$, there exists $l \in \mathbb{N}$ such that $1 + \frac{1}{k} < \varepsilon$ for all $k \geq l$. Then we have
	\begin{align*}
		\rho_{\varphi,w}(\varepsilon f) & \geq  \sum_{k = l + 1}^{\infty} \varphi(\var u_k) \int_{t_{k-1}}^{t_k}w \\
		&\geq  \sum_{k = l + 1}^{\infty} \varphi\left(\left(1 + \frac{1}{k}\right)u_k\right) \int_{t_{k-1}}^{t_k}w\\ & > \sum_{k = l + 1}^{\infty} \frac{2^k\varphi(u_k)}{2^k\varphi(u_k)} = \infty,
	\end{align*}
	and so $f \notin (\Lambda_{\varphi,w})_a$. Hence, $f \in \Lambda_{\varphi,w} \setminus (\Lambda_{\varphi,w})_a$ whose support is $(0, t_0) \subset (0, m(E))$. In view of \cite[Corollary 2.7.8]{BS}, we can find a Lebesgue measurable function $\widetilde{f}$ on the set $E_{t_0} \subset E$ with $m(E_{t_0}) = t_0$ such that $\left(\widetilde{f}\right)^* = f^*$. This shows the first claim by Theorem \ref{th:pcs}(v).
	
	Regarding the second part, define the operator \,$T: \ell_{\infty} \rightarrow (\Lambda_{\varphi,w}\setminus (\Lambda_{\varphi,w})_a)\cup \{0\}$ \,by \,$Tx = \sum_{n=1}^{\infty} x_n f_n$
	where $x = (x_n)_{n=1}^{\infty} \in \ell_{\infty}$ and $(f_n)_{n=1}^{\infty} \subset \Lambda_{\varphi,w} \setminus (\Lambda_{\varphi,w})_a$ constructed by the procedure in the first part on a family of pairwise disjoint measurable subsets $(E_n)_{n=1}^{\infty} \subset \Sigma$. As we saw in the first part, we have $\rho_{\varphi,w}(f_n) \leq 1$ for every $n \in \mathbb{N}$ and so $\|f_n\|_{\varphi,w} \leq 1$. Hence, it is easy to see that $\|Tx\|_{\varphi,w} \leq \|x\|_{\infty}$. Let $0 < \var < \|x\|_{\infty}$, where \,$x \ne 0$. Then there exists $n_0  \in \mathbb{N}$ such that $|x_{n_0}| > \|x\|_{\infty} - \frac{\var}{2}$.
	Thus, by the monotonicity of $\rho_{\varphi,w}$, we have
	\[
	\rho_{\varphi,w}\left(\frac{\sum_{n=1}^{\infty}|x_n f_n|}{\|x\|_{\infty} - \var/2}\right) \ge \rho_{\varphi,w}\left(\frac{|x_{n_0} f_{n_0}|}{\|x\|_{\infty} - \var/2}\right) = \infty.
	\]
	Since $\var > 0$ is arbitrary, this implies that $\|x\|_{\infty} \leq \|Tx\|_{\varphi,w}$. Therefore, the operator $T$ is an isometry, which in turn shows that  $(\Lambda_{\varphi,w}\setminus (\Lambda_{\varphi,w})_a)\cup \{0\}$ has an isometric copy of $\ell_{\infty}$.	
\end{proof}

To finish the paper, we turn to the context of topological genericity.
The arguments in Theorems~\ref{th:OLspaceable1} and~\ref{th:OLspaceable2} also show that the set $(\Lambda_{\varphi,w})_a \setminus \bigcup_{n=1}^{\infty} \Lambda_{\varphi_n,w}$ is either empty or residual under certain assumptions on the $\varphi_n$'s.
	
\begin{Theorem}\label{th:OLresidual}
Let $I=[0,\gamma)$ with $\gamma \leq \infty$ and assume that $w$ is a weight function on $I$.
Let \,$\varphi$, $\varphi_n$ {\rm (}$n \geq 1${\rm )} \,be Orlicz functions such that every \,$\varphi_n$ satisfies either the
$\Delta_{\varphi}(0)$-condition or the $\Delta_{\varphi}(\infty)$-condition.
Then \,$(\Lambda_{\varphi,w})_a \setminus \bigcup_{n=1}^{\infty}\Lambda_{\varphi_n,w}$ is either empty or residual in $(\Lambda_{\varphi,w})_a$.
	\end{Theorem}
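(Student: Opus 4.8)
The plan is to carry out the entire argument inside the Banach space $X := (\Lambda_{\varphi,w})_a$. Writing $B_n := X \cap \Lambda_{\varphi_n,w}$, the set under consideration is precisely the complement $X \setminus \bigcup_{n=1}^\infty B_n$, since $(\Lambda_{\varphi,w})_a \setminus \bigcup_n \Lambda_{\varphi_n,w} = X \setminus \bigcup_n(X \cap \Lambda_{\varphi_n,w})$. Hence everything reduces to the following dichotomy, to be proved for each fixed $n$: \emph{either} $B_n = X$ \emph{or} $B_n$ is meager (of the first Baire category) in $X$. Indeed, granting this, if $B_n = X$ for some $n$ then $\bigcup_n B_n = X$ and the set is empty; while if $B_n \subsetneq X$ for every $n$, then each $B_n$ is meager, so $\bigcup_n B_n$ is a countable union of meager sets, hence meager, and the set, being the complement of a meager set in the complete (thus Baire) metric space $X$, is residual.

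To obtain the dichotomy I would exhibit $B_n$ as an \emph{operator range}. Endow $B_n$ with the norm $\|f\|_n := \|f\|_{\varphi,w} + \|f\|_{\varphi_n,w}$ and check that $(B_n,\|\cdot\|_n)$ is a Banach space. Let $(f_k)_{k=1}^\infty$ be $\|\cdot\|_n$-Cauchy; then it is Cauchy both in $\Lambda_{\varphi,w}$ and in $\Lambda_{\varphi_n,w}$, so by completeness $f_k \to g$ in $\Lambda_{\varphi,w}$ and $f_k \to h$ in $\Lambda_{\varphi_n,w}$ for some $g \in \Lambda_{\varphi,w}$ and $h \in \Lambda_{\varphi_n,w}$. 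Since both spaces are Banach function spaces, and hence PCS-spaces, passing to a common subsequence yields $f_{k_j} \to g$ and $f_{k_j} \to h$ pointwise a.e., whence $g = h$ a.e. As $X = (\Lambda_{\varphi,w})_a$ is closed in $\Lambda_{\varphi,w}$, the limit $g$ lies in $X$, so $g = h \in X \cap \Lambda_{\varphi_n,w} = B_n$ and $f_k \to g$ in $\|\cdot\|_n$. This proves completeness.

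Because $\|f\|_{\varphi,w} \le \|f\|_n$, the inclusion $\iota_n \colon (B_n,\|\cdot\|_n) \to (X,\|\cdot\|_{\varphi,w})$ is a bounded linear operator between Banach spaces with range $\iota_n(B_n) = B_n$. Now I would invoke the standard strengthening of the open mapping theorem (the same Baire-category principle underlying the operator-range results of \cite{KT}): if the range of a bounded operator between Banach spaces is non-meager, then the operator is surjective. Consequently $B_n$ is either meager in $X$ or all of $X$, which is exactly the required dichotomy and completes the proof.

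I expect the completeness of $(B_n,\|\cdot\|_n)$ to be the only delicate point: it is where the PCS property of the two Banach function norms is genuinely needed, to force the $\Lambda_{\varphi,w}$-limit and the $\Lambda_{\varphi_n,w}$-limit of an $\|\cdot\|_n$-Cauchy sequence to agree a.e., and where the closedness of the order-continuous subspace guarantees the common limit returns to $B_n$. I would remark that the hypotheses that each $\varphi_n$ satisfy $\Delta_\varphi(0)$ or $\Delta_\varphi(\infty)$ play no role in the residuality argument itself; they serve only to determine which alternative of the dichotomy occurs, since Theorems~\ref{th:OLspaceable1}, \ref{th:OLspaceable2} and~\ref{co:OLspaceablemix} already guarantee that the set is nonempty (indeed $\mathfrak c$-spaceable) under them, so that the residual case is the one realized.
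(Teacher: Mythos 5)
Your proof is correct, and it takes a genuinely different route from the paper's. The paper also argues by Baire category, but it does so by writing $\bigcup_n\bigl((\Lambda_{\varphi,w})_a\cap\Lambda_{\varphi_n,w}\bigr)$ as the countable union of the sets $A_{n,k}=\{f\in(\Lambda_{\varphi,w})_a:\|f\|_{\varphi_n,w}\le k\}$ and then verifying directly that each $A_{n,k}$ is closed and nowhere dense: nowhere density because $A_{n,k}$ sits inside the proper subspace $(\Lambda_{\varphi,w})_a\cap\Lambda_{\varphi_n,w}$ once the set in question is nonempty, and closedness via a hands-on computation (norm convergence implies convergence in measure, hence a.e.\ convergence of the decreasing rearrangements, and then Fatou's lemma applied to the modular $\rho_{\varphi_n,w}$). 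You replace that entire computation by the operator-range dichotomy: you equip $B_n=(\Lambda_{\varphi,w})_a\cap\Lambda_{\varphi_n,w}$ with the sum norm, prove completeness using the PCS property of both Luxemburg norms together with the closedness of the order-continuous subspace, and invoke the meager-or-onto form of the open mapping theorem for the inclusion $\iota_n$. Your completeness check plays exactly the role of the paper's closedness check for $A_{n,k}$, but is softer; what your approach buys is modularity and the explicit observation that the $\Delta_\varphi(0)$/$\Delta_\varphi(\infty)$ hypotheses are irrelevant to the dichotomy itself (which is also implicitly true of the paper's argument, since there properness of each $(\Lambda_{\varphi,w})_a\cap\Lambda_{\varphi_n,w}$ already follows from the assumed nonemptiness). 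One small caveat on your closing remark: for $\gamma<\infty$ the nonemptiness of the set is only guaranteed by Theorem~\ref{th:OLspaceable1} when every $\varphi_n$ satisfies the $\Delta_\varphi(\infty)$-condition (Theorems~\ref{th:OLspaceable2} and~\ref{co:OLspaceablemix} require $I=[0,\infty)$), so under the hypotheses of Theorem~\ref{th:OLresidual} the empty alternative can in principle occur; this does not affect your proof of the stated dichotomy.
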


\begin{proof}
Let $A_{n,k} = \{f \in (\Lambda_{\varphi,w})_a : \|f\|_{\varphi_n,w} \leq k\}$. Then, we derive that
		\begin{eqnarray*}
		(\Lambda_{\varphi,w})_a \setminus \bigcup_{n=1}^{\infty}\Lambda_{\varphi_n,w} &=& (\Lambda_{\varphi,w})_a \setminus \bigcup_{n=1}^{\infty}((\Lambda_{\varphi,w})_a \cap \Lambda_{\varphi_n,w})\\
		&=& (\Lambda_{\varphi,w})_a \setminus \bigcup_{n,k=1}^{\infty}\{f \in (\Lambda_{\varphi,w})_a : \|f\|_{\varphi_n,w} \leq k\}\\
		&=& \bigcap_{n,k = 1}^{\infty}((\Lambda_{\varphi,w})_a \setminus A_{n,k}).
		\end{eqnarray*}
		
Now, assume that $(\Lambda_{\varphi,w})_a \setminus \bigcup_{n=1}^{\infty}\Lambda_{\varphi_n,w} \neq \varnothing$. Then the set $(\Lambda_{\varphi,w})_a \cap \Lambda_{\varphi_n,w}$ is a proper vector subspace of $(\Lambda_{\varphi,w})_a$ because we can always find a function $f \in (\Lambda_{\varphi,w})_a$ such that $f \notin \Lambda_{\varphi_n,w}$ from the proofs of Theorems~\ref{th:OLspaceable1} and~\ref{th:OLspaceable2}, depending on $\varphi$ satisfying either $\Delta_{\varphi}(\infty)$- or $\Delta_{\varphi}(0)$-condition. Hence \,$\text{int}\,((\Lambda_{\varphi,w})_a \cap \Lambda_{\varphi_n,w}) = \varnothing$ \,and, since
\,$A_{n,k} \subset (\Lambda_{\varphi,w})_a \cap \Lambda_{\varphi_n,w}$, we obtain that $\text{int}(A_{n,k}) = \varnothing$.
		
Moreover, the set $A_{n,k}$ is closed. Indeed, choose a sequence of functions $(f_i)_{i=1}^{\infty} \subset (A_{n,k})$ that converges to $f \in (\Lambda_{\varphi,w})_a$. Then there exists a sequence \,$(k_i)_{i=1}^\infty$ \,of positive real numbers such that \,$k_{i} \uparrow \infty$, $k_{i} \geq \frac{1}{\|f_i - f\|_{\varphi,w}}$ \,and
$\int_I \varphi(k_{i}(f_i - f)^*)w \leq 1$ \,for all \,$i \in \N$. Then for $B \subset I$ such that $m(B) < \infty$ and for every $\var > 0$, we have
		\small
		\begin{eqnarray*}
			W(m(\{t \in B : |f_i(t) - f(t)| \geq \var\})) &\leq& W(m(\{t \in (0, m(B)]: (f_i(t) - f(t))^* \geq \var\}))\\
			&=& W\left(m\left(\left\{t \in (0, m(B)]: \frac{\varphi(k_{i}(f_i(t) - f(t))^*)}{\varphi(k_{i}\var)} \geq 1 \right\}\right)\right)\\
			&\leq& \frac{1}{\varphi(k_{i}\var)}\int_0^{m(B)} \varphi(k_{i}(f_i(t) - f(t))^*)w\\
			&\leq& \frac{1}{\varphi(k_{i}\var)}.
		\end{eqnarray*}
		\normalsize
		
Since $k_{i} \uparrow \infty$, we see that $W(m(\{t \in B : |f_i(t) - f(t)| \geq \var\})) \rightarrow 0$ as $i \to \infty$, and so
$m(\{t \in B : |f_i(t) - f(t)| \geq \var\}) \rightarrow 0$ by our assumption on the weight function. This implies that the sequence is actually convergent in measure on the set $B$. Furthermore, by using the $\sigma$-finiteness of the Lebesgue measure, we can show that the sequence is convergent in measure for $I$. In view of \cite[II.2.$11^{\circ}$, p.~67]{KPS}, this implies that $f_i^* \rightarrow f^*$ a.e. By Fatou's Lemma, we see that
		\[
		\int_I \varphi_n\left(\frac{f^*}{k}\right)w = \int_I \varphi_n\left(\frac{\liminf_{i\rightarrow \infty}f_i^*}{k}\right)w \leq \liminf_{i\rightarrow \infty}\int_I \varphi_n\left(\frac{f_i^*}{k}\right)w \leq 1.
		\]
Hence $f \in A_{n,k}$, which shows that $A_{n,k}$ is closed.

This consequently shows that the set $S := (\Lambda_{\varphi,w})_a \setminus \bigcup_{n=1}^{\infty}\Lambda_{\varphi_n,w}$ is a countable intersection of dense open subsets of $(\Lambda_{\varphi,w})_a$. Since \,$(\Lambda_{\varphi,w})_a$ \,is a complete metric space, Baire's theorem tell us that the set \,$S$ \,is residual in $(\Lambda_{\varphi,w})_a$.
\end{proof}

\section{Final remarks and open problems}

\begin{enumerate}
\item[\bf 1.] 
As immediate consequences of Theorems \ref{th:OLspaceable1}, \ref{th:OLspaceable2}, \ref{Thm isomorphic copy}, \ref{thm:OLspaceabilityspan2}, and \ref{th:OLresidual}, and Corollaries \ref{co:OLspaceable1}, \ref{co:OLspaceable2}, \ref{co:OLspaceablemix}, \ref{co:OLspaceable3}, and \ref{cor:OLspaceabilityspan2}, we obtain the analogous statements for the Orlicz spaces $L_{\varphi}(I)$ when $w \equiv 1$. In fact, the more general results for Theorems \ref{th:OLspaceable1}, \ref{th:OLspaceable2}, \ref{Thm isomorphic copy}, and \ref{th:OLresidual} with respect to an abstract measure space with a positive measure are shown in \cite{AM}.



\item[\bf 2.] Regarding Corollary \ref{corollary Lambda p w strict}, note that the subset $\Lambda_{p,w}^{\text{r-strict}}$ 
can be observed in the context of the generalized Lebesgue class, which is known to be a special case of Calder\'on-Lozanovskii spaces. For the interested reader, we refer to \cite{BIM}.
	
\item[\bf 3.] From the materials in Section 3.2.1, we retrieve the results in \cite{Ast} and \cite{RS} if we let $\varphi(u) = u$.

\item[\bf 4.] From the results in Section 3.2, the following problem arises naturally: What is the characterization of inclusion operators between Orlicz-Lorentz spaces to be disjointly strictly singular when the weight function $w$ is fixed? For Orlicz spaces, such characterization has been given in \cite{HR}. However, the proof for Orlicz spaces does not transfer to Orlicz-Lorentz spaces plainly because of technicalities arising from the weight function $w$ and the decreasing rearrangement.


\item[\bf 5.] Using Theorem \ref{Thm Orl-Lor inclusion} and the problem posed at the preceding item, can we make claims about spaceability
or vacuousness of certain subsets of Orlicz-Lorentz spaces similar to the corresponding one given in \cite[Theorem 5.1 and Proposition 5.3]{RS}?

\item[\bf 6.] In the recent paper \cite{BIM}, it is proved (Corollary 3.1) that if \,$X$ \,is a Banach function space, there is a diffuse set (see \cite[Definition 1]{BIM}) \,$E \in \Sigma$ \,with \,$\chi_E \in X$ \,and \,$\mu (E) > 0$, and \,$\varphi , \psi$ \,are Orlicz functions with \,$\varphi \not\prec \psi$, then
    \,$X^\psi \setminus X^\varphi$ \,is spaceable in \,$X^\psi$. Here \,$X^\Phi$ \,denotes the so-called generalized Orlicz class associated to \,$\Phi$ \,and \,$X$
    \,(see, e.g., \cite{Persson}), and Lebesgue, Orlicz and Orlicz-Lorentz spaces are special instances of it, for appropriate pairs \,$(X, \Phi)$. It would be interesting to study to what extent Theorems \ref{th:OLspaceable1}, \ref{th:OLspaceable2}, \ref{thm:OLspaceabilityspan2}, \ref{Thm isomorphic copy} and \ref{th:OLresidual}, and Corollaries \ref{co:OLspaceable1}, \ref{co:OLspaceable2}, \ref{co:OLspaceablemix}, \ref{co:OLspaceable3} and \ref{cor:OLspaceabilityspan2}, hold in this more general context.

\end{enumerate}

\begin{bibdiv}
	\begin{biblist}

\bib{AM}{article}{
	author={Akbarbaglu, I.},
	author={Maghsoudi, S.},
	title={Large structures in certain subsets of Orlicz spaces},
	journal={Linear Algebra Appl.},
	volume={438},
	date={2013},
	number={11},
	pages={4363--4373},
	doi={10.1016/j.laa.2013.01.038},
}

\bib{AB}{article}{
	author={Ara\'{u}jo, G.},
	author={Barbosa, A.},
	title={A general lineability criterion for complements of vector spaces},
	journal={Rev. Real Acad. Cienc. Exactas Fis. Nat. Ser. A-Mat.},
	volume={118},
	date={2024},
	number={5},
	doi={10.1007/s13398-023-01505-8},
}

\bib{AB2}{article}{
	author={Ara\'{u}jo, G.},
	author={Barbosa, A.},
	title={On the set of functions that vanish at infinite and have a unique maximum},
	journal={preprint (2023), arXiv:2312.05004},
}

\bib{ABRR2}{article}{
	author={Ara\'{u}jo, G.},
	author={Barbosa, A.},
	author={Raposo, A., Jr.},
	author={Ribeiro, G.},
	title={$(\alpha,\beta)$-spaceability and applications},
	journal={preprint (2023), arXiv:2306.01561},
}

\bib{ABRR}{article}{
	author={Ara\'{u}jo, G.},
	author={Barbosa, A.},
	author={Raposo, A., Jr.},
	author={Ribeiro, G.},
	title={On the spaceability of the set of functions in the Lebesgue space
		$L_p$ which are not in $L_q$},
	journal={Bull. Braz. Math. Soc. (N.S.)},
	volume={54},
	date={2023},
	number={3},
	pages={Paper No. 44, 9},
	doi={10.1007/s00574-023-00360-w},
}

\bib{Studia2017}{article}{
	author={Ara\'{u}jo, G.},
	author={Bernal-Gonz\'{a}lez, L.},
	author={Mu\~{n}oz-Fern\'{a}ndez, G. A.},
	author={Prado-Bassas, J. A.},
	author={Seoane-Sep\'{u}lveda, J. B.},
	title={Lineability in sequence and function spaces},
	journal={Studia Math.},
	volume={237},
	date={2017},
	number={2},
	pages={119--136},
	doi={10.4064/sm8358-10-2016},
}


\bib{ABPS}{book}{
	author={Aron, Richard M.},
	author={Bernal-Gonz\'{a}lez, Luis},
	author={Pellegrino, Daniel M.},
	author={Seoane Sep\'{u}lveda, Juan B.},
	title={Lineability: the search for linearity in mathematics},
	series={Monographs and Research Notes in Mathematics},
	publisher={CRC Press, Boca Raton, FL},
	date={2016},
	pages={xix+308},
}

\bib{AGS}{article}{
	author={Aron, Richard},
	author={Gurariy, V. I.},
	author={Seoane, J. B.},
	title={Lineability and spaceability of sets of functions on $\Bbb R$},
	journal={Proc. Amer. Math. Soc.},
	volume={133},
	date={2005},
	number={3},
	pages={795--803},
	doi={10.1090/S0002-9939-04-07533-1},
}

%

\bib{Ast}{article}{
	author={Astashkin, S.V.},
	title={Disjoint strict singularity of embeddings of symmetric spaces},
	journal={Mat. Zemetki (in Russian); translation in Math.~Notes},
	volume={65},
	date={1999},
	pages={3--12},
	doi={10.1007/BF02675003},
}	
	


%

\bib{BS}{book}{
	author={Bennett, Colin},
	author={Sharpley, Robert},
	title={Interpolation of operators},
	series={Pure and Applied Mathematics},
	volume={129},
	publisher={Academic Press, Inc., Boston, MA},
	date={1988},
	pages={xiv+469},
}

%

\bib{TAMS2020}{article}{
	author={Bernal-Gonz\'{a}lez, L.},
	author={Cabana-M\'{e}ndez, H. J.},
	author={Mu\~{n}oz-Fern\'{a}ndez, G. A.},
	author={Seoane-Sep\'{u}lveda, J. B.},
	title={On the dimension of subspaces of continuous functions attaining their maximum finitely many times},
	journal={Trans. Amer. Math. Soc.},
	volume={373},
	date={2020},
	number={5},
	pages={3063--3083},
	doi={10.1090/tran/8054},
}

\bib{BO}{article}{
	author={Bernal-Gonz\'{a}lez, Luis},
	author={Ord\'{o}\~{n}ez Cabrera, Manuel},
	title={Spaceability of strict order integrability},
	journal={J. Math. Anal. Appl.},
	volume={385},
	date={2012},
	number={1},
	pages={303--309},
	doi={10.1016/j.jmaa.2011.06.043},
}


\bib{BO2}{article}{
	author={Bernal-Gonz\'{a}lez, Luis},
	author={Ord\'{o}\~{n}ez Cabrera, Manuel},
	title={Lineability criteria, with applications},
	journal={J. Funct. Anal.},
	volume={266},
	date={2014},
	number={6},
	pages={3997--4025},
	doi={10.1016/j.jfa.2013.11.014},
}
%
%

\bib{BAMS2014}{article}{
	author={Bernal-Gonz\'{a}lez, Luis},
	author={Pellegrino, Daniel},
	author={Seoane-Sep\'{u}lveda, Juan B.},
	title={Linear subsets of nonlinear sets in topological vector spaces},
	journal={Bull. Amer. Math. Soc. (N.S.)},
	volume={51},
	date={2014},
	number={1},
	pages={71--130},
	doi={10.1090/S0273-0979-2013-01421-6},
}
%

\bib{BFPS}{article}{
	author={Botelho, G.},
	author={F\'{a}varo, V. V.},
	author={Pellegrino, D.},
	author={Seoane-Sep\'{u}lveda, J. B.},
	title={$L_p[0,1]\setminus \bigcup_{q>p}L_q[0,1]$ is spaceable for every $p>0$},
	journal={Linear Algebra Appl.},
	volume={436},
	date={2012},
	number={9},
	pages={2963--2965},
	doi={10.1016/j.laa.2011.12.028},
}


\bib{CGP}{article}{
	author={Calder\'on-Moreno, M.C.},
	author={Gerlach, P.},
	author={Prado-Bassas, J.A.},
	title={Infinite pointwise lineability: general criteria and applications},
	journal={Rev. Real Acad. Cienc. Exactas Fis. Nat. Ser. A-Mat.},
	volume={118:25},
	date={2024},
	pages={13 pages},
	doi={10.1007/s13398-023-01525-4},
}

\bib{C}{article}{
	author={Chen, Shutao},
	title={Geometry of Orlicz spaces},
	note={With a preface by Julian Musielak},
	journal={Dissertationes Math. (Rozprawy Mat.)},
	volume={356},
	date={1996},
	pages={204},
	issn={0012-3862},
}
%
%

\bib{BAMS2019}{article}{
	author={Ciesielski, Krzysztof C.},
	author={Seoane-Sep\'{u}lveda, Juan B.},
	title={Differentiability versus continuity: restriction and extension
		theorems and monstrous examples},
	journal={Bull. Amer. Math. Soc. (N.S.)},
	volume={56},
	date={2019},
	number={2},
	pages={211--260},
	doi={10.1090/bull/1635},
}

\bib{FPRR}{article}{
	author={F\'{a}varo, Vin\'{\i}cius V.},
	author={Pellegrino, Daniel},
	author={Raposo, Anselmo, Jr.},
	author={Ribeiro, Geivison},
	title={General criteria for a stronger notion of lineability},
	journal={Proc. Amer. Math. Soc.},
	volume={152},
	date={2024},
	number={3},
	pages={941--954},
	doi={10.1090/proc/16608},
}

\bib{FPT}{article}{
	author={F\'{a}varo, V. V.},
	author={Pellegrino, D.},
	author={Tomaz, D.},
	title={Lineability and spaceability: a new approach},
	journal={Bull. Braz. Math. Soc. (N.S.)},
	volume={51},
	date={2020},
	number={1},
	pages={27--46},
	doi={10.1007/s00574-019-00142-3},
}

\bib{TAMS2014}{article}{
	author={Enflo, Per H.},
	author={Gurariy, Vladimir I.},
	author={Seoane-Sep\'{u}lveda, Juan B.},
	title={Some results and open questions on spaceability in function spaces},
	journal={Trans. Amer. Math. Soc.},
	volume={366},
	date={2014},
	number={2},
	pages={611--625},
	doi={10.1090/S0002-9947-2013-05747-9},
}

%

\bib{HR}{article}{
	author={Hern\'{a}ndez, Francisco L.},
	author={Rodr\'{\i}guez-Salinas, Baltasar},
	title={On $l^p$-complemented copies in Orlicz spaces. II},
	journal={Israel J. Math.},
	volume={68},
	date={1989},
	number={1},
	pages={27--55},
	doi={10.1007/BF02764967},
}


\bib{HNVW}{book}{
	author={Hyt\"{o}nen, Tuomas},
	author={van Neerven, Jan},
	author={Veraar, Mark},
	author={Weis, Lutz},
	title={Analysis in Banach spaces. Vol. I. Martingales and
		Littlewood-Paley theory},
	series={Ergebnisse der Mathematik und ihrer Grenzgebiete. 3. Folge. A
		Series of Modern Surveys in Mathematics [Results in Mathematics and
		Related Areas. 3rd Series. A Series of Modern Surveys in Mathematics]},
	volume={63},
	publisher={Springer, Cham},
	date={2016},
	pages={xvi+614},
}


\bib{K}{article}{
	author={Kami\'{n}ska, Anna},
	title={Some remarks on Orlicz-Lorentz spaces},
	journal={Math. Nachr.},
	volume={147},
	date={1990},
	pages={29--38},
	doi={10.1002/mana.19901470104},
}


\bib{KPS}{book}{
	author={Kre\u{\i}n, S. G.},
	author={Petun\={\i}n, Yu. \={I}.},
	author={Sem\"{e}nov, E. M.},
	title={Interpolation of linear operators},
	series={Translations of Mathematical Monographs},
	volume={54},
	note={Translated from the Russian by J. Sz\H{u}cs},
	publisher={American Mathematical Society, Providence, RI},
	date={1982},
	pages={xii+375},
}

\bib{KT}{article}{
	author={Kitson, Derek},
	author={Timoney, Richard M.},
	title={Operator ranges and spaceability},
	journal={J. Math. Anal. Appl.},
	volume={378},
	date={2011},
	number={2},
	pages={680--686},
	doi={10.1016/j.jmaa.2010.12.061},
}


\bib{LT2}{book}{
	author={Lindenstrauss, Joram},
	author={Tzafriri, Lior},
	title={Classical Banach spaces. II},
	series={Ergebnisse der Mathematik und ihrer Grenzgebiete [Results in
		Mathematics and Related Areas]},
	volume={97},
	note={Function spaces},
	publisher={Springer-Verlag, Berlin-New York},
	date={1979},
	pages={x+243},
}



\bib{Mast}{article}{
	author={Masty\l o, Mieczys\l aw},
	title={Interpolation of linear operators in Calder\'{o}n-Lozanovski\u{\i} spaces},
	journal={Comment. Math. Prace Mat.},
	volume={26},
	date={1986},
	number={2},
	pages={247--256},
}

\bib{Nielsen}{book}{
	author={Nielsen, Ole A.},
	title={An introduction to integration and measure theory},
	series={Canadian Mathematical Society Series of Monographs and Advanced
		Texts},
	note={A Wiley-Interscience Publication},
	publisher={John Wiley \& Sons, Inc., New York},
	date={1997},
	pages={xvi+473},
	isbn={0-471-59518-7},
	review={\MR{1468232}},
}


\bib{Persson}{book}{
	author={Persson, L.E.},
	title={On some generalized Orlicz classes and spaces},
	series={Research Report 1988-3, Department of Mathematics, Lulea University of Technology},
	date={1988},
}

\bib{PR}{article}{
	author={Pellegrino, Daniel},
	author={Raposo, Anselmo, Jr.},
	title={Pointwise lineability in sequence spaces},
	journal={Indag. Math. (N.S.)},
	volume={32},
	date={2021},
	number={2},
	pages={536--546},
	doi={10.1016/j.indag.2020.12.006},
}

%

\bib{RR}{book}{
	author={Rao, M. M.},
	author={Ren, Z. D.},
	title={Theory of Orlicz spaces},
	series={Monographs and Textbooks in Pure and Applied Mathematics},
	volume={146},
	publisher={Marcel Dekker, Inc., New York},
	date={1991},
	pages={xii+449},
}

	
\bib{RS}{article}{
	author={Ruiz, C\'{e}sar},
	author={S\'{a}nchez, V\'{\i}ctor M.},
	title={Nonlinear subsets of function spaces and spaceability},
	journal={Linear Algebra Appl.},
	volume={463},
	date={2014},
	pages={56--67},
	doi={10.1016/j.laa.2014.09.005},
}

\bib{BIM}{article}{
	author={Salec, AliReza Bagheri},
	author={Ivkovi\'{c}, Stefan},
	author={Tabatabaie, Seyyed Mohammad},
	title={Spaceability on some classes of Banach spaces},
	journal={Math. Inequal. Appl.},
	volume={25},
	date={2022},
	number={3},
	pages={659--672},
	doi={10.7153/mia-2022-25-41},
}

\bib{Tesis}{book}{
	author={Seoane-Sep\'{u}lveda, Juan B.},
	title={Chaos and lineability of pathological phenomena in analysis},
	note={Thesis (Ph.D.)--Kent State University},
	publisher={ProQuest LLC, Ann Arbor, MI},
	date={2006},
	pages={139},
	isbn={978-0542-78798-0},
}

%
\end{biblist}
\end{bibdiv}

\end{document}